\documentclass{article}
\usepackage[a4paper]{geometry}
\usepackage[utf8]{inputenc}

\usepackage[colorlinks]{hyperref}
\usepackage[singlelinecheck=false]{caption}
\usepackage{booktabs}

\usepackage{graphicx}
\usepackage{latexsym}
\usepackage{comment}

\usepackage{lipsum}
\usepackage{graphicx}
\usepackage{authblk}
\usepackage[english]{babel}
\usepackage{color}
\usepackage{hyperref}
\hypersetup{
    colorlinks=true,
    linkcolor=blue,
    filecolor=blue,      
    urlcolor=blue,
    citecolor=blue,
    }
\usepackage{dsfont}
\usepackage{geometry}
\usepackage{float}
\usepackage{bbm}
\usepackage{enumerate}

\usepackage[nottoc]{tocbibind}
 \usepackage{indentfirst}

\graphicspath{{./figure/}}

\usepackage{pgfplots}
\usepackage{comment}
\usepackage{fontenc}
\usepackage{booktabs}
\usepackage{tabularx}
\usepackage{tikz}
\usepackage{caption}
\usepackage{subcaption}
\usepackage{comment}
\usepackage{amsmath}
\DeclareCaptionLabelFormat{custom}
{%
      \textbf{Figure #2}
}
\DeclareCaptionLabelSeparator{custom}{ }
\DeclareCaptionFormat{custom}
{%
\centering
    #1 #2 #3 
}
 \usepackage{tikz-cd}

\usepackage{amsmath,amsfonts,amssymb,amsthm}

\numberwithin{equation}{section}
\usepackage{etoolbox}
\apptocmd{\thebibliography}{}{}{}
\theoremstyle{plain}
\newtheorem{theorem}{Theorem}[section]
\newtheorem{lemma}[theorem]{Lemma}
\newtheorem{corollary}[theorem]{Corollary}
\newtheorem{proposition}[theorem]{Proposition}

\newtheorem{reminder*}{[theorem]Reminder}

\newtheorem{details*}[theorem]{Details}

\newtheorem{comm*}{Comment}
\newtheorem{example}[theorem]{Example}
\newtheorem{definition}[theorem]{Definition} 
\newtheorem{definition*}{[theorem]Definition}

\newtheorem{notation*}{Notation}

\newtheorem{remark}[theorem]{Remark}

\title{Flux solutions for stochastic chemical systems with sources and sinks}
  \author{E. Franco, J. J. L. Velázquez}

\begin{document}

\maketitle

\begin{abstract}
    In this paper we study a class of stochastic chemical systems that,  in general,  do not satisfy the property of detailed balance nor the property of complex balance.
    These systems are obtained by adding sources and sinks to conservative chemical systems. This procedure is a way to define rigorously stochastic chemical systems in contact with reservoirs. 
    We prove that these systems are non-explosive Markov chains and we prove that they converge to a steady state as time tends to infinity. The stationary solution are out of equilibrium solutions. 
    We conclude the paper by applying our results in order to describe fluxes of molecules through some membrane channels. 
\end{abstract}

\tableofcontents

\section{Introduction} 
Chemical reaction networks can be modelled either by systems of ODEs that describe the evolution in time of the concentration of chemicals in the system or as continuous-time Markov chains (with a discrete and finite or infinite state space). The state of the Markov chain at a given time describes the number of chemicals of some given species that are present in the system. 
The deterministic models, formulated as a system of ODEs, are used to model chemical systems in which the number of molecules of each species is large. Stochastic models are  the natural way to describe chemical systems where few molecules of all, or some of the chemicals are present in the system (see for instance \cite{anderson2011continuous,anderson2015stochastic,kurtz1978strong}).

A fundamental property of systems of ODEs modelling chemical networks is the so-called property of detailed balance. 
This property holds when the system admits a positive steady state and at the steady state every chemical reaction is balanced by the corresponding reverse reaction.    
An important consequence of the property of detailed balance is the fact that when it holds, then the steady states have an explicit form, i.e. they can be written as exponentials of the Gibbs free energy. 
Moreover, in the deterministic setting the fact that detailed balance holds can be used to define a Lyapunov functional that allows to study the stability of the steady states.
This Lyapunov functional decreases along solutions. Specifically, this functional is  the free energy, that decreases in closed isothermal chemical systems.

Similarly, a continuous-time Markov chain that models a chemical system satisfies the property of detailed balance when it is reversible, i.e. when there is a stationary distribution at which the chemical reactions are individually balanced. We refer to \cite{anderson2010product,liggett1985interacting} and to Section \ref{sec:lack of detailed balance when sources and sinks} for more details on this. 

The detailed balance property has important mathematical consequences and an important physical meaning. Indeed, the physical principle of microscopic reversibility states that a chemical system that is at constant temperature and that does not exchange energy and chemicals with the environment must satisfy the property of detailed balance (see for instance \cite{avanzini2024methods,avanzini2025coarse,boyd1974detailed,polettini2014irreversible}). 
However biochemical systems often exchange energy and chemicals with the surroundings. As a consequence, they are usually modelled by systems of equations that do not satisfy the detailed balance property or by continuous-time Markov chains that are non-reversible. 

In \cite{franco2025detailed} we proved that a way to obtain chemical systems in which the detailed balance fails is to freeze at constant values the concentration of some of the chemicals in a system that satisfies the property of detailed balance. More precisely, suppose that a given system of ODEs, modelling a chemical network satisfies the detailed balance property.
Assume now that  the concentration of some of the chemicals in the system does not evolve according to the dynamics induced by the system of ODEs, but are instead frozen at constant values, for instance because the system is in contact with a reservoirs of chemicals. 
If the reduced chemical system of ODEs for the non-frozen concentrations has more cycles than the original system which satisfies the detailed balance property, then this reduced chemical system satisfies the detailed balance property if and only if the frozen concentrations are chosen at equilibrium values, i.e. are fine tuned. The onset of cycles induced by the freezing of some concentrations of chemicals in a network is a known phenomena, indeed these cycles are called emergent cycles in the physical literature \cite{polettini2014irreversible}. 
Notice that the fact that the detailed balance property imposes conditions on the chemical rates of reactions that belong to the cycles is also well known (see \cite{wegscheider1902simultane}). 

One of the question that we address in this paper is how to model a stochastic chemical system that is in contact with a reservoir of chemicals. In order to do that we add sources and sinks to a stochastic chemical system that satisfies the detailed balance property and that is conservative. 
Adding sources and sinks to a chemical system can introduce new cycles and hence can break the property of detailed balance. 
Notice that to add sources and sinks to a stochastic chemical system is not the same as assuming that the chemical system is in contact with reservoirs with a given chemical potential. 
This, indeed, would implicitly assume that the probability  distribution can be approximated by a macrocanonical distribution.

From the mathematical point of view, the analysis of chemical systems that do not satisfy the property of detailed balance is challenging as few techniques are available to study their dynamics.
One of the difficulties  in the analysis of systems of ODEs that model a chemical network that does not satisfy the property of detailed balance is to prove that the system admits a unique solution that is valid for all times. Due to the non-linearity of the chemical reactions it is possible to have systems of ODEs, modelling chemical systems, whose solution escape to infinity in finite time. The fact that solutions to a system of ODEs modelling chemical systems and that satisfy the property of detailed balance or the so called property of complex balance do not escape to infinity in finite time has also been proven in \cite{feinberg2019foundations}. 
The existence of time dependent solutions that are bounded in finite times has been proved in \cite{anderson2011boundedness} also for 
weakly reversible networks with one linkage class.

In the stochastic setting we can have a similar situation, namely the Markov chain can be explosive, i.e. an infinite number of jumps can occur in finite time.
In order to exclude this pathological behaviour, assumptions on the chemical reaction rates are needed. This problem has been analysed for chemical system that satisfy the so-called complex balance property. 
This property is a generalization of the property of detailed balance. 
It has been proven in \cite{anderson2018non} that continuous-time 
Markov chains that model chemical systems that satisfy the property of complex balance are non-explosive, i.e. the probability of having a finite number of molecules is equal to $1$ for all times.

Another aspect that is relevant is the analysis of the long-time dynamics of chemical systems. 
The existence and  the stability of stationary solutions to systems of ODEs modelling chemical systems that have deficiency zero has been analysed by several authors (see for instance \cite{feinberg1972complex,feinberg2019foundations,horn1972general}). 
These results have been extended also to the stochastic setting in which the chemical system is  modelled by a continuous-time Markov process
(see for instance \cite{anderson2010product,cappelletti2016product}). 

In this paper we study  a class of continuous-time Markov chains that model chemical systems. 
We consider a specific class of stochastic chemical systems that do not satisfy neither the property of detailed balance or the property of complex balance.
The class of systems that we study in this paper are chemical systems that exchange chemicals with the environment only via sources and sinks of chemicals, i.e. reactions of the form 
\[ 
\emptyset \quad \rightleftarrows \quad (k), \quad  \text{ where } k \in \Omega
\]
where $ \Omega  $ is the finite set of the species in the network. 
The chemical reactions that are not sources and sinks are assumed to be conservative (we refer to Definition \ref{def:conservative} for the precise definition of conservative reaction).
We call this class of chemical networks, obtained by adding sources and sinks to conservative chemical systems, \textit{conservative systems with sources and sinks}. 
In particular we assume that the sources and sinks satisfy the mass action law, however it would be possible to model sources and sinks in a more complicated way, than the one considered in this paper. For instance assuming that the rate at which molecules are injected in the system depends on the concentration  of some  molecules present in the system.

Notice that adding sources and sinks to a stochastic chemical system is a precise mathematical way to define a chemostat (or a reservoir of chemicals) for stochastic chemical systems. We refer to \cite{rao2018conservation,remlein2025chemostat} for related results in the physical literature. 
The first result that we prove in this paper is that if we add sources and sinks to a conservative stochastic chemical system that satisfies the detailed balance property, then we obtain a system that does not satisfy the detailed balance property,  unless either the chemical rates are chosen in a very specific manner, or the sources and sinks and the chemical reactions satisfy a particular topological property. 
We stress that to add sources and sinks to a  stochastic chemical system with conservative chemical reactions that satisfy the detailed balance condition is the natural way to model open stochastic chemical systems. 

We then introduce the master equation associated with the class of continuous-time Markov chains that model conservative chemical systems with sources and sinks
\begin{align} \label{eq:master with sources intro}
    \partial_t f (t, n) &= \sum_{ \rho } [R_\rho(n - S_\rho ) f(t,n-S_\rho ) + R_{-\rho} ( n + S_\rho ) f(t, n+ S_\rho ) ] \\
    & -f(t, n)  \sum_{ \rho }(R_\rho (n) + R_{- \rho} (n) ) + \sum_{ k  } [ A_k(n - e_k )   f(t,  n - e_k ) + B_k(n+e_k) f(t, n+e_k) ] \nonumber \\
    &-  \sum_{ k} [ A_k(n)  + B_k(n)  ] f( t, n  ). \nonumber 
\end{align}
Here $ R_\rho (n) $ is the rate at which the chemical reaction $ \rho $ takes place if the state of the system is $n \in \mathbb N^{ |\Omega | } $. The stoichiometric vector associated with the chemical reaction $ \rho $ is the vector $ S_\rho \in \mathbb Z^{ |\Omega |} $. Similarly,  $ R_{ - \rho} (n) $ is the rate at which the reverse of the chemical reaction $ \rho $ occurs. The reverse of the chemical reaction $S_\rho$ is the chemical reaction associated with stoichiometric vector $ - S_\rho$. 
Similarly, $A_k (n) $ is the rate at which chemicals of type $k \in \Omega  $ are injected in the system by  the sources and $B_k (n) $ is the rate at which chemicals are removed by the sink. The stoichiometric vector corresponding to the source is $ e_k $ and the one corresponding to the sink is $ -e_k $. 
We assume that the chemical rates of the sources and sinks satisfy the mass action law, the detailed definition of the rates are given in \eqref{mass action source}-\eqref{mass action sink}. 
In this paper we assume that the set of the sinks and sources is finite, as well as the set of chemical reactions.

The solution to the master equation \eqref{eq:master with sources intro} describes the evolution in time of the probability $f(t, n ) $ that the system is at a certain state $ n \in \mathbb N^{|\Omega |} $ at a given time $t\geq 0 $. Here $ \Omega $ is the set of the species in the system and $n =(n_k)_{ k \in \Omega } $ is the vector that encodes the number of molecules of each chemical $k \in \Omega $ in the system. 

In this paper we analyse the solution to equation \eqref{eq:master with sources intro} using a PDE approach. It would be possible to study the same problem using a probabilistic approach and using the theory of Markov processes (see for instance \cite{liggett1985interacting}). Notice however that the Markov chains that we are considering have an infinite state space, hence it is not possible to use the classical theory of finite Markov chains. 

We prove the existence of a time dependent solution $f(t, n ) $ to the master equation that models a conservative chemical system with sources and sinks.
In particular, we prove that the solution is a probability for all times, hence $ \sum_{ n } f(t, n ) =1 $ for every $t \geq 0 $. This means that the corresponding continuous-time Markov chain is non-explosive (see \cite{norris1998markov}). 
This result holds both for mass action and non-mass action kinetics. 
In order to prove this it is crucial to observe that the fact that the chemical reactions are conservative, together with the form of sources and sinks, implies that if the initial condition $f_0 $ is such that $\sum_{n \in \mathbb N^{ |\Omega |}}  |n | f_0( n ) < \infty  $, then there exists a positive constant $C>0$ that is such that 
\begin{equation} \label{bound on mass intro}
\sum_{n \in \mathbb N^{ |\Omega |}} |n | f(t, n ) \leq C ,\quad \forall t \geq 0. 
\end{equation}
Notice that in order to prove the existence of a time dependent solution $f (t, n) $ to the master equation \eqref{eq:master with sources intro} we do not need to assume that the chemical reactions satisfy the detailed balance condition. 
However, even if the result holds also when the detailed balance property fails, we stress that the chemical systems in which we are interested are chemical systems obtained by adding sources and sinks to chemical systems that satisfy the detailed balance condition and that are conservative. These are the open chemical systems with lack of detailed balance that are meaningful from the physical point of view. 

We will prove that when the kinetics of the chemical system are given according to the mass action law then the time dependent solution to the master equation \eqref{eq:master with sources intro} is unique. Moreover, under the mass action assumption, we can prove that the master equation \eqref{eq:master with sources intro} corresponding to a conservative system with sources and sinks has a unique steady state $ \overline f $.
In order to prove the existence of a steady state we use the bound \eqref{bound on mass intro} as well as fixed point arguments. To prove the uniqueness of the stationary solution $ \overline f $ we study the dual to equation \eqref{eq:master with sources intro}. 
In particular, we prove that the solution to the adjoint problem converges to a constant as time tends to infinity. 
We use this to prove the uniqueness of the stationary solution $ \overline f $ as well as the fact that the time dependent solution $f$ converges to the steady state $ \overline f $ as time tends to infinity. 

 In the last part of the paper we consider applications of the results of this paper to two different models of membrane channels (see \cite{alberts2022molecular,catacuzzeno2024building,mackey2013ion}). 
Ion channels connect the interior of a membrane with the exterior of the membrane. They are located across the membrane and, when open, they allow some molecules to pass from one side of the membrane to the other.

The first model of membrane channels that we consider is a model in which the transport of molecules is driven by the difference of concentration of molecules inside and outside the membrane. 
For this model of membrane channels  we assume that only one type of molecule (or a ion)  is present outside and inside the membrane. The channel can be open towards the interior of the membrane or towards the exterior of the membrane. In particular, the transitions between different states of the channel occurs randomly. When the channel is open toward the outside of the membrane the molecules that are outside the membrane can enter the channel and then enter the membrane when the channel opens toward the interior. In a similar manner a molecule can go from the inside of the membrane  to the outside using the channel. We will provide more details on the model in Section \ref{sec:example 1}. 
If the number of molecules outside the membrane is higher in the exterior of the membrane than in the interior, then there is a net transport of molecules from the outside to the inside of the membrane, i.e. the transport of molecules is induced by the gradient of concentrations.  

We model the interaction between the channels and the molecules as chemical reactions that are conservative. Moreover we assume that there is a source of molecules both inside and outside the membrane. This allows to prove the existence of a non-equilibrium stationary solution for this model. 
The stationary solution is out of equilibrium because at steady state there are fluxes of molecules through the channels. 

The second model that we consider is an active transport model of channels in which molecules are  transported against their gradient of concentration using the energy of a gradient of ions. 
Therefore in this second model we assume that two types of chemicals are co-transported through the channels. We assume that the channel can switch from being open towards the inside to being open towards the outside (or vice versa from being open towards the inside to being open towards the outside) only when two different types of molecules are both present in the channel. 
This type of transport through the channel is called co-transport in the biological literature. 
This mechanism is used, for instance, to transport glucose into the cell using the electrochemical gradient of Na$^+$. Indeed the concentration of  Na$^+$ is abundant in the extracellular space while it is low in the cytosol. 
On the contrary, the concentration of glucose is high in the cytosol and low in the extracellular space. 
The channel can switch state only when a ion Na$^+$ and a molecule of glucose are at the same time in the channel. Since the concentration of Na$^+$  is higher in the extracellular space this allows to transport glucose inside the cell against its gradient of concentration (i.e. even if the concentration of glucose is higher inside the cell than outside). 

In this paper we write the master equation of a model of co-transport and we analyse its long-time behaviour. In particular we prove the existence of a unique stationary solution that is attractive. 
We stress that this stationary solution is a non-equilibrium stationary solution that cannot be written in terms of the Gibbs free energy.

\subsection{Notation} 
We introduce now the notation that will be used in the paper.  
We denote with $e_k $ the $k$-th vector of the canonical basis of $\mathbb R^n $. 
We denote with $ \mathbb N $ the set of the non-negative natural numbers including the zero. 
We use the notation $n ! = \prod_{k =1}^M (n_k) ! $ for $n \in \mathbb N^M$ for $M \geq 1 $ and the notation $ |n | = \sum_{ k \in \Omega  } n(k)  $. 
Finally we use the following notation for the sequence $( f)_{ n \in \mathbb N^M}  $ that are probabilities in $ \mathbb N^M $
\[
\mathcal P ( \mathbb N^M ) = \left\{ (f_n )_{ n \in  \mathbb N^M } : f_n \geq 0 , \sum_{n \in  \mathbb N^M  } f_n = 1  \right\}. 
\]
The set $c_{00}(\mathbb N^M ) $ is the set of the sequences in $\mathbb N^M $ that have compact support. 
In the paper it will also be convenient to denote with $\ell^1_{\text{weak}} (\mathbb N^M ) $ the space $ \ell^1 (\mathbb N^M )$ endowed with the $\ast-$weak topology.

Assume that $ U \subset \Omega $ and that $ v \in \mathbb R^\Omega  $ we denote with $\pi_U v \in \mathbb R^{U} $ the following vector 
\begin{equation} \label{proj reactions}
\pi_U v (k) =v(k ) , \quad k \in U. 
\end{equation}
\subsection{Plan of the paper} 
In Section \ref{sec:chemical systems} we introduce the class of models that we study in this paper. 
In Section \ref{sec:main} we summarize the main results that we prove in this paper. 
In Section \ref{sec:lack of detailed balance when sources and sinks} we show that the chemical systems that we consider in general do not satisfy the property of detailed balance.
More precisely we prove that the chemical system obtained adding sources and sinks to a chemical system that satisfies the property of detailed balance, does not satisfy the detailed balance property unless the sources and sinks rates are fine tuned or they satisfy a topological property.
In Section \ref{sec:existence} we prove the existence of a unique time dependent solution to equation \eqref{eq:master with sources intro}. In Section \ref{sec:convergence} we prove that the time dependent solution converges to a steady state as time tends to infinity. 
Finally in Section \ref{sec:examples} we apply the results of the paper to some examples of fluxes of molecules through channels in membranes. 

\section{Chemical systems with sources and sinks and conservative reactions} \label{sec:chemical systems}
In this paper we study a class of chemical systems in which we model two main types of reactions, namely conservative reactions and reactions that are sources and sinks. 
In this section we define this class of chemical systems and the corresponding master equation.

\subsection{Chemical systems} 

We assume that $\Omega := \{ 1, \dots, M  \} $, with $M \geq 1 $ is the set of the substances present in the system. 
A chemical system is a set of $r>0$ reactions between the substances in $\Omega $ and a set of sources and sinks. 
Chemical reactions map a non empty set of reactants into a non empty set of products, i.e. 
\[ 
\sum_{k \in \Omega } \nu_\rho (k)  \quad \overset{\rho}{\underset{-\rho}{\rightleftarrows}} \quad  \sum_{k \in \Omega } \nu_{-\rho}  (k) , \quad  \rho \in \{ 1 , \dots, r \}
\]
where $ \nu_\rho (k)$ is the number of molecules of substance of type $k \in \Omega $ that are reactants for the reaction $\rho $ while $ \nu_{-\rho} (k)$ is the number of molecules of substance of type $k \in \Omega  $ that are products for the reaction $\rho $. 
A source/sink $e_k, - e_k $, instead is a reaction of the form 
\[ 
\emptyset \quad \overset{e_k}{\underset{-e_k}{\rightleftarrows}} \quad (k), \quad k \in \Omega_c \subset \Omega. 
\]

More precisely, a \textit{chemical reaction} $ S_\rho$ between elements of $\Omega $ is a vector in $\mathbb Z^M$ with mixed signs, i.e. such that there exist $j, k  \in \Omega $ with $S_\rho (k) >0 $ and $S_\rho (j ) <0 $. 
 We denote with $\nu_\rho \in \mathbb N^M $ the vector defined as 
\begin{align*}
\nu_\rho( k )&:=  - S_\rho (k) \quad  \text{ if }  S_\rho(k) < 0  \\ 
\nu_\rho( k )&:= 0  \quad \text{ if } S_\rho(k) \geq  0.
\end{align*} 
Notice that by assumption we have that for every chemical reaction $S_\rho $ it holds that $\nu_\rho \neq 0 $ and $S_\rho - \nu_{\rho } \neq 0 $. 
We denote with $S_{- \rho } $ the reverse of the reaction $S_\rho$, i.e. $S_{- \rho} = - S_\rho$. 
In this paper we assume that the network is bidirectional, i.e. if the chemical reaction $S_\rho $ belongs to the system, then also the reverse reaction $S_{- \rho} $ belongs to the system. 
Moreover we assume that the set of the reactants and the set of the products of a chemical reactions are disjoint sets, i.e. we assume $\nu_\rho^T \nu_{- \rho} =0  $.
This assumption excludes autocatalytic reactions of the form $(A)+ (B) \rightarrow (A) $.

We define the stoichiometric matrix $\mathbb S_r $ of the chemical reactions as the matrix that has as columns the reaction vectors $\{ S_\rho \}_{\rho=1}^r $, i.e. 
\begin{equation} \label{stoic matrix}
\mathbb S_r := (S_\rho )_{ \rho =1}^r  =\left( \begin{matrix}
    S_1  S_2 \dots S_r
\end{matrix} \right) . 
\end{equation}
A \textit{source} is a vector of the form $ e_k \in \mathbb R^M  $ for some $k \in \Omega $. Similarly, a \textit{sink} is a vector of the form  $ - e_k \in \mathbb R^M $ for some $k \in \Omega $.  We assume that every time there exists a source of a chemical of type $ k \in \Omega $ there exists also a sink of the same chemical $k \in \Omega $. The set of the chemicals that are added and removed from the system with sources and sinks is the set $\Omega_c \subset \Omega$. 

The stoichiometric matrix associated to the chemical reactions and the sources and sinks is the following matrix in $\mathbb R^{M \times (r + |\Omega_c| ) } $
\begin{equation} \label{stoic matrix ext}
\mathbb S := \begin{bmatrix}
    \mathbb S \vert \textbf {I}_{M \times |\Omega_c|} 
\end{bmatrix}
= \begin{bmatrix}
     S_1 S_2 \dots S_r  \vert e_1 e_2 \dots e_{|\Omega_c|}
\end{bmatrix}. 
\end{equation}

Let us now specify the assumptions that we make on the \textit{kinetics} of the chemical reactions $ S_\rho$. 
In particular a kinetic associated with the chemical reaction $S_\rho $ is a map $ 
R_\rho : \mathbb N^M \rightarrow \mathbb R_+ $ that is such that $R_\rho (n) \neq 0$ if and only if $ (n - \nu_\rho )\in  \mathbb N^M$. 
Most of the results that we prove in this paper hold for chemical systems that are mass action systems, i.e. the  rate $R_\rho(n) $ at which the chemical reaction $S_\rho$ occurs when the system is at state $n \in \mathbb N^M $ is given by
\begin{equation} \label{mass action reactions} 
R_\rho (n) =\begin{cases} 
 K_\rho  \frac{ n!  }{ (n - \nu_\rho )!}, & \text{ if }  (n - \nu_\rho )\in \mathbb N^M,  \\
  0 &\text{ if } (n - \nu_\rho )\in \mathbb Z^M \setminus \mathbb N^M
\end{cases} 
\end{equation}
where $K_\rho >0 $ is the chemical rate associated with the reaction $S_\rho$. 
Similarly, we assume that a substance $k \in \Omega_c $ is created by a source at a rate $A_k (n)$  that is independent on the state $n \in \mathbb N^M $ of the system, i.e. we assume that for every $ k \in \Omega_c$
\begin{equation} \label{mass action source}
A_k (n) :=\alpha_k \quad \forall n \in \mathbb N^M 
\end{equation}
where $\alpha_k >0 $. 
Instead, the rate $B_k(n) $ at which a chemical $k \in \Omega_c $ is removed from the system by a sink depends linearly on the state $n \in \mathbb N^M $. Indeed we assume that for every $k \in \Omega_c $
\begin{equation} \label{mass action sink}
B_k  (n) :=\begin{cases} 
 \beta_k n _k, & \text{ if }  (n - e_k )\in \mathbb N^M,  \\
  0 &\text{ if } (n - e_k )\in \mathbb Z^M \setminus \mathbb N^M 
\end{cases} 
\end{equation}
with $ \beta_k >0 $.
In this paper we only consider sources and sinks that satisfy the mass action law \eqref{mass action source}-\eqref{mass action sink}.

\begin{definition}[Chemical system] \label{def:chemical}
Assume that $\mathcal R$ is a set of bidirectional chemical reactions between elements of $\Omega $, i.e. 
 $\mathcal R := \{ S_\rho \}_{\rho=1}^r \cup  \{ S_{- \rho }\}_{\rho=1}^r $ where $S_{\rho_1} \neq - S_{\rho_2 } $ for every $\rho_1 \neq \rho_2 $. Assume that $ \mathcal R_s $ is a set of sources and sinks, i.e.  $\mathcal R_s := \{e_k \}_{k \in \Omega_c}  \cup  \{ - e_{ k  }\}_{\Omega_c }  $ with $ \Omega_c \subset \Omega$.  
 Assume that $\mathcal K := \{ R_\rho  \}_{ \rho =1 }^r \cup \{ R_{-\rho}\}_{\rho=1}^r  $ is the set of kinetics associated with the chemical reactions $S_\rho, S_{- \rho } \in \mathcal R $ and $\mathcal K_s := \{ \alpha_k  \}_{ k \in \Omega_c }  \cup \{ \beta_k  \}_{ k \in \Omega_c } $ is the set of the chemical rates associated with the sources and sinks. 
 We say that  $(\Omega, \mathcal R,  \mathcal R_s , \mathcal K, \mathcal K_s )$ is a chemical system. Moreover 
 \begin{itemize}
\item if $ \Omega_c \neq \emptyset $, then we say that $(\Omega, \mathcal R,  \mathcal R_s , \mathcal K, \mathcal K_s )$ is a chemical system with sources and sinks. 
\item If $ \Omega_c = \emptyset $, then we say that $(\Omega, \mathcal R,  \mathcal R_s , \mathcal K, \mathcal K_s )$ is a chemical system without sources and sinks. We denote it with $(\Omega, \mathcal R,  \mathcal K)$. 
\end{itemize}
\end{definition}
Notice that  in Definition \ref{def:chemical} we are assuming that a chemical system contains only bidirectional reactions. 

In this paper we say that a chemical system is a \textit{mass action chemical system} if every kinetics is a mass action kinetics, i.e. is given by \eqref{mass action reactions}. 

We define the set of the conserved quantities associated with a set of chemical reactions $\mathcal R = \{ S_\rho\}_{ \rho =1}^r \cup   \{ S_{-\rho} \}_{ \rho =1}^r$ as 
  \begin{equation} \label{eq:stochio}
\mathcal M  :=  {\operatorname{span}\{ S_\rho : \rho \in \{ 1, \dots, r\}   \}}^{\perp}.  
    \end{equation}
We define now the set of physically relevant non-negative conservation laws
$\mathcal M_+:=\mathcal M \cap \mathbb R_*^N .$
A conservative set of reactions is a set of reactions that is such that every substance $ k \in \Omega $ appears in a conserved quantity $ m \in \mathcal M_+$, i.e. for every $k \in \Omega $ there exists a  $ m \in \mathcal M_+$ such that $m (k ) >0 $. More precisely we have the following definition. 
\begin{definition} [Conservative set of reactions] \label{def:conservative}
Assume that $\mathcal R $ is a set of chemical reactions. Then we say that $\mathcal R $ is conservative if
\begin{equation}\label{eq:conservative}
\mathcal M_+ \cap \mathbb R_+^M \neq \emptyset.
\end{equation}
\end{definition}
We now define a suitable basis for the space $\mathcal M$.
Notice that the set of the positive conserved quantities $\mathcal M_+ $ is a cone, it turns out that it is finitely generated by its extreme rays, i.e. the set of the $m \in \mathcal M_+$ that cannot be written as $m = \gamma_1 m_1 + \gamma_2 m_2 $ for  $m_1, m_2 \in \mathcal M_+$ linearly independent and $\gamma_1, \gamma_2 >0$, see \cite{rockafellar1997convex}. 

\begin{lemma} \label{lem:positivity of the conservation laws}
If a set $ \mathcal R$ of chemical reactions is conservative, then the set of the extreme rays $\mathcal B= \{ m_k \}_{k =1 }^\ell $ of the positive cone $\mathcal M_+$ are a basis of $\mathcal M $. 
\end{lemma}
We call the set $\mathcal B $ the \textit{extremal basis} of the conserved quantities. We refer to \cite{franco2025detailed} for the proof of Lemma \ref{lem:positivity of the conservation laws}. 

Since in this paper we are interested in analysing the property of detailed balance of chemical systems we introduce the definition of cycles. 
\begin{definition}[Cycles] \label{def:cycles}
The space of the cycles of a chemical system  $(\Omega, \mathcal R, \mathcal R_s,  \mathcal K, \mathcal K_s )$ is defined as
\begin{equation*}
\mathcal W := \left\{  w \in \mathbb R^{r + |\Omega_c|} : \sum_{\rho=1}^{r } w(\rho) S_\rho + \sum_{ k \in \Omega_c } w(k ) =0 \right\}. 
\end{equation*}
\end{definition}

\subsection{The master equation}

The master equation describing the evolution in time of a stochastic chemical system with sources and sinks  $(\Omega, \mathcal R,  \mathcal R_s , \mathcal K, \mathcal K_s )$  is the following equation
\begin{align} \label{eq:master with sources}
    \partial_t f (t, n) &= \sum_{ \rho =1}^r [R_\rho(n - S_\rho ) f(t,n-S_\rho ) + R_{-\rho} ( n + S_\rho ) f(t, n+ S_\rho ) ] \\
    & -f(t, n)  \sum_{ \rho =1}^r(R_\rho (n) + R_{- \rho} (n) ) + \sum_{ k \in \Omega_c } [ A_k(n - e_k )   f(t,  n - e_k ) + B_k(n+e_k) f(t, n+e_k) ] \nonumber \\
    &-  \sum_{ k \in \Omega_c } [ A_k(n)  + B_k(n)  ] f( t, n  ). \nonumber 
\end{align}
Here $f(t, n ) $ represents the probability that at time $t >0$ the system has state $n \in \mathbb N^M $. Therefore we will look for solutions that are such that $f(t, \cdot ) \in \mathcal P (\mathbb N^M)$ for every $t \geq 0 $.

A steady state to the master equation \eqref{eq:master with sources} is a $ g \in \mathcal P (\mathbb N^M ) $ that is such that 
\begin{align} \label{eq:master with sources_stst}
    0 &= \sum_{ \rho =1}^r [R_\rho(n - S_\rho ) g(n- S_\rho ) + R_{-\rho} ( n + S_\rho ) g( n+ S_\rho ) ]  -g( n)  \sum_{ \rho =1}^r(R_\rho (n) + R_{- \rho} (n) ) \\
    & + \sum_{ k \in \Omega_c } [ A_k(n - e_k )  g( n - e_k ) + B_k(n+e_k) g(n+e_k) ] -  \sum_{ k \in \Omega_c } [ A_k(n)  + B_k(n)  ] g( n  ). \nonumber 
\end{align}

It is convenient to formulate the master equation in weak form. To this end we multiply the equation \eqref{eq:master with sources} by a test function $ \varphi \in c_{ 00} (\mathbb N^M) $ and we sum over all  $n \in \mathbb N^M $
\begin{align} \label{eq:master with sources weak}
\sum_{  n \in \mathbb  N^M  } \varphi(n) \partial_t   f (t, n) = \sum_{ n \in \mathbb N ^M }  f (t, n)  \mathcal L_s [\varphi] ( n ) +  \sum_{ n \in \mathbb N ^M }   f (t, n)  \mathcal L_r   [\varphi] (n ) 
\end{align}
where we use the notation 
\begin{align*}
\mathcal L_r   [\varphi] (n )  : =  \sum_{ \rho =1}^r  R_\rho(n) \left[ \varphi(n+ S_\rho) - \varphi(n)  \right] +   \sum_{ \rho =1}^r   R_{-\rho}(n) \left[ \varphi(n- S_\rho) - \varphi(n)  \right]  \nonumber \\
\end{align*}
\begin{align*} 
     \mathcal L_s  [\varphi] (n )  :=       \sum_{ k \in \Omega_c }  A_k (n)   \left[ \varphi(n+ e_k ) - \varphi(n)  \right] +    \sum_{ k \in \Omega_c } B_k (n)   \left[ \varphi(n-  e_k ) - \varphi(n)  \right].  \nonumber
\end{align*}

\section{Main results} \label{sec:main}
In this section we state the main results that we prove in this paper for the class of systems introduced in Section \ref{sec:chemical systems}.

As a first step we prove that the chemical systems that are obtained adding sources and sinks to a stochastic chemical system that satisfies the detailed balance property, do not satisfy the detailed balance property unless the sources and sinks do not belong to any cycle or the sources and sinks rates are selected to be at equilibrium values. 

We start introducing the definition of detailed balance. 
\begin{definition}[Detailed balance]
A chemical system  $(\Omega, \mathcal R,  \mathcal R_s , \mathcal K, \mathcal K_s )$ satisfies the property of detailed balance if there exists a  $f_s  \in \mathcal P (\mathbb N^M ) $ that is  such that 
\begin{equation} \label{eq:detailed balance reactions}
R_\rho (n) f_s( n)  = R_{- \rho}(n+ S_\rho) f_s( n + S_\rho), \quad \forall n \in \mathbb N^M, \  \forall \rho \in \{ 1, \dots, r \}
\end{equation}
and 
\begin{equation} \label{eq:detailed balance reactions}
A_k (n) f_s( n)  = B_k (n+ e_k) f_s( n + e_k ), \quad \forall n \in \mathbb N^M, \  \forall k \in \Omega_c. 
\end{equation}
\end{definition}
Notice that by its definition $f_s $ is a steady state to the master equation \eqref{eq:master with sources}.

  \begin{theorem} \label{cor:sources break detailed balance} 
  Assume that $(\Omega, \mathcal R , \mathcal R_s ,  \mathcal K, \mathcal K_s )$ is a mass action chemical system with sources and sinks.  Let $\mathcal W_r $ be the set of the cycles associated with $(\Omega, \mathcal R , \emptyset ,  \mathcal K, \emptyset )$ and  $ \mathcal W$  the set of the cycles associated with $(\Omega, \mathcal R , \mathcal R_s ,  \mathcal K, \mathcal K_s )$. 
  \begin {enumerate} 
 \item  Assume that
\begin{equation} \label{eq for the parameters}
\log \left( \frac{\alpha_k }{\beta_k }\right) =E(k), \quad \forall k \in \Omega_c^{\mathcal W} 
\end{equation}
where $E \in \mathbb R^M $ is the  solution to 
 \begin{equation} \label{energy reactions}
 S_\rho^T  E = \ln \left( \dfrac{ K_{ - \rho} }{ K_{  \rho } }  \right)  \ \forall \rho \in \{ 1, \dots, r \}
\end{equation} 
and where 
\begin{equation}
    \Omega_c^{\mathcal W }:= \{ k \in \Omega : \exists w \in \mathcal W  \quad (\mathbb S_r  \pi_R w ) ( k ) \neq 0   \}. 
\end{equation}
Then the chemical system  $(\Omega, \mathcal R , \mathcal R_s ,  \mathcal K, \mathcal K_s )$ satisfies the property of detailed balance. 
 \item    Assume that the chemical system $(\Omega, \mathcal R , \mathcal R_s ,  \mathcal K, \mathcal K_s )$ satisfies the detailed balance condition and 
 \begin{equation} \label{ineq cycles}
 |\mathcal W| >  |\mathcal W_r |. 
 \end{equation}
Then either \begin{enumerate}
\item the chemical rates in the set $ \mathcal K_s =\{ \alpha_k , \beta_k \}_{ k \in  \Omega_c} $ satisfy \eqref{eq for the parameters}  
\item or the property of detailed balance is unstable, i.e. for every $\delta >0$ there exists a set $ \mathcal K^\delta =\{ K_\rho^\delta , K_{- \rho}^\delta \}_{\rho=1}^r $ such that 
\[ 
\max_{ k \in \Omega } \left( |K_\rho - K_\rho^\delta | +  | K_{-\rho} - K_{-\rho}^\delta | \right)  \leq \delta
\]
and such that the chemical system  $(\Omega, \mathcal R , \mathcal R_s ,  \mathcal K^\delta, \mathcal K_s )$ does not satisfy the detailed balance property. 
\end{enumerate}
\end{enumerate}
\end{theorem}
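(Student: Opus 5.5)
The whole argument goes through the product-form candidate $f_s(n) = C\,e^{-E^T n}/n!$, with $n! = \prod_k n_k!$. For mass action kinetics, detailed balance for reaction $\rho$ at $n$ reads
\[
K_\rho \frac{n!}{(n-\nu_\rho)!} f_s(n) = K_{-\rho}\frac{(n+S_\rho)!}{(n+S_\rho-\nu_{-\rho})!} f_s(n+S_\rho).
\]
Since $n-\nu_\rho = n+S_\rho-\nu_{-\rho}$ (this uses $\nu_\rho^T\nu_{-\rho}=0$), the factorial terms cancel once $f_s(n) = c(n)/n!$. The condition then reduces to $c(n+S_\rho)/c(n) = K_\rho/K_{-\rho}$, i.e.\ $\log c$ is affine with $S_\rho^T(\log c) = -\ln(K_{-\rho}/K_\rho)$. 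In the same way, the source/sink condition $\alpha_k f_s(n) = \beta_k (n_k+1) f_s(n+e_k)$ becomes $c(n+e_k)/c(n) = \alpha_k/\beta_k$. So with $c(n) = e^{\theta^T n}$, detailed balance holds if and only if $S_\rho^T\theta = -\ln(K_{-\rho}/K_\rho)$ for all $\rho$ and $\theta_k = \ln(\alpha_k/\beta_k)$ for all $k\in\Omega_c$. Normalisability is automatic because $\sum_n e^{\theta^T n}/n! = \prod_k \exp(e^{\theta_k})<\infty$. Conversely, any detailed-balance $f_s$ must be of this form on each communicating class: the ratios along transitions are forced, and the Markov chain with sources and sinks is irreducible on the relevant classes. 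So the whole problem reduces to solvability of a linear system for $\theta$. I will match the paper's sign convention $\theta = \pm E$ and not worry about it here.

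\textbf{Part 1.} Given $E$ solving \eqref{energy reactions}, I want $\theta$ with $S_\rho^T\theta = S_\rho^T E$ and $\theta_k = \ln(\alpha_k/\beta_k)$ for $k\in\Omega_c$. Write $\theta = E + \mu$ with $\mu\in\mathcal M = \operatorname{span}\{S_\rho\}^\perp$. The requirement becomes $\mu_k = \ln(\alpha_k/\beta_k) - E(k)$ for $k\in\Omega_c$, and this right-hand side vanishes on $\Omega_c^{\mathcal W}$ by hypothesis. By Fredholm's alternative, a vector $\mu\in\mathcal M$ with prescribed values $d_k$ on $\Omega_c$ exists if and only if $\sum_{k\in\Omega_c} w(k) d_k = 0$ for every $w$ with $\sum_\rho w(\rho) S_\rho + \sum_k w(k) e_k = 0$, i.e.\ for every $w\in\mathcal W$. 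For such $w$, $w(k) = -(\mathbb S_r\pi_R w)(k)$ for $k\in\Omega_c$, so $w(k)\neq 0$ only when $k\in\Omega_c^{\mathcal W}$, where $d_k=0$. The compatibility condition therefore holds, and detailed balance follows.

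\textbf{Part 2.} Assume detailed balance, so a $\theta$ exists as above. The same Fredholm computation shows that this forces $\sum_{k} w(k)\,[\ln(\alpha_k/\beta_k) - E(k)] = 0$ for all $w\in\mathcal W$, for any solution $E$ of \eqref{energy reactions}; the choice of $E$ is irrelevant modulo $\mathcal M$ because $\sum_k w(k)\mu_k = -\mu^T\mathbb S_r\pi_R w = 0$. Suppose (a) fails. Since $|\mathcal W|>|\mathcal W_r|$, I take $|\cdot|$ to mean dimension, so some cycle $w$ has nonzero source components, which gives $\Omega_c^{\mathcal W}\neq\emptyset$. I then perturb $K_{-\rho}$ for one reaction $\rho$ with $(\pi_R w)(\rho)\neq0$. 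This changes $\ln(K_{-\rho}/K_\rho)$ by $\epsilon$ and hence $E$ by some $\delta E$ with $S_{\rho'}^T\delta E = \epsilon\,\delta_{\rho\rho'}$. The resulting change in $\sum_k w(k)E(k)$ equals $-\delta E^T\mathbb S_r\pi_R w = -\epsilon\,(\pi_R w)(\rho)\neq 0$.

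The main obstacle is that this perturbation must itself keep \eqref{energy reactions} solvable, i.e.\ stay compatible with the cycles in $\mathcal W_r$. To handle it, I choose the perturbation vector of log-ratios in $\operatorname{range}(\mathbb S_r^T)$, which is the orthogonal complement of $\mathcal W_r$. Such a vector can still pair nonzero with $\pi_R w$, because $\pi_R w\notin\mathcal W_r$ whenever $w$ has nonzero source part. So arbitrarily small perturbations, for all $\delta>0$, violate the necessary identity and destroy detailed balance, giving (b).
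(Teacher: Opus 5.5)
Your argument is essentially the paper's. The Fredholm alternative you use is the Wegscheider criterion combined with the paper's identity for $\Delta(w)$. Your repaired perturbation, with log-ratios in $\operatorname{range}(\mathbb S_r^T)=\mathcal W_r^\perp$, is the paper's choice $\log(K^\delta_{-\rho}/K_{-\rho})=E_\delta^TS_\rho$ with $E_\delta=\epsilon e_\ell$ and $(\mathbb S_r\pi_{\mathcal R}\tilde w)(\ell)\neq 0$. However, the sign you set aside is not a convention; it is a genuine error, and the paper's proof makes the same one. Your own reduction requires $S_\rho^T\theta=\ln(K_\rho/K_{-\rho})=-S_\rho^TE$. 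So you must take $\theta=-E+\mu$, the data to match are $d_k=\ln(\alpha_k/\beta_k)+E(k)$, and the hypothesis that makes them vanish on $\Omega_c^{\mathcal W}$ is $\ln(\alpha_k/\beta_k)=-E(k)$. This is consistent with $e_k^TE=\ln(\beta_k/\alpha_k)$ in the paper's Wegscheider lemma. Your Part~1 instead imposes $S_\rho^T\theta=+S_\rho^TE$, which contradicts your reduction. With the literal hypothesis, Part~1 is false. Take $(1)\rightleftarrows(2)$ with sources and sinks of both species, $K_\rho=2$, $K_{-\rho}=1$, $\alpha_1=\beta_1=\alpha_2=1$, $\beta_2=2$. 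Then $E=(0,-\ln 2)$ solves the energy equation and satisfies $\log(\alpha_k/\beta_k)=E(k)$ for $k=1,2$. But with $g(n)=n!f_s(n)$, the source/sink relations force $g(n)=g(0)2^{-n_2}$, while the reaction forces $g(n-e_1+e_2)/g(n)=2$. In the paper the slip is in the proof of the $\Delta(w)$ proposition, where $\sum_\rho w(\rho)\log(K_\rho/K_{-\rho})$ is replaced by $+\sum_\rho w(\rho)S_\rho^TE$. That is why it gets $(E-D)$ instead of $-(E+D)$. With the sign corrected, your Part~1 and the identity you use in Part~2 go through verbatim.

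The second gap is the necessity direction (detailed balance $\Rightarrow$ compatibility), which you need in Part~2 for the perturbed system. Your claim that the chain is irreducible on the relevant classes fails whenever some conserved quantity of $\mathcal R$ is not touched by the sources and sinks. In the paper's first membrane-channel model, the number of channels is such a quantity. Consider the measure concentrated on states with no channel, with independent Poisson laws of means $\alpha_k/\beta_k$ for the inner and outer molecules. It satisfies every source/sink relation. Every reaction relation holds trivially, because no reaction can fire there and reactions preserve the channel number. So that system is detailed balanced in the paper's sense (some $f_s\in\mathcal P(\mathbb N^M)$ exists) for all rates, even though $|\mathcal W|>|\mathcal W_r|$. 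For rates violating (a), alternative (b) then fails too. The paper's Wegscheider lemma has the same gap: its walk from $g(0)$ along a cycle tacitly assumes every step is admissible and $g>0$ along the way. The repair is to require the detailed-balance measure to be strictly positive. Then $g(n+S_\rho)/g(n)=K_\rho/K_{-\rho}$ for all $n\ge\nu_\rho$ and $g(n+e_k)/g(n)=\alpha_k/\beta_k$ for all $n$. Walking along an integer cycle $w\in\mathcal W$ (integer cycles span $\mathcal W$) from a state with all coordinates large gives $\sum_\rho w(\rho)\ln(K_\rho/K_{-\rho})+\sum_k w(k)\ln(\alpha_k/\beta_k)=0$. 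This is exactly the compatibility condition your Fredholm argument needs.
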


The statement of Theorem \ref{cor:sources break detailed balance} can be summarized as follows. 
Assume that a chemical system satisfies the detailed balance property. Denote with $ \mathcal W_r$ the set of the cycles associated with this system. 
Assume that sources and sinks of substances in the set $ \Omega_c $ are added to the set of reactions of the system. If adding the sources and sinks does not produce new cycles, then the set $ \Omega_c^{\mathcal W} $ is empty. Therefore the chemical system with sources and sinks satisfies the detailed balance condition for any choice of sources and sinks rates. Otherwise if a cycle $w $ is created by adding sources and sinks (i.e. when $ |\mathcal W| > |\mathcal W_r| $), then the detailed balance property holds for the system with sources and sinks if  the sources and sinks rates of substances that belong to the cycle $w$ can be written in terms of the Gibbs free energy $E$ satisfying \eqref{energy reactions}, i.e. they must be given by \eqref{eq for the parameters}. 
Instead, if the sources and sinks rates are not of the form \eqref{eq for the parameters} then the detailed balance property can hold in the chemical system with sources and sinks only in an unstable manner. 
 In other words, if new cycles are created adding sources and sinks, then the sources and sinks rates need to be fine-tuned in order to obtain a  system with sources and sinks that satisfies the detailed balance condition.

We now discuss our results on the long time behaviour of the solution to the master equation. 
We start by stating the definition of weak solution to \eqref{eq:master with sources}. 

\begin{definition}[Weak solution] \label{def:solution master eq}
Let $f_0 \in \mathcal P (\mathbb N^M )$. 
A weak solution to equation \eqref{eq:master with sources} with initial datum $f_0$  is a function $f:  [0, \infty ) \rightarrow \mathcal P (\mathbb N^M) $ such that $f(\cdot, n ) $ is continuously differentiable for every $n \in \mathbb N^M $,  $f(0, \cdot)=f_0 $ 
and satisfies \eqref{eq:master with sources weak} for every test function $\varphi \in c_{00}(\mathbb N^M )$. 
\end{definition}

It is well known (see for instance \cite{anderson2018non}) that the continuous-time Markov chain modelling a chemical system can be explosive, i.e.  an infinite number of jumps can take place in finite time $T>0$.
This happens if and only if the solution $f$ to the master equation is such that 
\[ 
\sum_{n \in \mathbb N^M} f(t, n ) = 1 
\]
only if $0 < t \leq T  < \infty $ and for $t > T $ we have that 
\[ 
\sum_{n \in \mathbb N^M} f(t, n ) < 1. 
\]

It was proven in \cite{maas2020modeling} that chemical systems that satisfy the detailed balance property are not explosive and hence the solution to the master equation \eqref{eq:master with sources} is such that $f(t, \cdot ) \in \mathcal P (\mathbb N^M) $ for every time $ t \geq 0 $. The same result has been proven in \cite{anderson2018non} for systems that satisfy the complex balance property.  

One of the results that we prove in this paper is that stochastic chemical systems with sources and sinks are non-explosive if the chemical reactions are conservative.

\begin{remark}
Notice that conservative chemical systems with sources and sinks do not satisfy in general the complex balance property. 
\end{remark}

\begin{theorem}\label{thm:existence and uniqueness of time dep}
Assume that $(\Omega, \mathcal R, \mathcal R_s, \mathcal K , \mathcal K_s) $ is a chemical system with sources and sinks and that the set of reactions $\mathcal R $ is conservative. 
    There exists a weak solution $f:  [0, \infty ) \rightarrow \mathcal P (\mathbb N^M) $ to equation \eqref{eq:master with sources weak} in the sense of Definition \ref{def:solution master eq}. If the chemical system $(\Omega, \mathcal R, \mathcal R_s, \mathcal K , \mathcal K_s) $ is a mass action chemical system then the solution is unique. 
 \end{theorem}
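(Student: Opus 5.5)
We construct the solution by truncation and compactness, using a conserved quantity with strictly positive entries as a Lyapunov function. By Definition \ref{def:conservative} and Lemma \ref{lem:positivity of the conservation laws}, there is $m \in \mathcal M_+$ with $m(k)>0$ for every $k\in\Omega$; set $V(n):= m^T n$. Then $V$ is comparable to $|n|$, i.e. $c|n|\le V(n)\le C|n|$. Since $m\perp S_\rho$ for all $\rho$, we have $\mathcal L_r[V]\equiv 0$. For the sources and sinks,
\[
\mathcal L_s[V](n)=\sum_{k\in\Omega_c}\big(\alpha_k m(k)-\beta_k m(k) n_k\big)\le \sum_{k\in\Omega_c}\alpha_k m(k)=:C_0 .
\]
This is a linear growth bound; it holds for any kinetics of $\mathcal R$, mass action or not. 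It is the key structural estimate, and it is the Lyapunov condition that rules out explosion.

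\textbf{Existence.} For $N\in\mathbb N$ we consider the truncated master equation on the finite set $\{n: V(n)\le N\}$. In it we suppress every jump (reaction or source) that would leave the set. Equivalently, the generator keeps only the rates of transitions staying inside, and each removed jump is also removed from the diagonal term, so that mass is conserved. This is a finite linear ODE system. It has a unique solution $f_N(t)\in\mathcal P$ with $f_N(0)=f_0$ restricted and renormalized. Testing with $V$ gives $\frac{d}{dt}\sum V f_N \le C_0$, since the suppressed jumps would only have increased $V$, or are reactions, which preserve $V$. Hence
\[
\sum_n V(n) f_N(t,n)\le \sum_n V f_0 + C_0 t
\]
uniformly in $N$. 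If $f_0$ does not have a finite first moment, we first approximate $f_0$ by compactly supported data; the tightness statement below only needs a uniform bound on finite time intervals for each fixed approximation, and a diagonal argument handles this. Each $f_N(\cdot,n)$ is Lipschitz in $t$ uniformly in $N$, because for fixed $n$ the equation involves finitely many neighbours with bounded rates. Arzelà--Ascoli and a diagonal extraction then give $f_N\to f$ pointwise, locally uniformly in $t$. The moment bound gives tightness, so $f(t)\in\mathcal P(\mathbb N^M)$ for all $t$; this is exactly non-explosivity. Passing to the limit in the integrated weak form with $\varphi\in c_{00}$ is immediate, since $\mathcal L[\varphi]$ has finite support for finitely supported $\varphi$. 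Continuity of the right-hand side then upgrades the integral form to $C^1$ in $t$ for each $n$.

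\textbf{Uniqueness in the mass action case.} Let $f,g$ be two weak solutions with the same datum and set $h=f-g$. The main obstacle is that weak solutions in Definition \ref{def:solution master eq} are not assumed to have moments, and the rates are unbounded, so a naive Gronwall estimate in $\ell^1$ fails. I would use duality. For a finitely supported $\psi$, solve the backward (adjoint) equation $\partial_s\varphi+\mathcal L[\varphi]=0$, with $\varphi(T)=\psi$, on truncated domains, obtaining $\varphi_N$ with $\|\varphi_N\|_\infty\le\|\psi\|_\infty$ by the maximum principle. We then need to control $\sum_n h(t,n)\mathcal L[\varphi_N](n)$ on the boundary layer $\{V(n)\approx N\}$.

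Here mass action enters. Because $V$ is conserved by the reactions, the reactions never cross level sets of $V$. Only sources move mass from level $N$ to $N+1$, and they do so at constant rate $\alpha_k$. So the truncation error only involves the bounded rates $\alpha_k$ times the mass of $f$ and $g$ at the boundary level. This mass tends to $0$ as $N\to\infty$ because $f,g\in\mathcal P$. The reaction rates themselves never appear in the error, since a reaction stays on its level set. Mass action (polynomial rates on finite level sets) guarantees that the dynamics on each finite level set is a well-defined finite system. It also guarantees that the weak formulation can be tested with functions $\mathbf 1_{\{V\le N\}}\varphi$. Altogether this yields $\sum_n h(T,n)\psi(n)=0$ for every $\psi$, hence $f=g$. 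The delicate point, which I would check carefully, is justifying testing the weak formulation against $\varphi_N$, which is not compactly supported in time-space. I would handle it by noting that $\{V\le N\}$ is finite, so $\mathbf 1_{\{V\le N\}}\varphi_N\in c_{00}$, and by tracking the commutator term explicitly.
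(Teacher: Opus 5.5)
Your existence argument is correct and is essentially the paper's: truncation, Arzel\`a--Ascoli coordinatewise, and tightness from the conserved quantity $V=m^Tn$. Truncating along level sets of $V$, which reactions cannot leave, is a clean choice. Your linear-in-time bound $\sum_n Vf_N(t)\le \sum_n Vf_0+C_0t$ is all this theorem needs. (The paper's uniform-in-time version uses $\sum_{k\in\Omega_c}\beta_k m(k)n_k\ge \delta\, m^Tn$, which requires $\Omega_c=\Omega$.)

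One caveat concerns data without a first moment, which the paper does not treat. Your remark does not work as stated, because tightness is not uniform over the approximations $f_0^{(j)}$. You need an extra ingredient, for instance the $\ell^1$-contraction of the finite truncated flows. It survives the limit $N\to\infty$ by Fatou and makes $f^{(j)}(t)$ Cauchy in $\ell^1$, so the limit keeps mass $1$.

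The genuine gap is in the uniqueness step. Set $D_N:=\{V\le N\}$, $\bar m:=\max_k m(k)$ and $\Phi(t,\cdot):=\mathbf 1_{D_N}\varphi_N(T-t,\cdot)$. The duality identity
\[
\sum_n h(T,n)\psi(n)=\int_0^T\sum_n h(t,n)\bigl(\partial_t\Phi+\mathcal L[\Phi(t)]\bigr)(n)\,dt
\]
picks up errors from jumps across $\partial D_N$ in \emph{both} directions, not only the source exits you describe. For $n\notin D_N$ one has $(\partial_t\Phi+\mathcal L[\Phi])(n)=\sum_{k\in\Omega_c}\beta_k n_k\,\mathbf 1_{D_N}(n-e_k)\,\varphi_N(T-t,n-e_k)$. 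This is supported on the shell $\{N<V\le N+\bar m\}$, where $\beta_k n_k$ is of order $N$. The resulting error is of size
\[
C\|\psi\|_\infty\, N\int_0^T (f+g)(t,\{N<V\le N+\bar m\})\,dt,
\]
and $f,g\in\mathcal P$ alone does not make this vanish. So your claim that only the bounded rates $\alpha_k$ enter the truncation error is false.

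The repair is easy but must be made. One option: $a_N:=\int_0^T(f+g)(t,\{N<V\le N+\bar m\})\,dt$ is summable in $N$, so $\liminf_N N a_N=0$ and you can pass to the limit along a subsequence. Another: replace $\mathbf 1_{D_N}$ by a cutoff $\chi(V/N)$ varying on scale $N$. Its commutator with the sink part is then $O(1)$ and supported where the mass of $f,g$ tends to zero.

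The paper handles the boundary differently. It proves the super-exponential bound $\psi(s,n)\le Ce^{-b|M(n)|e^{-\gamma s}}$ for the dual solution via the supersolution $e^{-bM(n)e^{-\gamma s}}$. That decay absorbs the polynomially growing mass-action rates at any cutoff, and this is exactly where the paper uses mass action. In your level-set scheme mass action plays no role: $D_N$ is finite because $m>0$, and testing with $\mathbf 1_{D_N}\varphi_N$ is legitimate for any kinetics. Your explanation of where mass action enters is therefore misplaced. Once the sink term is treated, your route gives uniqueness for general kinetics, which is more than the paper claims.
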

Notice that the existence result in Theorem \ref{thm:existence and uniqueness of time dep} holds also for non mass action chemical systems. 
\begin{remark}
    As will be explained in more detail in Section \ref{sec:existence} (see Theorem \ref{thm:existence of time dep non rev}) the existence of a time dependent solution to the  master equation holds also for chemical systems with sources and sinks where the chemical reactions are not necessarily reversible, i.e. when we consider the master equation 
    \begin{align} \label{eq:master with sources non reversible}
    \partial_t f (t, n) &= \sum_{ \rho =1}^r R_\rho(n - S_\rho ) f(t,n-S_\rho )  -f(t, n)  \sum_{ \rho =1}^r R_\rho (n)  \\
    & + \sum_{ k \in \Omega_c } [ A_k(n - e_k )   f(t,  n - e_k ) + B_k(n+e_k) f(t, n+e_k) ] \nonumber \\
    &-  \sum_{ k \in \Omega_c } [ A_k(n)  + B_k(n)  ] f( t, n  ). \nonumber 
\end{align}
    and the chemical reactions $ \{ S_\rho \}_{ \rho =1}^r$ are conservative. 
\end{remark}

 We will then study the long-time behaviour of the solution the master equation. In particular we prove that it converges to a unique steady state as time tends to infinity. 
\begin{theorem} \label{thm: convergence to steady state}
    Assume that $(\Omega, \mathcal R, \mathcal R_s, \mathcal K , \mathcal K_s) $ is a mass action chemical system with sources and sinks and that the set of reactions $\mathcal R $ is conservative.
   Then there exists a unique steady state $ \overline  f \in \mathcal P(\mathbb N^M ) $ to equation \eqref{eq:master with sources weak}. 
    Moreover, we have that
    \begin{equation}
        f(t, \cdot  ) \rightharpoonup \overline f \text { as }  t \to \infty \text{ in the } \ast-\text{weak  topology}. 
    \end{equation}
\end{theorem}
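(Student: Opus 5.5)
The plan is to work with a Lyapunov-type moment bound coming from the conservation law. Since $\mathcal R$ is conservative, there is $m\in\mathcal M_+$ with $m(k)>0$ for all $k$. Set $V(n)=m^Tn$. Then $\mathcal L_r[V]=0$, because $m^TS_\rho=0$. For the sources and sinks,
\[
\mathcal L_s[V](n)=\sum_{k\in\Omega_c} m(k)(\alpha_k-\beta_k n_k).
\]
This step requires every chemical to be reachable by a sink, which fails if $\Omega_c\neq\Omega$. So I would refine the functional: $V$ is nonincreasing in the species not in $\Omega_c$, but it is only controlled through the sinks on $\Omega_c$. This gives $\frac{d}{dt}\sum_n V f\le C$, and more precisely $\frac{d}{dt}\sum_n Vf\le C-c\sum_n(\sum_{k\in\Omega_c}n_k)f$.

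To obtain a uniform bound like \eqref{bound on mass intro}, I would combine this with the structure of the conserved quantities. Inside each fiber $\{m^Tn=\text{const}\}$ the reaction dynamics is a finite irreducible-on-classes chain. Using mass action and the fact that every species lies in the support of some extremal conservation law (Lemma \ref{lem:positivity of the conservation laws}), the time-average of $\sum_{k\in\Omega_c}n_k$ should be comparable to $V$ on each class that can exchange with $\Omega_c$.

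\textbf{Existence of a steady state.} Run the semigroup from a starting point such as $f_0=\delta_0$. Once the moment $\sum V f(t)$ is bounded uniformly in $t$, the family $\{f(t)\}$ is tight. The Cesàro averages $\frac1T\int_0^T f(t)\,dt$ then have a $\ast$-weak limit point $\overline f\in\mathcal P(\mathbb N^M)$; tightness is what prevents loss of mass. Passing to the limit in \eqref{eq:master with sources weak} with $\varphi\in c_{00}$ is harmless, since $\mathcal L\varphi$ has finite support. This gives a stationary solution, i.e. a Krylov–Bogoliubov (fixed point) argument.

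\textbf{Uniqueness and convergence.} I would use the dual problem $\partial_t\psi=\mathcal L\psi$ with $\psi(0)=\varphi$ bounded, solved via Theorem \ref{thm:existence and uniqueness of time dep} and duality. I would show $\psi(t,n)\to c(\varphi)$ for every $n$. The key ingredient is irreducibility: with mass action kinetics every reaction fires whenever its reactants are present, and the sources and sinks connect every state to $0$ and back. Concretely, remove all molecules by reactions plus sinks, then create any target by sources plus reactions. A bit of care is needed for species outside $\Omega_c$. These must be produced by reactions from $\Omega_c$ species, and states not reachable form a transient or closed class to handle separately. The irreducible part gives a positive recurrent chain, with recurrence coming from the Lyapunov bound. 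The maximum principle then gives $\operatorname{osc}\psi(t)$ nonincreasing, and a coupling/Doeblin estimate on the compact set $\{V\le R\}$, which is visited often by the moment bound, gives strict contraction. Hence $\psi(t)\to$ const. Then $\sum f(t)\varphi=\sum f_0\psi(t)\to c(\varphi)$ by dominated convergence, which yields both convergence and uniqueness.

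The main obstacle is the uniform-in-time moment bound and the recurrence when $\Omega_c\neq\Omega$. The sinks only dissipate species in $\Omega_c$, so one must show that conservation plus mass action forces the mass to flow into $\Omega_c$. My first attempt would be a weighted functional $V+\epsilon\sum_\rho(\dots)$ constructed along reaction paths from each species to $\Omega_c$.
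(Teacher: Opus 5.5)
\textbf{Gap: irreducibility fails when $\Omega_c\neq\Omega$.} You cannot rescue it by treating unreachable states ``separately''. If $\Omega_c\neq\Omega$, there may be a nonzero $\mu\in\mathcal M$ with $\mu(k)=0$ for all $k\in\Omega_c$. Then every jump vector $\pm S_\rho$, $\pm e_k$ is orthogonal to $\mu$. So $\mu^Tn$ is constant along trajectories of the full chain, each of its level sets is a closed class, and two positive recurrent level sets carry two different stationary laws.

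The paper's own model in Section~\ref{sec:example 1} is such a case. The number of channels $n_0+n_1$ is invariant. On $\{n_0=n_1=0\}$ no reaction can fire, so the product of Poisson laws with means $\alpha_k/\beta_k$ for the two molecule counts is stationary. Meanwhile $\{n_0+n_1=1\}$ carries a different stationary law. The co-transport model of Section~\ref{sec:example 2} is analogous. Such invariants need not be nonnegative: $(1)\rightleftarrows(2)+(3)$ with $\Omega_c=\{1\}$ conserves $n_2-n_3$.

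Hence uniqueness of $\overline f$ fails in the stated generality. So does your intermediate claim $\psi(t,\cdot)\to c(\varphi)$. Take $\varphi$ to be the indicator of the zero state: $\psi(t,n)=0$ for all $t$ if $n$ has one channel, whereas $\psi(t,n)$ tends to a positive limit if $n$ has none. No Lyapunov refinement or Doeblin estimate can close this. Your unproved comparability of time averages and the ``weighted functional along reaction paths'' are a second gap with the same origin.

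\textbf{What is needed, and how your route compares.} The statement needs an additional hypothesis, most simply $\Omega_c=\Omega$; more generally, one that excludes such invariants and lets every species be drained into $\Omega_c$. The obstacle you flagged is real, and the paper does not handle it either:
\begin{itemize}
\item The last step of \eqref{invariant region} bounds $\sum_{k\in\Omega_c}\beta_k n_k\, m^Te_k$ from below by $\delta\, m^Tn$, which holds only if $\Omega_c=\Omega$.
\item The subsolution $\Psi_\varepsilon$ built on $|M(n)|$ and the use of the strong maximum principle on $B_{\overline R}$ tacitly require that the sinks dissipate all of $|M(n)|$ and that the chain be irreducible.
\end{itemize}

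Under $\Omega_c=\Omega$ your plan is correct and takes a different route from the paper. The drift bound $\mathcal L_r[V]+\mathcal L_s[V]\le C-cV$ with $V=m^Tn$ gives the uniform moment bound and positive recurrence. Irreducibility through the state $0$ is immediate. Krylov--Bogoliubov together with the ergodic theorem for non-explosive irreducible positive recurrent chains then gives existence, uniqueness and convergence, even in total variation. The paper instead uses truncation, a fixed point on the weak-$\ast$ compact set $\chi_{\gamma\delta}$, and explicit super- and subsolutions of the dual equation. Your argument avoids those barrier constructions and yields a stronger mode of convergence.
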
 

Notice that, as proven in Theorem \ref{cor:sources break detailed balance}, the system obtained adding sources and sinks to a chemical system that satisfies the detailed balance property does not, in general, satisfy the detailed balance property. As a consequence the stationary solution $ \overline f $ is a non-equilibrium stationary solution that cannot be written as an exponential of the Gibbs free energy. This is in contrast with what we will see for equilibrium steady states that can be written as exponentials of the Gibbs free energy, see \eqref{steady state when detailed balance holds}.

\section{Lack of detailed balance induced by the sources and sinks} \label{sec:lack of detailed balance when sources and sinks}
In this section we show that if we add sources/sinks to a chemical system that satisfies the detailed balance property, then we obtain a chemical system that does not satisfy the property of detailed balance, unless the rates of the sources and of the sinks are selected in a very specific way or if the chemical system that we obtain by adding the  sinks and sources do not have cycles. We refer to Theorem \ref{cor:sources break detailed balance} for the precise statement. 
In this section we will consider only mass action systems.

We now summarize some important known characterizations of chemical systems and the stationary solutions to the master equation of chemical systems that satisfy the detailed balance property. 
\begin{lemma}
The following statements hold for mass action systems. 
\begin{enumerate} 
\item A chemical system $(\Omega, \mathcal R , \mathcal R_s , \mathcal K, \mathcal K_s )$ satisfies the detailed balance property  if and only if 
\begin{equation} \label{Weg}
\prod_{ \rho =1  }^r  \left( \frac{K_\rho }{ K_{-\rho} } \right)^{w(\rho)} \prod_{ k \in \Omega_c   }^r  \left( \frac{\alpha_k }{\beta_k  } \right)^{w(k )}  =1, \quad  \forall w \in \mathcal W.
\end{equation}
\item 
If the detailed balance condition holds at the steady state $f_s \in \mathcal P ( \mathbb N^M )  $, then 
    \begin{equation} \label{steady state when detailed balance holds}
         f_s(n)= \frac{1}{Z} \prod_{k=1}^M   \frac{ e^{ - E_k n_k } }{n_k ! } =\frac{1}{Z}  \frac{ e^{ - E^T  n } }{n ! }, \quad n \in \mathbb N^M 
     \end{equation}
     where $E \in \mathbb R^M $ is a solution to the system of equations
         \begin{equation} \label{eq:energies}
            S_\rho^T  E = \ln \left( \dfrac{ K_{ - \rho} }{ K_{  \rho } }  \right)  \ \forall \rho \in \{ 1, \dots, r \}, \quad     e_k^T  E = \ln \left( \dfrac{\beta_k  }{ \alpha_k  }  \right)  , \quad \forall k  \in \Omega_c. 
        \end{equation}
     and $Z:= \prod_{\ell =1}^M e^{e^{- E_\ell} }$.
     \item 
     If detailed balance holds, then every steady state $\overline f $ to equation \eqref{eq:master with sources}, i.e. every solution to equation \eqref{eq:master with sources_stst}, has the form \eqref{steady state when detailed balance holds} for a solution $E \in \mathbb R^M $ to the system of equations \eqref{eq:energies}. 
     \end{enumerate}
    \end{lemma}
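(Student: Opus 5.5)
The plan is to prove the three statements in the order 2, then 1, then 3, all for mass action kinetics.

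\textbf{Statement 2: sufficiency of \eqref{eq:energies}.} Suppose $E$ solves \eqref{eq:energies}, and define $f_s$ by \eqref{steady state when detailed balance holds}. We check the detailed balance relations directly. For a reaction $\rho$, put $m:=n+S_\rho$. Since $\nu_\rho^T\nu_{-\rho}=0$, we have $S_\rho=\nu_{-\rho}-\nu_\rho$ and $m-\nu_{-\rho}=n-\nu_\rho$. Therefore
\[
R_\rho(n)f_s(n)=\frac{K_\rho}{Z}\,\frac{e^{-E^Tn}}{(n-\nu_\rho)!},\qquad
R_{-\rho}(m)f_s(m)=\frac{K_{-\rho}}{Z}\,\frac{e^{-E^Tn-E^TS_\rho}}{(n-\nu_\rho)!}.
\]
These two expressions agree exactly when $S_\rho^TE=\ln(K_{-\rho}/K_\rho)$. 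The sources and sinks are handled the same way: $\alpha_k f_s(n)=\beta_k(n_k+1)f_s(n+e_k)$ holds if and only if $e^{-E_k}=\alpha_k/\beta_k$. Finally, $f_s$ is a product of Poisson distributions with means $e^{-E_k}$, so it is normalized by $Z=\prod_\ell e^{e^{-E_\ell}}$.

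\textbf{Statement 1: the cycle (Wegscheider) criterion.} Write $\kappa_\rho:=\ln(K_{-\rho}/K_\rho)$ and $\kappa_k:=\ln(\beta_k/\alpha_k)$. The linear system \eqref{eq:energies} reads $\mathbb S^TE=\kappa$. By the Fredholm alternative it is solvable if and only if $\kappa\perp\ker\mathbb S=\mathcal W$, and this orthogonality is exactly \eqref{Weg}. Combined with Statement 2, this shows that \eqref{Weg} implies detailed balance.

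For the converse, assume detailed balance holds with some $f_s\in\mathcal P(\mathbb N^M)$. The first step is to show that $f_s>0$ everywhere. Starting from any state with $f_s>0$, the sink relations let us descend to $0$; the source relations then give $f_s(e_k)>0$ for $k\in\Omega_c$, and the reaction relations propagate positivity along every admissible transition. In this way positivity spreads over the communicating class of $0$, and we restrict attention to that class.

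Next, set $h(n):=\ln(f_s(n)\,n!)$. The detailed balance identities say that $h(n+S_\rho)-h(n)=-\kappa_\rho$ whenever $n\ge\nu_\rho$, and that $h(n+e_k)-h(n)=-\kappa_k$. Now take $w\in\mathcal W$ with integer entries (rational entries suffice after scaling). Choose a base state $n$ large enough that all the reactions in $w$ can be fired in some order without any coordinate leaving $\mathbb N^M$. Firing them yields a closed path, so summing the increments of $h$ around it gives $\sum w\cdot\kappa=0$, which is \eqref{Weg}.

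\textbf{Statement 3: uniqueness of the steady state.} Assume detailed balance, and let $\overline f$ be any solution of \eqref{eq:master with sources_stst}. By Statement 1 there is an $f_s$ of the form \eqref{steady state when detailed balance holds}. The aim is to show $\overline f=f_s$. The route is to show that the chain is irreducible on the class reachable from $0$ and positive recurrent. Positive recurrence follows because $f_s$ is a summable invariant measure and the chain is non-explosive; non-explosivity is quoted from \cite{maas2020modeling}, or from Theorem \ref{thm:existence and uniqueness of time dep} in the conservative case. Uniqueness of the invariant probability then follows from standard Markov chain theory \cite{norris1998markov}.

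For irreducibility, we argue as follows. From any $n$, the sinks on $\Omega_c$ remove molecules, but species outside $\Omega_c$ may be stuck. If some species cannot be drained, the statement must be read on each communicating class. On each such class the restricted Poisson form \eqref{steady state when detailed balance holds} is the unique invariant law, with $E$ ranging over the solution set of \eqref{eq:energies}; this accounts for the phrase ``for a solution $E$''.

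\textbf{Main obstacle.} I expect the hardest step to be the positivity and irreducibility bookkeeping in Statements 1 and 3. This means ensuring that cycles can actually be realized by paths inside $\mathbb N^M$ with enough molecules present, and controlling classes from which no path leads back to $0$. I would handle both by choosing the base state with all coordinates large and using the sinks to connect back down.
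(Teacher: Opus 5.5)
Your proposal has genuine gaps. First, the paragraph you label Statement 2 proves the converse of Statement 2. It shows that the product measure built from a solution $E$ of \eqref{eq:energies} is detailed balanced. That is correct, and it is exactly what the paper uses for the ``if'' half of 1. But Statement 2 asserts that an \emph{arbitrary} detailed balanced $f_s$ must have the form \eqref{steady state when detailed balance holds}. You never prove this; you only delegate it implicitly to your irreducibility argument for 3.

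Second, the positivity step in 1 ($\Rightarrow$) is false. Sinks only lower coordinates in $\Omega_c$, so species outside $\Omega_c$ that no reaction turns into $\Omega_c$-species can never be drained. An example is the channel states of Section \ref{sec:example 1}, whose total $|n_X|$ is preserved by every reaction and every source/sink. Then $f_s$ may vanish on the class of $0$, and that class need not contain any state from which the reactions of $w$ can fire.

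No choice of base state repairs this, because the claims themselves fail there. In the model of Section \ref{sec:example 1}, take $f_s(n,N,n_X)=\mathds{1}_{\{n_X=0\}}\,e^{-\alpha_1/\beta_1-\alpha_2/\beta_2}(\alpha_1/\beta_1)^n(\alpha_2/\beta_2)^N/(n!\,N!)$. Every reaction there has a channel state among its reactants, so under \eqref{mass action reactions} both sides of each reaction relation vanish. The source/sink relations hold by the Poisson form. Thus detailed balance holds for all rates, while \eqref{Weg} fails for the cycle $w=(1,1,-1,1)$ for generic rates. Moreover, this $f_s$ is a steady state that does not charge all of $\mathbb N^M$, whereas every measure of the form \eqref{steady state when detailed balance holds} does, so 2 and 3 fail as well.

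Your reading of 3 does not rescue it. Moving $E$ within the solution set of \eqref{eq:energies} means adding some $m$ with $m^TS_\rho=0$ for all $\rho$ and $m(k)=0$ for $k\in\Omega_c$. This only reweights the communicating classes by the factor $e^{-m^Tn}$ and keeps full support. So the class-restricted laws you describe are never of the stated form.

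What is missing is an additional hypothesis, which you must state and use.
\begin{itemize}
\item If $\Omega_c=\Omega$, the source/sink relations alone give $f_s(n)=f_s(0)\prod_k(\alpha_k/\beta_k)^{n_k}/n_k!$. This is 2 with $e^{-E_k}=\alpha_k/\beta_k$. Inserting it into the reaction relations at any $n\ge\nu_\rho$ gives $S_\rho^TE=\ln(K_{-\rho}/K_\rho)$. So $E$ solves \eqref{eq:energies}, and 1 ($\Rightarrow$) follows from your Fredholm remark without any path bookkeeping. Moreover, $\mathbb N^M$ is then a single communicating class, so your argument for 3 is valid: non-explosion plus a summable invariant measure give positive recurrence, hence uniqueness of probability solutions of \eqref{eq:master with sources_stst}.
\item If you only assume $f_s>0$ on all of $\mathbb N^M$, your telescoping of $h$ along an integer cycle fired from a large base state does prove 1 ($\Rightarrow$). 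But 2 and 3 still require irreducibility.
\end{itemize}

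For comparison, the paper's proof of 1 ($\Rightarrow$) has the same hidden assumption. It uses $g(n+S_\rho)/g(n)=K_\rho/K_{-\rho}$ for all $n$, which presupposes $g>0$ and $n\ge\nu_\rho$. It defers 2 and 3 to \cite{anderson2010product}, where product-form uniqueness is stated on each irreducible class. You correctly sensed the problem, but the repair you propose does not work.
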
 
    \begin{proof}
    We start by proving 1. 
        First of all notice that the condition \eqref{Weg} holds if and only if there exists a vector $E \in \mathbb R^{ M } $ that is satisfies \eqref{eq:energies}. 
   If  we define $f_s $ as \eqref{steady state when detailed balance holds} then we have that 
     \begin{align*}
     R_\rho (n) f_s (n) &= \dfrac{ K_\rho  }{Z (n- \nu_\rho) !  } e^{ - E^T n } = \dfrac{ K_\rho  }{Z (n- \nu_\rho) !  } e^{ - E^T n } = \dfrac{ K_{ - \rho} }{Z (n- \nu_\rho) !  } e^{ - E^T (n+ S_\rho )  } \\
     & = R_{ - \rho  } ( n + S_\rho ) f_s( n+ S_\rho), \quad  \forall \rho \in \{ 1, \dots, r \} . 
     \end{align*}
     Similarly 
        \begin{align*}
     A_k (n) f_s (n) &=  \dfrac{ \alpha_k  }{ Z n !  } e^{ - E^T n }  = \dfrac{ \beta_k  }{Z n  !  } e^{ - E^T (n+ e_k)  } = B_k (n) f_s(n + e_k ), \quad \forall k \in \Omega_c. 
     \end{align*}
     Hence \eqref{Weg} implies that detailed balance holds. 

     On the other hand if we assume that detailed balance holds we have that $ \forall \rho \in \{ 1, \dots, r \} $ and for all $ k \in \Omega_c $ 
     \begin{equation} \label{eq: g }
\dfrac{K_{ \rho }}{ K_{- \rho } }   = \dfrac{g (n + S_\rho ) }{ g (n) },  \quad \dfrac{\alpha_k }{ \beta_k  }   = \dfrac{g (n + e_k ) }{ g (n) } \quad  \forall n \in \mathbb  N^M 
     \end{equation}
     where $ g (n) = n ! f_s (n) $. 

     Assume that $w \in \mathcal W $. Then, using \eqref{eq: g } and the definition of cycles we deduce that
     \begin{align*}
         g (0) &= g \left(  \sum_{\rho=1}^{r } w(\rho) S_\rho + \sum_{ k \in \Omega_c } w(k ) \right) =   \left( \dfrac{K_r }{ K_{ - r } }  \right)^{ w (r)} g \left(  \sum_{\rho=1}^{r -1  } w(\rho) S_\rho + \sum_{ k \in \Omega_c } w(k ) \right) \\
         & = \prod_{ \rho =1 }^r  \left( \dfrac{K_\rho }{ K_{ - \rho} }  \right)^{ w (r)} g \left(  \sum_{ k \in \Omega_c } w(k ) \right) = \prod_{ \rho =1 }^r  \left( \dfrac{K_\rho }{ K_{ - \rho} }  \right)^{ w (r)} \prod_{ k \in \Omega_c }  \left( \dfrac{ \alpha_k  }{ \beta_k }  \right)^{ w (k )} g (0) 
     \end{align*}
     Therefore \eqref{Weg} holds. 
    The proof of point 2. and 3. can be found in \cite{anderson2010product}, Theorem 4.2. 
    \end{proof}
The condition \eqref{Weg} is usually called \textit{Wegscheider criterion} (see \cite{wegscheider1902simultane}) or Kolmogorov's criterion. 
\begin{remark}
    Notice that the solution to the system of equations \eqref{eq:energies} is not unique. Indeed assume that $E \in \mathbb N^M  $ is a solution, then also the vector $E_m := E+ m $  where $ m \in \mathcal M $ is a solution. 
\end{remark}

\begin{remark}
The normalization constant $Z = \exp \left( - \sum_{ k \in \Omega } E_k \right)  $  is needed in order to have that 
\[ 
\sum_{ n \in \mathbb N^M }  f_s (n)= 1. 
\] 
\end{remark}

Let denote with $\mathcal B_{\mathcal W} $ the basis of the space of the cycles. 
Motivated by the Wegscheider criterion \eqref{Weg} we define the parameters $\Delta(w)  $ associated to the cycles $w \in  \mathcal B_{\mathcal W} $ as 
\begin{equation} \label{parameter delta}
    \Delta (w):=\sum_{ \rho=1  }^r  w (\rho) \log \left( \frac{K_\rho }{ K_{-\rho} } \right)
\end{equation}
These parameters measure the lack of detailed balance along the cycles of the system. Detailed balance holds if and only if $ \Delta (w )=0$ for every $ w \in \mathcal B_{ \mathcal W } $. 

Our goal is to show that adding sources and sinks to a kinetic system $(\Omega, \mathcal R,  \mathcal K) = (\Omega, \mathcal R,  \emptyset , \mathcal K, \emptyset)$ that satisfies the detailed balance property, will break the detailed balance property unless the sources and sinks rates are fine tuned or the sources and sinks, as well as the chemical reactions, satisfy a topological condition. In order to formulate this topological condition we compare the cycles of the system with and without sources and sinks.   
\begin{lemma}
   Assume that $(\Omega, \mathcal R,  \mathcal R_s , \mathcal K, \mathcal K_s )$ is a chemical system with sources and sinks. Assume that the chemical reactions $ \mathcal R $ are conservative. 
   Assume that $\mathcal W $ is the space of the cycles associated with $(\Omega, \mathcal R,  \mathcal R_s , \mathcal K, \mathcal K_s )$. 
On the other hand, assume that $\mathcal W_r$ is the space of the cycles associated with $(\Omega, \mathcal R,  \mathcal K) = (\Omega, \mathcal R,  \emptyset , \mathcal K, \emptyset)$. 
 Let $w \in \mathcal W_r$.  Then we have that the vector  $w^r =(w^T, 0 )^T  \in \mathcal  W$. 
\end{lemma}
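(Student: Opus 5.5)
The plan is to unwind Definition \ref{def:cycles} for the two systems and to observe that the added coordinates of $w^r$, which are indexed by the sources and sinks, contribute nothing to the defining linear relation.

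First, I will recall that $\mathcal W_r$ is the space of cycles of $(\Omega, \mathcal R, \emptyset, \mathcal K, \emptyset)$. Since this system has $\Omega_c = \emptyset$, Definition \ref{def:cycles} gives
\[
\mathcal W_r = \left\{ w \in \mathbb R^{r} : \sum_{\rho=1}^r w(\rho) S_\rho = 0 \right\} = \ker \mathbb S_r .
\]
The space $\mathcal W$, by contrast, consists of vectors in $\mathbb R^{r+|\Omega_c|}$ annihilated by the extended stoichiometric matrix \eqref{stoic matrix ext}. Explicitly, $v \in \mathcal W$ if and only if
\[
\sum_{\rho=1}^r v(\rho) S_\rho + \sum_{k \in \Omega_c} v(k) e_k = 0 .
\]
Here I read the term $w(k)$ in Definition \ref{def:cycles} as $w(k) e_k$, which is the meaning consistent with \eqref{stoic matrix ext}.

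Next, given $w \in \mathcal W_r$, I set $w^r = (w^T, 0)^T \in \mathbb R^{r+|\Omega_c|}$. Thus $w^r(\rho) = w(\rho)$ for $\rho \in \{1, \dots, r\}$ and $w^r(k) = 0$ for $k \in \Omega_c$. Substituting into the defining relation of $\mathcal W$ gives
\[
\sum_{\rho=1}^r w(\rho) S_\rho + \sum_{k \in \Omega_c} 0 \cdot e_k = \mathbb S_r w = 0 ,
\]
where the last equality holds because $w \in \mathcal W_r$. Hence $w^r \in \mathcal W$, and the map $w \mapsto w^r$ is an injective linear embedding of $\mathcal W_r$ into $\mathcal W$.

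This argument does not use the conservativity assumption at all; that hypothesis presumably matters only for later comparisons such as \eqref{ineq cycles}. The only point needing care, rather than any real obstacle, is fixing the notation: the coordinates of $\mathbb R^{r+|\Omega_c|}$ must be ordered consistently with \eqref{stoic matrix ext}, with the reactions first and then the sources indexed by $\Omega_c$, and the source contribution must be interpreted as the vector $w(k) e_k$.
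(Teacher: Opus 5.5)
Your proposal is correct and takes the same approach as the paper, whose entire proof is ``It follows from the definition of cycles''. You carry out that unwinding explicitly, and you are right that conservativity plays no role and that the source term in Definition \ref{def:cycles} should be read as $w(k)e_k$.
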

\begin{proof}
    It follows from the definition of cycles. 
\end{proof}

\begin{remark}
The lemma above implies that $|\mathcal W | \geq |\mathcal W_r |. $ 
We stress that we can have situations in which $ |\mathcal W_r | >  |\mathcal W | $. See for instance the system in Section \ref{sec:example 1} and the system in Section \ref{sec:example 2}. 
\end{remark}

\begin{example} \label{example: no additional cycles}
We now present a simple example of chemical systems to which we add sources and sinks in such a way that $|\mathcal W | =  |\mathcal W_r |. $ 
Consider the chemical system 
\begin{equation} \label{simple cycle}
(1 ) \leftrightarrows (2)  \leftrightarrows (3)  \leftrightarrows (1). 
\end{equation}
Clearly this chemical system has a cycle $ w=(1,1,1)^T $.
We now consider the chemical system with a source and sink of chemicals of type $(1) $, i.e.  
\begin{equation} \label{simple cycle with one source}
(1 ) \leftrightarrows (2)  \leftrightarrows (3)  \leftrightarrows (1)\leftrightarrows \emptyset. 
\end{equation}
All the cycles of this system are of the form $ (w, 0)^T  $ for some cycle $ w $ of the system without sources and sinks, i.e. \eqref{simple cycle}. 
\end{example}

In the following proposition we compute the parameter $ \Delta $, defined as in  \eqref{parameter delta}, measuring the amount of lack of detailed balance in a chemical system $(\Omega, \mathcal R , \mathcal R_s ,  \mathcal K, \mathcal K_s )$ that is obtained adding sources and sinks to a chemical system $(\Omega, \mathcal R , \emptyset ,  \mathcal K, \emptyset )$ that satisfies the detailed balance condition. 
\begin{proposition} \label{prop:lack of detailed balance}
    Assume that $(\Omega, \mathcal R , \mathcal R_s ,  \mathcal K, \mathcal K_s )$ is a chemical system with sources and sinks. Assume that the system $(\Omega , \mathcal R, \mathcal K ) =(\Omega, \mathcal R , \emptyset ,  \mathcal K, \emptyset ) $ without sources and sinks satisfies the detailed balance condition.  Let $\mathcal B_{\mathcal W_r} = \{ w_k \}_{k =1}^{|\mathcal W_r |} $ be a basis for the space of the cycles of $(\Omega , \mathcal R, \mathcal K )$ and let $\mathcal B_{ \mathcal W} =  \{ w_k \}_{k =1}^{|\mathcal W |} $ be a basis of the space of the cycles of  $(\Omega, \mathcal R , \mathcal R_s ,  \mathcal K, \mathcal K_s )$. Assume that 
 \begin{equation} \label{ineq cycles}
 |\mathcal W| >  |\mathcal W_r |. 
 \end{equation}
 Then for every $w \in \mathcal W$ such that $ \pi_{ \mathcal R } w \notin \mathcal W_r $, it holds that 
 \begin{equation} \label{eq:lack of DB Delta}
 \Delta (w)= (E- D)^T \mathbb S_r \pi_{\mathcal R }  w
 \end{equation}
where $E \in \mathbb R^M $ is a solution to \begin{equation} \label{energy reactions}
 S_\rho^T  E = \ln \left( \dfrac{ K_{ - \rho} }{ K_{  \rho } }  \right)  \ \forall \rho \in \{ 1, \dots, r \}   
\end{equation} 
and where $D \in \mathbb R^M $ is the vector 
\[
D(k)= \begin{cases} &\log \left( \frac{\alpha_k }{ \beta_k } \right), \quad k \in \{ 1, \dots, |\Omega_c | \}   \\
& 0, \quad   k \in \{|\Omega_c | +1 , \dots , M \}. 
\end{cases}
\]
\end{proposition}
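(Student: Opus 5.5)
The plan is a direct computation combining three ingredients: the existence of the energy vector $E$, the definition of a cycle in $\mathcal W$, and the definition \eqref{parameter delta} of $\Delta$. Since a cycle of the extended system also involves the source/sink rates, I will read $\Delta(w)$ for $w \in \mathcal W$ as the full Wegscheider exponent suggested by \eqref{Weg}:
\[
\Delta(w)=\sum_{\rho=1}^r w(\rho)\log\left(\frac{K_\rho}{K_{-\rho}}\right)+\sum_{k\in\Omega_c} w(k)\log\left(\frac{\alpha_k}{\beta_k}\right).
\]
So the task is to rewrite each of the two sums as a linear functional of the vector $\mathbb S_r \pi_{\mathcal R} w \in \mathbb R^M$. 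Throughout, as in the definition of $D$, I identify $\Omega_c$ with $\{1,\dots,|\Omega_c|\}$.

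\textbf{Step 1 (existence of $E$).} The system $(\Omega,\mathcal R,\mathcal K)$ satisfies detailed balance by assumption. By point 1 of the lemma characterising detailed balance (the Wegscheider criterion \eqref{Weg} applied with no sources), this is equivalent to the existence of $E\in\mathbb R^M$ solving \eqref{energy reactions}. Fix such an $E$. Then $\log(K_\rho/K_{-\rho})=-S_\rho^T E$ for every $\rho$, and hence
\[
\sum_\rho w(\rho)\log\left(\frac{K_\rho}{K_{-\rho}}\right)=-E^T\,\mathbb S_r\pi_{\mathcal R}w .
\]

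\textbf{Step 2 (cycle relation).} Next I would use $w\in\mathcal W$, that is,
\[
\mathbb S_r\pi_{\mathcal R}w+\sum_{k\in\Omega_c}w(k)e_k=0 .
\]
Reading this componentwise gives two facts:
\begin{itemize}
\item $(\mathbb S_r\pi_{\mathcal R}w)(k)=0$ for $k\notin\Omega_c$;
\item $w(k)=-(\mathbb S_r\pi_{\mathcal R}w)(k)$ for $k\in\Omega_c$.
\end{itemize}
Because $D$ vanishes off $\Omega_c$, the source/sink sum becomes
\[
\sum_{k\in\Omega_c}w(k)D(k)=-D^T\mathbb S_r\pi_{\mathcal R}w .
\]
Adding the results of Steps 1 and 2 expresses $\Delta(w)$ as a single linear combination of $E$ and $D$ paired with $\mathbb S_r\pi_{\mathcal R}w$. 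The hypothesis $\pi_{\mathcal R}w\notin\mathcal W_r$ guarantees that $\mathbb S_r\pi_{\mathcal R}w\neq 0$, so the formula is nontrivial. If instead $\pi_{\mathcal R}w\in\mathcal W_r$, both sides vanish by detailed balance of the reaction subsystem.

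\textbf{Step 3 (signs and well-definedness).} The only delicate point is bookkeeping. The raw computation above gives $-(E+D)^T\mathbb S_r\pi_{\mathcal R}w$, because $E$ is normalised via $\ln(K_{-\rho}/K_\rho)$ while $D$ uses $\log(\alpha_k/\beta_k)$. To land on the stated form $(E-D)^T\mathbb S_r\pi_{\mathcal R}w$, I would check the orientation conventions in three places:
\begin{itemize}
\item the sign of $\log(K_\rho/K_{-\rho})$ versus $\ln(K_{-\rho}/K_\rho)$ in \eqref{parameter delta} and \eqref{energy reactions};
\item the orientation of the source columns in $\mathbb S$;
\item the convention \eqref{eq:energies}, where $e_k^TE=\ln(\beta_k/\alpha_k)$, so that the ``equilibrium'' value of $D$ is $-E$ restricted to $\Omega_c$.
\end{itemize}
I expect this matching of sign conventions to be the main obstacle; the algebra itself is short. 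As a consistency check, the formula must make $\Delta(w)=0$ exactly when $D$ agrees with the equilibrium value dictated by $E$ on $\Omega_c^{\mathcal W}$, consistent with Theorem \ref{cor:sources break detailed balance}.

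Finally, I would verify that the right-hand side does not depend on the choice of $E$. By the remark after the lemma, $E$ is determined only up to addition of $m\in\mathcal M$. Since $m\perp S_\rho$ for every $\rho$, we have $m^T\mathbb S_r\pi_{\mathcal R}w=0$, so the expression is well defined.
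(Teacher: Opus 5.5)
Your Steps 1 and 2 are the paper's argument: use the energy vector $E$ of the reaction subsystem, then trade $\pi_{\mathcal C}w$ for $-\mathbb S_r\pi_{\mathcal R}w$ through the cycle relation. The outcome you obtain, $\Delta(w)=-(E+D)^T\mathbb S_r\pi_{\mathcal R}w$, is the correct one. The gap is Step 3: there is nothing left to reconcile, because the statement fixes every convention. It sets $S_\rho^TE=\ln(K_{-\rho}/K_\rho)$ and $D(k)=\log(\alpha_k/\beta_k)$, builds $\Delta$ from $\log(K_\rho/K_{-\rho})$, and uses source columns $e_k$. That last orientation cannot be flipped independently, since it is tied to pairing $w(k)$ with $\alpha_k/\beta_k$ in the Wegscheider product. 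Under these conventions the stated identity is false. With $x=\mathbb S_r\pi_{\mathcal R}w$ one has $(E-D)^Tx-\bigl(-(E+D)^Tx\bigr)=2E^Tx=2\sum_\rho w(\rho)\ln(K_{-\rho}/K_\rho)$, which is generically nonzero. A concrete instance satisfying all hypotheses is the single reaction $(1)\rightleftarrows(2)$ with sources and sinks for both species. There $\mathcal W_r=\{0\}$, $w=(1,1,-1)\in\mathcal W$, $x=(-1,1)^T$, and $\Delta(w)-(E-D)^Tx=2\ln(K_1/K_{-1})\neq 0$ whenever $K_1\neq K_{-1}$. What can be proved is $\Delta(w)=-(E+D)^T\mathbb S_r\pi_{\mathcal R}w$. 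Equivalently, the stated form $(E'-D)^T\mathbb S_r\pi_{\mathcal R}w$ holds with $E'=-E$, i.e.\ $E'$ solving $S_\rho^TE'=\ln(K_\rho/K_{-\rho})$.

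The paper's proof reaches $(E-D)$ only through a silent sign switch. It replaces $\sum_\rho w(\rho)\log(K_\rho/K_{-\rho})$ by $\sum_\rho w(\rho)S_\rho^TE$, which contradicts the defining equation of $E$. So instead of promising to match conventions, you should finish by stating the corrected identity and flagging the sign error. Your proposed consistency check also cuts the other way. Your own third bullet, from \eqref{eq:energies}, gives $E_k=\ln(\beta_k/\alpha_k)=-D(k)$ at equilibrium, and that is what makes $-(E+D)^Tx$ vanish. The condition $\log(\alpha_k/\beta_k)=E(k)$ in Theorem \ref{cor:sources break detailed balance} inherits the paper's sign slip, so it is not an independent test. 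Your other remarks are correct: the right-hand side does not depend on the choice of $E$ modulo $\mathcal M$. The identity also holds trivially when $\pi_{\mathcal R}w\in\mathcal W_r$, so the hypotheses $|\mathcal W|>|\mathcal W_r|$ and $\pi_{\mathcal R}w\notin\mathcal W_r$ are not really needed.
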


\begin{proof}
By the definition \eqref{parameter delta} of $\Delta (w) $ we have that 
\begin{equation} \label{for DB}
\Delta(w)= \sum_{\rho=1}^r w(\rho) \log \left(  \frac{ K_\rho }{ K_{- \rho }} \right)  + \sum_{k=1}^{|\Omega_c|} w(k+ r +1 ) \log \left(  \frac{ \alpha_k }{ \beta_k } \right),  \ \forall w \in \mathcal W . 
\end{equation}
Let $w \in \mathcal W$ be such that $ \pi_{ \mathcal R } w \notin \mathcal W_r $ where we recall that the projection $\pi_{\mathcal R} w $ is defined as \eqref{proj reactions}. 
 Then $\mathbb S w =0$ where $\mathbb S $ is defined as \eqref{stoic matrix ext}.
Let us denote  with $\pi_{ \mathcal C } w \in \mathbb R^{|\Omega_c | } $ the vector defined as $ \pi_{\mathcal C} w (j )= w(1+r + j ) $ for $j  =1,  \dots , |\Omega_c | $. 
By the assumptions on $w $ we have that $ \pi_{ \mathcal R } w \notin \mathcal W_r  $, hence $\mathbb S_r \pi_{\mathcal R } w \neq 0$. Since $\mathbb S w =0$ this implies that $\mathbb S_r \pi_{\mathcal R } w = - \textbf I_{M \times |\Omega_c|} \pi_{\mathcal C } w  \neq 0$. Moreover since $(\Omega , \mathcal R , \mathcal K )$ satisfies the detailed balance property there exists a solution $ E \in \mathbb R^M $ to  \eqref{energy reactions}. 
Then the equality \eqref{for DB} can be rewritten as 
\begin{align*}
  \Delta (w) & =  \sum_{\rho=1}^r w(\rho) \log \left(  \frac{ K_\rho }{ K_{- \rho }} \right)  + \sum_{k=1}^{|\Omega_c|} w(j+ r +1 ) \log \left(  \frac{ \alpha_k }{ \beta_k } \right) = \sum_{\rho=1}^r w(\rho) S_\rho^T E  + \sum_{k=1}^{|\Omega_c|} \pi_{\mathcal C } w(k) \log \left(  \frac{ \alpha_k }{ \beta_k } \right) \\
  &= \sum_{k \in \Omega} \sum_{\rho=1}^r w(\rho) S_\rho(k) E(k)   + \sum_{k=1}^{|\Omega_c|} \pi_{\mathcal C } w(k) \log \left(  \frac{ \alpha_k }{ \beta_k } \right) = E^T \mathbb S_r \pi_{\mathcal R } w + D^T \textbf{I}_{M \times |\Omega_c |} \pi_{\mathcal C } w \\
  & =(E- D)^T \mathbb S_r \pi_{\mathcal R }  w.  
\end{align*}
\end{proof}
We now prove that if we add sources and sinks to a chemical system $(\Omega, \mathcal R , \emptyset ,  \mathcal K, \emptyset )$ that satisfies the detailed balance condition, then the resulting chemical system with sources and sinks does not satisfy the detailed balance property, unless the chemical rates of the sources and sinks are chosen in a specific way or the addition of sources and sinks does not produce cycles in the system $(\Omega, \mathcal R , \emptyset ,  \mathcal K, \emptyset )$.

\begin{remark}
    We stress that if adding sources and sinks does not create new cycles, the detailed balance property of the system with sources and sinks is inherited by the system with sources and sinks. This is the case in Example \ref{example: no additional cycles}. System \eqref{simple cycle with one source} satisfies the detailed balance property if system \eqref{simple cycle} satisfies the detailed balance property. 
\end{remark}
\begin{proof} [ Proof of Theorem \ref{cor:sources break detailed balance} ] 
By the Wegscheider criterion we know that the chemical system $(\Omega, \mathcal R , \mathcal R_s, \mathcal K, \mathcal K_s )$ satisfies detailed balance if and only if $\Delta (w)=0$ for every $ w \in \mathcal W $. Notice that the fact that the system $(\Omega, \mathcal R, \mathcal K ) $ satisfies the detailed balance property implies that $\Delta (w)=0$ for every $ w \in \mathcal W$.  Hence the detailed balance property holds for the extended system if and only if 
\[ 
\Delta (w)=0, \  \forall w \in \mathcal W.  
\]
By Proposition \ref{prop:lack of detailed balance} we deduce that if \eqref{eq for the parameters} holds  then 
\begin{align*}
\Delta (w) = (E- D)^T \mathbb S_r \pi_{\mathcal R }  w =0 \  \forall w \in \mathcal  W. 
\end{align*}
Hence point 1. in Theorem \ref{cor:sources break detailed balance}  holds. 

We now prove the second point. 
Assume now that \eqref{eq for the parameters} does not hold and that $(\Omega, \mathcal R , \mathcal R_s, \mathcal K, \mathcal K_s )$ satisfies the detailed balance condition. 
Since  $|\mathcal W | > |\mathcal W_r |$ we know that there exists a $ \tilde w \in \mathcal W $ such that  $\mathbb S_r \pi_{\mathcal R} \tilde  w \neq 0 $. Therefore we can define $ \ell \in \Omega $ be such that  $\mathbb S_r \pi_{\mathcal R} \tilde w (\ell) \neq 0  $. Let $\delta >0 $  and let $E $ be a solution to \eqref{energy reactions}. 
We construct the perturbed rates as follows
\begin{align*}
K_{\rho}^\delta &= K_\rho, \quad K_{\rho}^\delta = K_{\rho} \exp \left(  \sum_{ k \in \Omega } E_\delta (k)  S_\rho (k) \right)
\end{align*}
where $E_\delta (k )=0 $ for every $ k \neq \ell $ and $E_\delta (\ell)\neq 0 $. Notice that $\tilde E= E + E_\delta $ satisfies \eqref{energy reactions} by construction.
The detailed balance property of $( \Omega, \mathcal R, \mathcal R_s, \mathcal K , \mathcal K_s) $ implies that $(  E - D )^T \mathbb S_r \pi_{\mathcal R} w =0$ for every $ w \in \mathcal W $. Therefore  
\begin{align*}
    ( \tilde E - D )^T \mathbb S_r \pi_{\mathcal R} \tilde w = E_\delta ^T \mathbb S_r \pi_{\mathcal R} \tilde w = E_\delta (\ell) (\mathbb S_r \pi_{\mathcal R} \tilde w) (\ell)  \neq 0. 
\end{align*}
We conclude that the perturbed system $( \Omega, \mathcal R, \mathcal R_s, \mathcal K , \mathcal K_s) $  does not satisfy the detailed balance property. Notice that we can select $E_\delta ( \ell) $ arbitrarily small, hence the point 2. of Theorem \ref{cor:sources break detailed balance} holds.

\end{proof}

\section{Existence of a unique time dependent solution to the master equation} \label{sec:existence}
In this section we prove that the master equation \eqref{eq:master with sources} has a unique weak time dependent solution if the chemical reactions are conservative.

 \subsection{Existence} 
 The goal for this section is to prove the existence of a time dependent solution to the master equation \eqref{eq:master with sources}, i.e. we aim at proving the following theorem. 
 \begin{theorem}\label{thm:existence of time dep}
Assume that $(\Omega, \mathcal R, \mathcal R_s, \mathcal K , \mathcal K_s) $ is a chemical system with sources and sinks and that the set of reactions $\mathcal R $ is conservative. Assume moreover that $f_0 \in \mathcal P (\mathbb N^M)$ is such that 
\[ 
\sum_{ n \in \mathbb N^M } |n| f_0(n) < \infty . 
\]
    There exists a weak solution $f:  [0, \infty ) \rightarrow \mathcal P (\mathbb N^M) $ to equation \eqref{eq:master with sources weak} in the sense of Definition \ref{def:solution master eq}.
 \end{theorem}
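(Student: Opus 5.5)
We construct the solution by truncation followed by a compactness argument; the key uniform estimate comes from the conservation law. Since $\mathcal R$ is conservative, Definition \ref{def:conservative} gives a vector $m\in\mathcal M_+$ with $m(k)>0$ for all $k\in\Omega$, hence $m^T S_\rho=0$ for every $\rho$ and $c_1|n|\le m^T n\le c_2|n|$. For $L\in\mathbb N$ consider the finite set $\Lambda_L:=\{n\in\mathbb N^M: m^T n\le L\}$. Because every reaction preserves $m^T n$, reactions never leave $\Lambda_L$. Only the sources can increase $m^T n$. We therefore define the truncated generator by suppressing the source jumps $n\mapsto n+e_k$ whenever $n+e_k\notin\Lambda_L$, i.e. we replace $A_k(n)$ by $A_k(n)\mathbf 1_{\{n+e_k\in\Lambda_L\}}$. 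The truncated master equation is a linear ODE on $\mathbb R^{\Lambda_L}$ with a $Q$-matrix generator. It therefore has a unique global solution $f_L(t,\cdot)\in\mathcal P(\Lambda_L)$ with initial datum $f_0\mathbf 1_{\Lambda_L}/\sum_{\Lambda_L}f_0$ (and $f_L\equiv0$ outside $\Lambda_L$).

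The uniform moment bound follows by testing the truncated weak equation with $\varphi(n)=m^T n$. The reaction part vanishes identically, because $\varphi(n\pm S_\rho)-\varphi(n)=\pm m^T S_\rho=0$. The source and sink part gives
\begin{align*}
\frac{d}{dt}\sum_n m^T n\, f_L(t,n)\le \sum_{k\in\Omega_c}\alpha_k m(k) - \sum_{k\in\Omega_c}\beta_k m(k)\sum_n n_k f_L(t,n).
\end{align*}
This alone yields only linear growth, $\sum_n m^Tn f_L\le C_0+Ct$, which suffices for compactness on finite time intervals. (If one wants the time-uniform bound \eqref{bound on mass intro}, one uses in addition that each species lies in a conserved quantity coupled to $\Omega_c$. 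That bound is not needed here.) The initial moment is controlled by the hypothesis $\sum|n|f_0<\infty$. Hence for every $T$ we have $\sup_L\sup_{t\le T}\sum_n |n| f_L(t,n)\le C_T$, which gives tightness of $\{f_L(t,\cdot)\}$ uniformly in $t\in[0,T]$.

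Next, for each fixed $n$, $|\partial_t f_L(t,n)|$ is bounded uniformly in $L$. This holds because the right-hand side of \eqref{eq:master with sources} at $n$ involves finitely many neighbours with fixed rates, and $f_L\le1$. By Arzelà--Ascoli and a diagonal argument over the countable set $\mathbb N^M$ and over $T\in\mathbb N$, a subsequence converges pointwise in $n$, locally uniformly in $t$, to some $f(t,n)$. Tightness upgrades this to convergence in $\ell^1$ for each $t$, so $f(t,\cdot)\in\mathcal P(\mathbb N^M)$. This is the step where the moment bound is essential, since it is what prevents loss of mass, i.e. explosion.

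Finally, we pass to the limit in the integrated form of \eqref{eq:master with sources weak}. For $\varphi\in c_{00}$, both $\mathcal L_r[\varphi]$ and $\mathcal L_s[\varphi]$ are supported on a finite set, so the sums are finite. The truncation does not affect them for $L$ large, and the limit is immediate. The limit equation in integrated form shows that $f(\cdot,n)$ is $C^1$, because its right-hand side is a finite combination of continuous functions. The initial condition also passes to the limit. The main obstacle is obtaining the moment bound with a truncation compatible with conservation; the choice of cutting via $m^T n$ is what makes the reaction terms drop out exactly.
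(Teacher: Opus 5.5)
Your proof is correct. Its overall architecture matches the paper's: truncate, test the truncated equation with $\varphi(n)=m^Tn$, apply Arzel\`a--Ascoli with a diagonal argument, pass to the limit against $c_{00}$ test functions, and use the first moment to exclude loss of mass. Three ingredients differ.

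\emph{Truncation.} The paper switches off $R_\rho$ and $A_k$ outside the box $I^M$, and then has to verify separately that $f_N$ stays supported in a slightly larger box. Your cut to the level sets $\Lambda_L=\{m^Tn\le L\}$ gives an invariant finite state space from the outset, so the truncated problem is a genuine finite $Q$-matrix system. The vanishing of the reaction terms when testing with $m^Tn$ is not specific to your cut, however: it holds for any truncation that merely switches rates off, including the paper's.

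\emph{Moment bound.} This is the more substantive difference. The paper derives a time-uniform moment bound by Gr\"onwall, using the step $\sum_{k\in\Omega_c}\beta_k m(k) n_k\ge \delta\, m^Tn$ with $\delta=\min_{k\in\Omega_c}\beta_k$. That inequality holds for all $n$ only if $\Omega_c=\Omega$; take $n=e_j$ with $j\notin\Omega_c$ to see it fail. It is violated, for instance, by the channel models of Section~\ref{sec:examples}, where the channel states have no sink. Your bound $\sum_n m^Tn\, f_L(t,n)\le C_0+t\sum_{k\in\Omega_c}\alpha_k m(k)$ uses only the sign of the sink contribution and is valid for every $\Omega_c$. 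It is exactly what tightness on $[0,T]$ needs, so for this existence statement your route is the more robust one. What it does not give is control as $t\to\infty$, which the paper relies on later for the invariant set $\chi_{\gamma\delta}$ in the proof of Theorem~\ref{thm:steady state}.

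\emph{Order of limits.} You upgrade to $\ell^1$ convergence via tightness before passing to the limit. The paper first passes to the weak-$*$ limit and recovers $\sum_n f(t,n)=1$ afterwards. The two orderings are equivalent.
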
 
In order to prove Theorem \ref{thm:existence of time dep} it is convenient to study a truncated problem in which the kinetics are supported in a finite set of states. 
We then prove the existence of a unique solution to the truncated problem and find suitable uniform estimates on the solution that allow to remove the truncation and to prove the existence of a solution to \eqref{eq:master with sources weak}. 
To this end we define the truncated kinetics as follows.

Let $N \in \mathbb N$. We define $I:=\{ 1, 2, \dots, N \} $. Then for every chemical reaction $S_\rho $ we define the truncated rate $R_\rho^N: \mathbb Z^M \rightarrow \mathbb R_* $ as
\begin{equation} \label{truncated rates}
R_\rho^N (n) :=\begin{cases} 
R_\rho (n) & \text{ if }  n \in I^M  \\
  0 &\text{ otherwise }
\end{cases}.
\end{equation}
Moreover we truncate the source rate as follows
\begin{equation} \label{truncated source}
    A_k^N  (n)  = \begin{cases} & \alpha_k, \text{ if } n \in I^M, \\
    & 0 \ \text{ otherwise }.
    \end{cases}
 \end{equation}

The first goal is to prove the existence of strong solutions $f_N$ to the truncated master equation 
\begin{align} \label{eq:master without sources truncated}
    \partial_t f_N (t, n) &= \sum_{ \rho=1   }^r  R^N_{-\rho}(n + S_\rho ) f_N (t,n+S_\rho ) +  \sum_{ \rho =1  }^r R_{\rho}^N ( n - S_\rho ) f_N(t, n-S_\rho )  \\
    & -f_N(t, n)  \sum_{ \rho =1}^r(R_\rho^N (n) + R^N_{- \rho} (n) ) +  \sum_{ k \in \Omega_c } [ A^N_k(n - e_k ) f_N( n - e_k ) + B_k(n+e_k ) f_N(t, n+e_k) ] \nonumber \\
    &- f_N( t, n  ) \sum_{ k \in \Omega_c } [ A^N_k(n) + B_k(n)  ]  \, \  t >0, \, \quad f(0, n   ) = f_0(n), \  n \in \mathbb N^M \nonumber. 
\end{align}
\begin{proposition} \label{prop:existence truncated} 
  Let $N >0 $ and let $R^N_\rho $ be given by \eqref{truncated rates} and $A^N_k $ to be given by  \eqref{truncated source}. Assume $B_k $ to be given by \eqref{mass action sink}. 
  Let the initial datum $f_0 \in \ell^1 (I^M) $ be such that $f_0(n) \geq 0$ for every $n \in I^M $ and 
$
  \sum_{n \in I^M } f_0(n) \leq 1. 
$
Then, there exists a unique solution $ f_N (t, n )  $ with $ f_N( \cdot , n  ) : [0, \infty) \mapsto \mathbb R_+ $ continuously differentiable for any $n \in \mathbb N^M $, which satisfies \eqref{eq:master without sources truncated} in the classical sense. Moreover it is such that
\begin{equation} \label{bound numbers truncated}
  \sum_{n \in \mathbb N^M } f_N(t, n ) =  \sum_{n \in I^M } f_0(n) \leq 1, \quad \forall t \geq 0
\end{equation} 
and 
\begin{equation} \label{compact support}
   f_N(t, n ) = 0 , \quad \forall n \in \overline I^N \quad  \forall t \geq 0
  \end{equation}
  where $\overline I := [0, 1+ N +  \max_{\rho=1}^r \max_{k \in \Omega } | S_\rho (k) |  ]$. 
\end{proposition}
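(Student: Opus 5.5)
The plan is to reduce the truncated master equation \eqref{eq:master without sources truncated} to a finite-dimensional linear system of ODEs and apply standard linear ODE theory.

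\textbf{Step 1: the dynamics lives in a finite box.} All truncated rates $R^N_{\pm\rho}$ and $A^N_k$ vanish outside $I^M$, and the sink rates $B_k$ only decrease the state. Hence, starting from $n\in I^M$ (or from the region reachable from it), a jump can only land in a bounded set. Concretely, a reaction fired from $n\in I^M$ lands in $n\pm S_\rho$, whose coordinates are bounded by $N+\max_{\rho,k}|S_\rho(k)|$, and a source lands in $n+e_k$. From a state outside $I^M$ only sinks act, and these only decrease coordinates. Therefore the finite set
\[
Q:=\{n\in\mathbb N^M:\ n_k\le N+1+\max_{\rho}\max_{k}|S_\rho(k)|\ \ \forall k\}
\]
is invariant. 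I would make this precise in two moves. First, for $n\notin Q$, every gain term in \eqref{eq:master without sources truncated} has a coefficient that either vanishes or couples only to states outside $Q$ that lie coordinatewise above $n$ (via the sink term $B_k(n+e_k)f_N(n+e_k)$). Second, I would set $f_N(t,n)=0$ for $n\notin Q$ and check that this is consistent: the equations on $Q^c$ are then satisfied identically, so the equation decouples. I read \eqref{compact support} as the statement that $f_N$ vanishes outside the box $\overline I^M$.

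\textbf{Step 2: solve the finite system.} On $Q$ the system reads $\dot f=\mathcal A f$, with $\mathcal A$ a finite matrix whose off-diagonal entries are nonnegative rates. Its columns sum to zero, because every outflow from a state of $Q$ lands again in $Q$ by the invariance of Step 1. Linear ODE theory then gives a unique $C^1$ (indeed analytic) solution $f(t)=e^{t\mathcal A}f_0$, with $f_0$ extended by zero on $Q\setminus I^M$.

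\textbf{Step 3: uniqueness and properties.} Uniqueness in the class of all $C^1$ functions on $\mathbb N^M$ needs one extra argument, since the full system is infinite. I would use the triangular structure on $Q^c$ noted in Step 1: for $n\notin Q$ the equation is $\partial_t f(n)=\sum_k\beta_k(n_k+1)f(n+e_k)-f(n)\sum_k\beta_k n_k$, which feeds only from larger states. Descending induction is not directly available because the set is infinite. Instead, I would use that, for $f_N$ restricted to bounded-mass solutions with $f=0$ initially on $Q^c$, the total mass on $Q^c$ is nonincreasing: summing the equation over $Q^c$, the net flux into $Q^c$ from $Q$ is zero and the flux out through sinks is nonnegative. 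Since this mass starts at $0$, it stays $0$. Positivity follows because $\mathcal A+cI$ has nonnegative entries for $c$ large, so $e^{t\mathcal A}=e^{-ct}e^{t(\mathcal A+cI)}$ preserves nonnegativity. Conservation \eqref{bound numbers truncated} follows from the zero column sums.

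\textbf{Main obstacle.} The delicate point is this uniqueness on the infinite complement, where the sink term couples $n$ to $n+e_k$ without bound. The mass (summation) argument handles it, provided solutions are assumed summable, which is the natural class here.
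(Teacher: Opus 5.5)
Your proposal is correct and follows essentially the paper's route. Both arguments use summation over the complement of $\overline I^M$, where only sinks act, to force vanishing there; both solve a finite ODE system on the box; and both obtain conservation from the same index-shift computation, which you encode as zero column sums. Your use of linearity (a matrix exponential with nonnegative off-diagonal entries) in place of Picard--Lindel\"of plus extension also gives global existence and nonnegativity at once. The paper's complement argument tacitly needs that nonnegativity, together with summability, and you are right to make both explicit.
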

\begin{proof}[Proof of Proposition \ref{prop:existence truncated}]
    We start  proving that if $f_N $ is a solution to \eqref{eq:master without sources truncated}, then \eqref{compact support} holds. 
    In order to prove this, notice that
    \begin{align*}
    \sum_{n \notin \overline I^N } f_N (t, n) &=     \sum_{n \notin \overline I^N } \left[ \sum_{ \rho=1   }^r  R^N_{-\rho}(n + S_\rho ) \int_0^t  f_N (s,n+S_\rho ) \right] + \sum_{n \notin \overline I^N } \left[ \sum_{ \rho =1  }^r R_{\rho}^N ( n - S_\rho )\int_0^t  f_N(s, n-S_\rho ) \right]  \\
    & - \sum_{n \notin \overline I^N } \left[ \sum_{ \rho =1}^r(R_\rho^N (n) + R^N_{- \rho} (n) ) \int_0^t f_N(s, n) ds   \right] \\
    & + \sum_{n \notin \overline I^N } \sum_{ k \in \Omega_c } [ A^N_k(n) \int_0^t f_N( s, n - e_k ) ds + B_k (n+e_k)  \int_0^t f_N(s, n+e_k)  ds ] \\
    & - \sum_{n \notin \overline I^N } \sum_{ k \in \Omega_c } [ A^N_k(n) + B_k ( n )  ] \int_0^t f_N( s, n  ) ds \\
    & =      \sum_{n \notin \overline I^N } \sum_{ k \in \Omega_c } B_k(n+ e_k) \int_0^t f_N(s, n+e_k)  ds - \sum_{n \notin \overline I^N } \sum_{ k \in \Omega_c }  B_k(n)   \int_0^t f_N( s, n  ) ds \leq 0 .
    \end{align*}
Hence \eqref{compact support} holds since $f_0 \in \ell^1 (I^M) $. 
In order to prove the existence of a time dependent solution to equation \eqref{eq:master without sources truncated} we rewrite the equation \eqref{eq:master without sources truncated} as 
\[ 
\frac{d f_N(t, n ) }{dt} = T_n ( f_N ) , \quad  n \in \overline I^M 
\]
where $T_n :  \mathbb R_+^{ \overline I^M} \rightarrow \mathbb R_*  $ is defined as 
\begin{align*} 
T_n ( x ) :=&  \sum_{ \rho =1 }^r R^N_{-\rho}(n + S_\rho ) x(n+S_\rho ) +  \sum_{ \rho=1 }^r  R_{\rho}^N ( n - S_\rho ) x( n-S_\rho )  -x(n)  \sum_{ \rho =1}^r(R_\rho^N (n) + R^N_{- \rho} (n) ) \\ 
&   +  \sum_{ k \in \Omega_c } [ A^N_k(n - e_k ) x( n - e_k ) + B_k(n+e_k ) x( n+e_k) ] -  \sum_{ k \in \Omega_c } [ A^N_k(n) + B_k(n)  ] x (  n  )
\end{align*}
Notice that since the chemical rates $R_\rho^N$, $A^N_k$ and $B_k $ are bounded in $\overline I^M $ the functions $T_n $ are locally Lipschitz continuous. Using the Picard–Lindelöf theorem we deduce that there exists a unique  solution to \eqref{eq:master without sources truncated} on a time interval $ [0, T_*] $.

We now prove that \eqref{bound numbers truncated} holds. This is a consequence of  the definition of the truncated kernels $A^N_k $ and $R^N_\rho $ and the definition of $B_k $. Indeed we have that 
\begin{align*}
    \sum_{ n \in \overline I^M} \sum_{ \rho=1   }^r  R^N_{-\rho}(n + S_\rho ) f_N (t,n+S_\rho )  & =     \sum_{ \rho=1   }^r \sum_{\{ n \in \overline I^M :  n + S_\rho \in I^M \} }  R^N_{-\rho}(n + S_\rho ) f_N (t,n+S_\rho ) \\
     &= 
      \sum_{ \rho=1   }^r \sum_{  n  \in I^M  }  R^N_{-\rho}(n  ) f_N (t,n ). 
\end{align*}
As well as 
\begin{align*}
    \sum_{ n \in \overline I^M} \sum_{ \rho=1   }^r  R^N_{\rho}(n - S_\rho ) f_N (t,n-S_\rho )  = 
      \sum_{ \rho=1   }^r \sum_{  n  \in I^M  }  R^N_{\rho}(n  ) f_N (t,n )
\end{align*}
and 
\[
 \sum_{ n \in \overline I^M}  \sum_{ k \in \Omega_c } A^N_k(n - e_k ) f_N( n - e_k ) =  \sum_{ n \in \overline I^M}  \sum_{ k \in \Omega_c }  A^N_k(n)  f_N( t, n  ). 
\]
Finally we also know that by the definition of $B_k $ we have that 
\begin{align*}
    \sum_{ k \in \Omega_c }    \sum_{ n + e_k \in \overline I^M}  B_k(n+e_k ) f_N(t, n+e_k) &=   \sum_{  n \in \overline I^M }  \sum_{ k \in \Omega_c }   \beta_k (n_k+1) f_N(t, n+e_k)     \\
     &= \sum_{\{  n \in \mathbb N^M : n - e_k \in \overline I^M  \}  }  \sum_{ k \in \Omega_c }   \beta_k n_k f_N(t, n)  =    \sum_{ n  \in \overline I^M}  \sum_{ k \in \Omega_c } B_k(n)   f_N( t, n  ). 
\end{align*}
Summarizing we have that 
\begin{align*}
       \dfrac{d }{dt}\sum_{ n \in \overline I^M} f_N (t, n) &=  0  \nonumber. 
\end{align*}
Therefore \eqref{bound numbers truncated} holds. 
The bound \eqref{bound numbers truncated} also implies that the solution $f_N$ on the time interval $[0, T_*] $ can be extended to a global solution in time.   
\end{proof}
We now use Proposition \ref{prop:existence truncated} in order to prove the existence of a time dependent solution to \eqref{eq:master with sources weak}.

\begin{proof}[Proof of Theorem \ref{thm:existence of time dep}]
We divide the proof of the statement in three steps. As a first step we prove that the sequence of solutions to the truncated problems $ \{f_N \}_{ N \geq 1}  $ converge to a limit $f $ as $N \to \infty $. To this end we will use the uniform bound $ \| f_N (t, \cdot) \|_1 \leq 1   $ for every $N$ and for every $t \geq 0$. As a second step we will show that the limit $f$ satisfies equation \eqref{eq:master with sources weak}. As a third step we will use the fact that the chemical reactions are conservative to deduce that for every $t \geq 0 $
 we have that
 \begin{equation} \label{eq:bound on the mass}
 \sum_{  n \in \mathbb  N^M  } |n|  f(t, n) \leq C_0. 
 \end{equation}
This is the crucial step that allows us to prove that $ \| f(t, \cdot) \|_1 =1 $ for every $t \geq 0$.

\textit{Step 1. Existence of the limit.}
  Consider the initial condition $f_N(0,n ) = ( f_0 (n) )_{n \in I^M} $ for equation \eqref{eq:master without sources truncated}. 
Proposition \ref{prop:existence truncated} guarantees the existence of a sequence of solutions $ \{ f_N\}_{N \geq 1 } $ to equation \eqref{eq:master without sources truncated}.
Notice that for every $N \geq 1$ and every $T \geq 0 $ and every $n \in \mathbb N^M $ we have that the function $  t \mapsto f_N (t, n ) $ is continuous and satisfies the equation in weak form, i.e. it satisfies
    \begin{align} \label{eq:weak form master trunc}
    \sum_{  n \in \mathbb  N^M  } \varphi(n)  f_N (t, n) &= \sum_{  n \in \mathbb  N^M  } \varphi(n)  f_N (0,  n) \\
    &+ \sum_{ n  \in \mathbb  N^M } \mathcal L^N_r[ \varphi](n)  \int_0^t  f_N(s,n) ds  +  \sum_{ n  \in \mathbb  N^M } \mathcal L^N_s [ \varphi](n) \int_0^t  f_N(s,n) ds \nonumber
\end{align}
for every test function $\varphi \in c_{00}(\mathbb N^M )$. 
Here we use the notation
\begin{align} \label{truncated dual reaction}
\mathcal L^N_r   [\varphi] (n )  : =  \sum_{ \rho =1}^r  R^N_\rho(n) \left[ \varphi(n+ S_\rho) - \varphi(n)  \right] +   \sum_{ \rho =1}^r   R^N_{-\rho}(n) \left[ \varphi(n- S_\rho) - \varphi(n)  \right]  \nonumber \\
\end{align}
and
\begin{align} \label{truncated dual sources}
     \mathcal L^N_s  [\varphi] (n )  :=       \sum_{ k \in \Omega_c }  A^N_k (n)   \left[ \varphi(n+ e_k ) - \varphi(n)  \right] +    \sum_{ k \in \Omega_c } B_k (n)   \left[ \varphi(n-  e_k ) - \varphi(n)  \right]. 
\end{align}
We now want to prove that there exists a function $f:  [0, \infty ) \rightarrow \mathcal P (\mathbb N^M) $ such that $f(\cdot, n ) $ is continuous for every $n \in \mathbb N^M $ and such that for every $n \in \mathbb N^M $ it holds that $f_N(t, n )  \rightarrow f (t,n ) $  as $N \to \infty $ uniformly for $t \in [0, T] $. 

To this end we notice that the bound \eqref{bound numbers truncated} implies that the sequence of functions $\{ f_N (t) \}_{N \geq 1 } $ is uniformly bounded for every $t \in [0, T] $.
We now prove that the sequence $\{ f_N (t) \}_{N \geq 1 } $  is equicontinuous in the $\ast$-weak topology of $ \ell^1 (\mathbb N^M)$. This follows by the fact that for every $ \varphi \in c_{0 0} $ it holds that
    \begin{align*} 
    \sum_{  n \in \mathbb  N^M  } \varphi(n)  | f_N (t_1, n) - f_N(t_2, n) | & \leq  
     \sup_{ s \in [0, T] } \sup_{n \in \mathbb N^M } f_N(s,n) |t_1 - t_2 | \cdot \\
    & \cdot   \sum_{ n  \in \mathbb  N^M } \left[ \sum_{ \rho =1}^r  \left( R_\rho^N(n) \left[  \varphi(n+ S_\rho) - \varphi(n)  \right]  +   R^N_{-\rho}(n) \left[ \varphi(n- S_\rho) - \varphi(n)  \right] \right)  \right. \\
  &  \left. +  \sum_{ k \in \Omega_c }  A^N_k (n)   \left[ \varphi(n+ e_k ) - \varphi(n)  \right] +    \sum_{ k \in \Omega_c } B_k (n)   \left[ \varphi(n-  e_k ) - \varphi(n)  \right] \right]  \nonumber \\
    & \leq  |t_1 - t_2 | C_\varphi
\end{align*}
where the constant $C_\varphi$ does not depend on $N$. 
Using Ascoli-Arzelà theorem we deduce that, up to selecting a subsequence of $\{ f_N \}_{ N \geq 1 } $, there exists a $f \in C([0, T] , \ell^1_{\text{weak}} (\mathbb N^M) ) $ such that $f_N(t)   {\rightharpoonup} f (t) $ in the $\ast-$weak topology of $\ell^1 (\mathbb N^M )$ as $N \to \infty $ for every $t \in [0, T] $. 

By an iterative application of the Ascoli-Arzelà Theorem for a sequence of times  $T_k = k \in \mathbb N $ and taking the diagonal sequence we deduce that there exists a $f \in C([0, \infty) , \ell^1_{\text{weak}} (\mathbb N^M) ) $ such that $f_N(t)  \overset{\ast} {\rightharpoonup} f (t) $ as $N \to \infty $ uniformly for every  $t $ in compact sets.
Moreover, using the fact that \eqref{bound numbers truncated} holds we deduce that 
\[
  \sum_{n \in \mathbb N^M } f(t, n ) \leq 1, \quad \forall t \geq 0. 
\]

\textit{Step 2. The limit $f(t, n ) $ satisfies equation \eqref{eq:master with sources weak}.}
We now take the limit as $N \to \infty $ of all the terms in equation \eqref{eq:weak form master trunc}.
Clearly we have that for every $\varphi \in c_{00} (\mathbb N^M )$ it holds that 
\[
    \sum_{  n \in \mathbb  N^M  } \varphi(n)  f_N (t, n) \rightarrow    \sum_{  n \in \mathbb  N^M  } \varphi(n)  f (t, n), \quad t > 0 \text{ and }     \sum_{  n \in \mathbb  N^M  } \varphi(n)  f_N (0, n) \rightarrow    \sum_{  n \in \mathbb  N^M  } \varphi(n)  f_0 ( n) , \text{ as } N \to \infty. 
\]

In order to prove that $f$ satisfies \eqref{eq:master with sources weak} we want to prove that for every $ t >0 $ it holds that 
\[ 
\left|  \sum_{  n \in \mathbb  N^M  } \mathcal L_r^N  [\varphi] (n)  \int_0^t f_N (s, n) ds -  \sum_{  n \in \mathbb  N^M  }  \mathcal L_r  [\varphi] (n)  \int_0^t  f(s, n) ds   \right| \rightarrow 0 \text{ as } N \to \infty 
\] 
as well as 
\begin{equation} \label{eq:convergence of sources and sinks}
\left|  \sum_{  n \in \mathbb  N^M  } \mathcal L_s^N  [\varphi] (n)  \int_0^t  f_N (s, n) ds -  \sum_{  n \in \mathbb  N^M  }  \mathcal L_s  [\varphi] (n)  \int_0^t f(s, n)  ds \right| \rightarrow 0 \text{ as } N \to \infty 
\end{equation}
To this end notice that 
\begin{align*}
\left|  \sum_{  n \in \mathbb  N^M  }\mathcal L_r^N [\varphi] (n)   \int_0^t  f_N (s, n) ds-  \sum_{  n \in \mathbb  N^M  } \mathcal L_r [\varphi] (n)   \int_0^t  f(s, n) ds \right| & \leq  \sum_{  n \in \mathbb  N^M  }  | \mathcal L_r [\varphi] (n)  - \mathcal L_r^N [\varphi] (n) | \int_0^t  f_N (s, n )  ds \\
& +  \sum_{  n \in \mathbb  N^M  }  \mathcal L_r [\varphi] (n) \int_0^t  | f_N (s, n ) -  f (s, n ) | ds 
\end{align*}
Since the function $\varphi$ has compact support, then also the function $ \mathcal L_r [\varphi] $ has compact support. Therefore we have that 
\[
\sup_{ s \in [0, t ] } \sum_{  n \in \mathbb  N^M  }   \mathcal L_r [\varphi] (n)  | f_N (s, n ) -  f (s, n ) |  \rightarrow 0 \text{ as } N \to \infty. 
\]
Moreover we have that as $ N \to \infty $
\begin{align*}
  \sum_{  n \in \mathbb  N^M  }|  \mathcal L_r^N [\varphi] (n) - \mathcal L_r [\varphi] (n)  |  \int_0^t f_N (s, n ) ds  \leq &  t   \sum_{  n \in \mathbb  N^M  } \sum_{ \rho =1}^r  \left| (  R_\rho^N(n) - R_\rho( n) )  \left[ \varphi(n+ S_\rho) - \varphi(n)  \right] \right| \\
  & +  t  \sum_{  n \in \mathbb  N^M  }   \sum_{ \rho =1}^r \left| ( R^N_{-\rho}(n) - R_{-\rho}(n))  \left[ \varphi(n- S_\rho) - \varphi(n)  \right]  \right| \rightarrow 0. 
\end{align*}
The fact that \eqref{eq:convergence of sources and sinks} holds follows by similar arguments. 
We deduce that the function $f \in C((0, \infty ) , \ell^1_{\text{weak}} (\mathbb N^M ) ) $ satisfies the following equation 
    \begin{align*} 
    \sum_{  n \in \mathbb  N^M  } \varphi(n)  f (t, n) &= \sum_{  n \in \mathbb  N^M  } \varphi(n)  f (0,  n) + \sum_{ n  \in \mathbb  N^M } \mathcal L_r[ \varphi](n)  \int_0^t  f(s,n) ds  +  \sum_{ n  \in \mathbb  N^M } \mathcal L_s [ \varphi](n) \int_0^t  f( s,n) ds
\end{align*}
for every $\varphi \in c_{00}(\mathbb N^M ) $. 
In particular this implies that $ f \in  C^1 ((0, \infty ) , \ell^1_{\text{weak}} (\mathbb N^M ) ) $  and that $f$ satisfies  \eqref{eq:master with sources weak}. 

\textit{Step 3. For every $t \geq 0$ it holds that $ \sum_{n \in \mathbb N^M }  f(t, n )=1 $.}
To this end we first prove that for every $t \geq 0$ it holds that the bound \eqref{eq:bound on the mass} holds for a constant $C_0>0 $ that does not depend on $N$. 
In order to prove this we use the fact that the set of chemical reactions $\mathcal R $ is conservative. Hence there exists a $m \in \mathcal M $ that is such that $m \in \mathbb R_+^L $. 
Since $f_N (t, \cdot  ) $ is compactly supported we can consider the test function $\varphi_m (n) := m^T n $ in equation \eqref{eq:weak form master trunc}. We deduce that 
    \begin{align*} 
    \sum_{  n \in \mathbb  N^M  } m^T n  f_N (t, n) &= \sum_{  n \in \mathbb  N^M  } m^T n  f_N (0,  n) +  \sum_{ n  \in \mathbb  N^M } \mathcal L^N_s [ m^T n ] \int_0^t  f_N(s,n) ds \nonumber
\end{align*}
where we used the fact that 
\begin{align*}
\mathcal L^N_r (m^T n )  =  \sum_{ \rho =1}^r  R^N_\rho(n) \left[ m^T (n+ S_\rho) -  m^T n  \right] +   \sum_{ \rho =1}^r   R^N_{-\rho}(n) \left[ m^T (n- S_\rho) - m^T (n)  \right] =0. 
\end{align*}
We deduce that 
\begin{align} \label{invariant region}
      \sum_{  n \in \mathbb  N^M  } m^T n  f_N (t, n)  & =    \sum_{  n \in \mathbb  N^M  } m^T n  f_N (0,  n)  +      \sum_{  n \in \mathbb  N^M  } \int_0^t f_N(s, n ) ds \sum_{ k \in \Omega_c } \left(  A^N_k (n) - B_k (n) \right)    m^T   e_k \nonumber \\
      & \leq  \sum_{  n \in \mathbb  N^M  } m^T n  f_N (0,  n)  +      \sum_{  n \in \mathbb  N^M  } \int_0^t f_N(s, n ) ds \left( |\Omega_c| \max_{k \in \Omega_c }  \alpha_k  -  \sum_{ k \in \Omega_c } B_k (n) \right)    m^T   e_k \nonumber
      \\
      & \leq  \sum_{  n \in \mathbb  N^M  } m^T n  f_N (0,  n)  +     t  \gamma  -  \sum_{  n \in \mathbb  N^M  } \int_0^t f_N(s, n ) ds \sum_{ k \in \Omega_c } \beta_k n_k    m^T   e_k \nonumber \\
      & \leq  \sum_{  n \in \mathbb  N^M  } m^T n  f_N (0,  n)  +     t  \gamma  -  \delta \int_0^t  \sum_{  n \in \mathbb  N^M  } m^T  n  f_N(s, n ) ds 
\end{align}
where $\gamma :=  |\Omega_c| \max_{k \in \Omega_c }  \alpha_k ( m^T e_k ) >0 $ and $ \delta := \min_{\Omega_c } \beta_k >0  $. 
Using Gr\"onwall's Lemma we deduce that 
\[ 
     \sum_{  n \in \mathbb  N^M  } |n|  f_N (t, n) \leq \dfrac{1 }{ \min_{k \in \Omega_c } m^T e_k  } \sum_{  n \in \mathbb  N^M  } m^T n  f_N (t, n) \leq C_0 
\]
where
\[
C_0 := \dfrac{1 }{ \min_{k \in \Omega_c }  m^T e_k }  \left(  \sum_{  n \in \mathbb  N^M  } m^T n  f(0,  n)  + \frac{\gamma }{\delta } \right) 
\]
is a constant that does not depend on the truncation parameter $N$. 
We deduce that the inequality holds also when we pass to the limit as $N \to \infty $, i.e. 
\[ 
     \sum_{  n \in \mathbb  N^M  } |n|  f(t, n) \leq C_0. 
\]
We can now prove that 
\begin{equation} \label{eq:no gelation}
\sum_{  n \in \mathbb  N^M  }   f(t, n)  =  \sum_{  n \in \mathbb  N^M  }   f_0 (n) = 1    
\end{equation}
holds for every time $t \geq 0 $. 
To this end notice that if $ N \geq L -1 >0 $, then 
\begin{align*} 
\left| \sum_{  n \in \mathbb  N^M  }   f(t, n) - \sum_{  n \in \mathbb  N^M  }   f_0 (n)  \right|   \leq & \sum_{|n | \leq L } \left| f(t, n) - f_N (t, n )  \right| +\left| \sum_{n \leq L}  f_N (t, n )   - \sum_{n \in \mathbb N^M } f_0 (n) \right| \\
& + \sum_{ | n | >  L }  [f (t, n) +  f_N (t, n) ] \\
& \leq \sum_{|n | \leq L } \left| f(t, n) - f_N (t, n )  \right| + \sum_{n \in \mathbb N^M \setminus I^M  } f_0 (n) + \sum_{ | n | >  L }  f (t, n)  +  \sum_{ | n | >  L }  f_N (t, n) 
 \end{align*} 
Using the fact that the  numbers $N$ and $L$ are arbitrary and 
 \[
 \sum_{ \{  n \in \mathbb  N^M : |n | > L    \}  }   f(t, n) \leq \dfrac{1}{L}  \sum_{ n \in \mathbb  N^M  } |n|  f(t, n) \leq \dfrac{C_0 }{L} \quad  \text{ and } \   \sum_{ \{  n \in \mathbb  N^M : |n | > L    \}  }   f_N (t, n)  \leq \dfrac{C_0 }{L}, 
 \]
and  $ \sum_{ n \in \mathbb N^M } f_0 (n) \leq 1 $, as well as the fact  that $f_N(t)  \overset{\ast} {\rightharpoonup} f (t) $ as $N \to \infty $ we deduce that \eqref{eq:no gelation} holds.  
\end{proof}
We conclude this section stressing that in the proof of Theorem \ref{thm:existence of time dep} we did not use that the chemical reactions in the network are reversible.  Indeed we can prove an existence result also for chemical systems that contain non-bidirectional chemical reactions. 
\begin{definition}[Non-bidirectional chemical system] 
Assume that $\mathcal R$ is a set of chemical reactions i.e. 
 $\mathcal R := \{ S_\rho \}_{\rho=1}^r $. Assume that $ \mathcal R_s $ is a set of sources and sinks, i.e.  $\mathcal R_s := \{e_k \}_{k \in \Omega_c}  \cup  \{ - e_{ k  }\}_{\Omega_c }  $ with $ \Omega_c \subset \Omega$.  
 Assume that $\mathcal K := \{ R_\rho  \}_{ \rho =1 }^r $ is the set of the kinetics associated with the chemical reactions $S_\rho \in \mathcal R $ and $\mathcal K_s := \{ \alpha_k  \}_{ k \in \Omega_c }  \cup \{ \beta_k  \}_{ k \in \Omega_c } $ is the set of the chemical rates associated with the sources and sinks. 
 We say that $(\Omega, \mathcal R,  \mathcal R_s , \mathcal K, \mathcal K_s )$ is a non-bidirectional chemical system with sources and sinks. 
\end{definition}
 \begin{theorem}\label{thm:existence of time dep non rev}
Assume that $(\Omega, \mathcal R, \mathcal R_s, \mathcal K , \mathcal K_s) $ is a non-bidirectional chemical system with sources and sinks and that the set of reactions $\mathcal R $ is conservative. Assume moreover that $f_0 \in \mathcal P (\mathbb N^M)$ is such that 
\[ 
\sum_{ n \in \mathbb N^M } |n| f_0(n) < \infty . 
\]
    There exists a weak solution $f:  [0, \infty ) \rightarrow \mathcal P (\mathbb N^M) $ that satisfies 
    \begin{align*}
    \sum_{  n \in \mathbb  N^M  } \varphi(n) \partial_t   f (t, n) &=    \sum_{ \rho =1}^r  R_\rho(n) \left[ \varphi(n+ S_\rho) - \varphi(n)  \right] +    \sum_{ k \in \Omega_c }  A_k (n)   \left[ \varphi(n+ e_k ) - \varphi(n)  \right] \\
    & +    \sum_{ k \in \Omega_c } B_k (n)   \left[ \varphi(n-  e_k ) - \varphi(n)  \right] 
    \end{align*}
    for every $ \varphi \in c_{ 00 } ( \mathbb N^M) $. 
 \end{theorem}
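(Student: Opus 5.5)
The plan is to repeat the proof of Theorem \ref{thm:existence of time dep} almost verbatim, and to check at each step that reversibility of the reactions was never used. I first truncate the rates as in \eqref{truncated rates} and \eqref{truncated source}, now only for the forward kinetics $R_\rho$ with $\rho \in \{1,\dots,r\}$. This gives the truncated master equation
\[
\partial_t f_N(t,n) = \sum_{\rho=1}^r \left[ R^N_\rho(n-S_\rho) f_N(t,n-S_\rho) - R^N_\rho(n) f_N(t,n) \right] + \text{(truncated source/sink terms)}.
\]
The analogue of Proposition \ref{prop:existence truncated} holds with the same argument. The support of $f_N$ stays in the finite box $\overline I^M$, because outside $I^M$ only the sinks act and these only lower $n$. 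On that finite box, Picard--Lindelöf gives a local solution. The total mass is preserved because each jump term $R^N_\rho(n-S_\rho)f_N(n-S_\rho)$ is just the outflow $R^N_\rho(n)f_N(n)$ shifted by $S_\rho$. The source and sink bookkeeping is literally identical to before. Preservation of mass then gives global existence.

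Next I pass to the limit $N\to\infty$ exactly as in Steps 1 and 2. The bound $\|f_N(t)\|_1\le 1$ and $\ast$-weak equicontinuity, which follows from $\varphi\in c_{00}$ with a constant $C_\varphi$ independent of $N$, let me apply Ascoli--Arzelà together with a diagonal extraction in time. For fixed $\varphi\in c_{00}$ the functions $\mathcal L^N_r[\varphi]$ and $\mathcal L_r[\varphi]$ now contain only forward terms. They have compact support and coincide once $N$ is large enough, so the weak formulation passes to the limit. This yields $f\in C^1([0,\infty),\ell^1_{\text{weak}})$ satisfying the stated weak equation and $\sum_n f(t,n)\le 1$.

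The key step, and the only point where conservativity enters, is the moment bound of Step 3. I take $m\in\mathcal M_+\cap\mathbb R^M_+$, which exists by Definition \ref{def:conservative}. Since $m^T S_\rho=0$ for each $\rho$, the forward-only operator still gives
\[
\mathcal L^N_r[m^Tn] = \sum_\rho R^N_\rho(n)\, m^T S_\rho = 0 .
\]
Reversibility was never needed here: each reaction conserves $m$ individually. The source and sink contribution is bounded exactly as in \eqref{invariant region}, giving a linear differential inequality for $\sum_n m^T n\, f_N$. Grönwall's lemma then yields $\sum_n |n| f_N(t,n)\le C_0$ uniformly in $N$ and $t$. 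Here I divide by $\min_k m(k)>0$, which I take over all of $\Omega$ since $m$ is strictly positive; this is a slight cleanup of the earlier constant. The bound passes to the limit $f$ by lower semicontinuity.

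Finally, tightness from this moment bound, via $\sum_{|n|>L} f \le C_0/L$, combined with mass conservation of $f_N$ and $\ast$-weak convergence, gives $\sum_n f(t,n)=1$ by the same three-term splitting used for \eqref{eq:no gelation}. So $f(t,\cdot)\in\mathcal P(\mathbb N^M)$. I expect no real obstacle. The point needing the most care is verifying that the truncated mass balance and the vanishing of $\mathcal L_r[m^T n]$ use only $m^TS_\rho=0$ and not the pairing of $\rho$ with $-\rho$; both hold term by term.
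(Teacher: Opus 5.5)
Your route is the paper's own. The paper gives no separate argument for this theorem; it only remarks that the proof of Theorem \ref{thm:existence of time dep} never uses the pairing of $\rho$ with $-\rho$, which is what you verify term by term. Your replacement of $\min_{k\in\Omega_c} m^Te_k$ by $\min_{k\in\Omega} m(k)$ correctly fixes the constant.

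However, one step you import is wrong as stated. The Gr\"onwall bound ``uniformly in $N$ and $t$'' rests on the last inequality in \eqref{invariant region},
\[
\sum_{k\in\Omega_c}\beta_k\, n_k\, m^Te_k \;\ge\; \delta\, m^Tn .
\]
This fails whenever $\Omega_c\neq\Omega$ (take $n=e_j$ with $j\notin\Omega_c$), because the sinks only act on species in $\Omega_c$ while $m^Tn$ counts all species. In the non-bidirectional setting the time-uniform bound is actually false. Take $\Omega=\{1,2\}$, the single reaction $(1)\to(2)$ with rate $Kn_1$ (conservative with $m=(1,1)$), and a source/sink for species $1$ only. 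Then
\[
\tfrac{d}{dt}\mathbb E[n_1]=\alpha-(\beta+K)\mathbb E[n_1], \qquad \tfrac{d}{dt}\mathbb E[n_2]=K\,\mathbb E[n_1],
\]
so $\mathbb E[n_2]$ grows linearly in $t$.

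The repair is easy and is all the theorem needs. Since $f_N\ge 0$, $B_k\ge 0$ and $m\ge 0$, you can simply drop the sink contribution to get
\[
\sum_{n}m^Tn\,f_N(t,n)\;\le\;\sum_n m^Tn\,f_0(n)+t\sum_{k\in\Omega_c}\alpha_k\, m(k).
\]
This is uniform in $N$ for $t$ in compact sets. Your tightness estimate $\sum_{|n|>L}f(t,n)\le C(t)/L$ and the three-term splitting then give $\sum_n f(t,n)=1$ for every $t$, exactly as you describe.

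Two smaller points. First, nonnegativity of $f_N$ (the truncated problem is a linear system with nonnegative off-diagonal coefficients) is used both for $\|f_N(t)\|_1\le 1$ and for discarding the sink term, so you should state it. Second, your claim that $\mathcal L^N_r[\varphi]$ and $\mathcal L_r[\varphi]$ coincide for large $N$ requires the truncation box to be $\{0,\dots,N\}^M$. With $\{1,\dots,N\}^M$, as $I$ is literally defined, the truncated rates vanish at every state with a zero coordinate.
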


 \subsection{Uniqueness}
 In this section we will restrict the attention to mass action chemical systems. 
In order to prove Theorem \ref{thm:existence and uniqueness of time dep} it remains to prove that equation \eqref{eq:master with sources weak} has a unique solution. 
To this end it is convenient to study the dual to  equation \eqref{eq:master with sources weak}, i.e. 
    \begin{align*} 
- \partial_t  \varphi(t, n)   = 
   \mathcal L_r[ \varphi (t) ](n) + \mathcal L_s [ \varphi (t) ](n), \quad t \in [0, T],\  n \in \mathbb N^M  
\end{align*}
with $\varphi (T) = \varphi_0 \in c_{00} (\mathbb N^M )$. 
In order to study this equation we make the change of variables $\psi(s, n ) :=  \varphi (T- t, n )$. Hence 
    \begin{align} \label{eq: dual}
 \partial_s  \psi(s, n)   = 
   \mathcal L_r[ \psi (s) ](n) + \mathcal L_s [ \psi (s) ](n), \quad s\in [0, T] \quad n \in \mathbb N^M \text{ and } \psi(0,  n) = \varphi_0    . 
\end{align}

We start by proving the existence of a solution $ \psi $ to the dual problem and in particular we will prove that the solution $ \psi $ satisfies an exponential bound. More precisely we prove the following statement. 
\begin{proposition}\label{prop:existence of solution to the dual problem}
Assume that $(\Omega, \mathcal R, \mathcal R_s, \mathcal K , \mathcal K_s) $ is a mass action chemical system with sources and sinks and that the set of reactions $\mathcal R $ is conservative.
    For any  $T>0 $ there exists a sequence of functions $\psi (t)=(\psi( t , n ))_{ n \in \mathbb N^M} $ that is such that for every $ n \in \mathbb N^M $ the function  $ t \mapsto \psi(t, n )  $ is continuously differentiable and satisfies  equation \eqref{eq: dual} with initial condition $\psi_0 \in c_{00} (\mathbb N^M )$ with $ \sup_{ n \in \mathbb N^M } \psi_0 (n) \leq 1 $. 
    Moreover 
   \begin{equation} \label{eq:bound for the dual at infinity}
   \psi(s, n ) \leq C e^{ - b  | M(n)  | e^{ - \gamma s }} \quad \forall n \in \mathbb N^M, \ s \in [0, T]
   \end{equation}
   where $\gamma \geq \max_{ k \in \Omega_c } \beta_k $ and 
   \begin{equation} \label{M(n)} 
   M(n) := ( m_j^T n )_{ j \in L } \in \mathbb N^L 
   \end{equation} 
   where $\mathcal B = \{ m_j \}_{j =1}^L $ is the extremal basis of conserved quantities, $0 < b < \dfrac{ 1} { \max_{ k \in \Omega_c } |M(e_k )| }  $ and $C \geq \max_{n \in \mathbb N^M } \left( \psi_0 (n) e^{ b |M(n) | } \right) $. 
 \end{proposition}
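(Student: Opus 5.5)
Since $\mathcal L_r+\mathcal L_s$ has unbounded coefficients, I will not solve \eqref{eq: dual} directly. The plan is to construct $\psi$ as a limit of solutions to truncated dual problems and to obtain \eqref{eq:bound for the dual at infinity} from a comparison principle against an explicit supersolution.

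\emph{Truncated problems.} For $N\ge 1$ consider
\[
\partial_s\psi_N=\mathcal L^N_r[\psi_N]+\mathcal L^N_s[\psi_N],\qquad \psi_N(0)=\psi_0,
\]
with the truncated operators \eqref{truncated dual reaction}--\eqref{truncated dual sources}, where the sink rate $B_k$ is also cut off to $I^M$. Outside a finite box the right-hand side vanishes, so $\psi_N(s,n)=\psi_0(n)$ there. The system is therefore a finite linear ODE and has a unique $C^1$ solution.

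\emph{Maximum principle.} The generator is a Markov generator: nonnegative off-diagonal coefficients and zero row sums. Hence $0\le\psi_N\le\sup\psi_0\le 1$. I will also use the comparison principle in the following form: if $W$ satisfies $\partial_sW\ge \mathcal L^N_r[W]+\mathcal L^N_s[W]$ and $W(0)\ge\psi_0$, then $\psi_N\le W$. This follows by looking at the first time and point where $W-\psi_N$ would become negative.

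\emph{The supersolution.} Take
\[
W(s,n)=C\exp\!\big(-b|M(n)|e^{-\gamma s}\big).
\]
Since every $m_j$ is a conserved quantity, $M(n+S_\rho)=M(n)$, so $\mathcal L_r[W]=0$. This is exactly why $W$ is built from $M(n)$, and it means no information on mass-action reaction rates is needed. Since $m_j\ge0$ and $M(n)\in\mathbb N^L$, we have $|M(n\pm e_k)|=|M(n)|\pm|M(e_k)|$ whenever $n\pm e_k\in\mathbb N^M$. Write $x=be^{-\gamma s}$ and $a_k=|M(e_k)|$. Then
\[
\mathcal L_s[W]=W\sum_{k\in\Omega_c}\Big[\alpha_k\big(e^{-xa_k}-1\big)+\beta_k n_k\big(e^{xa_k}-1\big)\Big].
\]
The first bracket is nonpositive. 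For the second, since $xa_k\le ba_k<1$, we have $e^{xa_k}-1\le c\,xa_k$ with $c=e^{ba_k}$.

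On the other side,
\[
\partial_sW=W\,\gamma x|M(n)|\ \ge\ W\,\gamma x\sum_{k\in\Omega_c}n_k a_k,
\]
again using nonnegativity of the $m_j$. So the supersolution inequality holds as soon as $\gamma\ge c\max_k\beta_k$.

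This is the main obstacle. The statement only asks for $\gamma\ge\max_k\beta_k$, which would require $e^{xa}-1\le xa$, and that is false. I would first try to use the room in $b<1/\max_k a_k$ to absorb the constant $c\le e$ (for instance by shrinking $b$ or redefining $C$). If that fails, I will accept $\gamma\ge e\max_k\beta_k$, which is harmless for the later applications.

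The truncation of $B_k$ and $A_k$ only removes nonnegative contributions near the boundary of the box, so the same computation gives a supersolution for the truncated operator. The initial condition holds because $C\ge\max_n\psi_0(n)e^{b|M(n)|}$. Comparison then gives $\psi_N\le W$ uniformly in $N$.

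\emph{Passing to the limit.} For each fixed $n$, the derivative $\partial_s\psi_N(s,n)$ is bounded uniformly in $N$, because only finitely many neighbours with finite rates enter and $0\le\psi_N\le 1$. Arzel\`a--Ascoli together with a diagonal argument over $n$ gives a pointwise limit $\psi$, locally uniform in $s$. Passing to the limit in the integrated equations is pointwise, since at each $n$ there are finitely many terms. Hence $\psi$ is $C^1$, solves \eqref{eq: dual}, and inherits the bound \eqref{eq:bound for the dual at infinity}.
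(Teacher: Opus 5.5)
Your argument is the paper's: truncate, compare with the supersolution $Ce^{-b|M(n)|e^{-\gamma s}}$, and pass to the limit by Arzel\`a--Ascoli with a diagonal argument. On that supersolution $\mathcal L_r$ vanishes by conservation and the source terms have the right sign. The one place you diverge is the constant in $\gamma$, and there you are right and the paper is not. Its Step~1 bounds $\max_k\beta_k\sum_{k\in\Omega_c} n_k\bigl(e^{b|M(e_k)|e^{-\gamma s}}-1\bigr)$ by $\max_k\beta_k\, b|M(n)|e^{-\gamma s}$, which amounts to $e^y-1\le y$, and that is false.

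Do not try to rescue $\gamma=\max_k\beta_k$ by shrinking $b$ or adjusting $C$: with $C$ as the statement allows, the bound itself is false. Here is a counterexample.
\begin{itemize}
\item Take $\Omega=\{1,2,3\}$, the reaction $(2)\rightleftarrows(3)$, and a source/sink only for species $1$. With extreme rays $(1,0,0)$ and $(0,1,1)$ we get $|M(n)|=n_1+n_2+n_3$ and $b<1$.
\item The states $(n_1,0,0)$ form a closed class. On it $\psi(s,\cdot)=\mathbb E_{n_1}[\psi_0(X_s)]$, where $X_s\sim\mathrm{Bin}(n_1,p)+\mathrm{Poi}(\lambda)$, $p=e^{-\beta_1 s}$ and $\lambda=\alpha_1(1-p)/\beta_1$.
\item Choose $\psi_0(n)=e^{-bn_1}\mathds{1}_{\{n_1\le K,\ n_2=n_3=0\}}$. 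The statement then allows $C=1$. As $K\to\infty$,
\[
\psi\bigl(s,(n_1,0,0)\bigr)\to e^{-bn_1p}\,e^{n_1 g(p)}\,e^{-\lambda(1-e^{-b})},\qquad g(p):=\log\bigl(1-p(1-e^{-b})\bigr)+bp .
\]
\item The function $g$ is strictly concave with $g(0)=g(1)=0$, so $g(p)>0$ for every $s>0$. Hence the claimed bound $e^{-bn_1 e^{-\beta_1 s}}$ fails once $n_1$ is large and then $K$ is taken large.
\end{itemize}

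Expanding at $s=0$ shows that in this example $\gamma\ge\beta_1(e^{b}-1)/b$ is necessary. This is exactly the condition $\gamma\ge\max_k\beta_k\,(e^{b|M(e_k)|}-1)/(b|M(e_k)|)$ under which your $W$ is a supersolution.

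So your fallback $\gamma\ge e\max_k\beta_k$ is the correct form of the proposition; in fact $\gamma\ge(e-1)\max_k\beta_k$ already suffices. It costs nothing later, because the maximum principle and the uniqueness proof only use some exponential decay in $|M(n)|$ on $[0,T]$.
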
  
In order to prove Proposition \ref{prop:existence of solution to the dual problem} is convenient to introduce the truncated dual problem 
\begin{align}  \label{eq:truncated dual}
 \partial_s  \psi^N (s, n)   = 
   \mathcal L_r^N[ \psi^N (s) ](n) + \overline {\mathcal L}_s^N  [ \psi^N  (s) ](n), \quad \psi(0, \cdot) = \psi_0, \quad n \in \bar I^M 
   \end{align}
where the operator $ \mathcal L^N_r $ is defined as in \eqref{truncated dual reaction} while 
\[
 \overline {\mathcal L}_s^N  [ \varphi (s) ](n) :=   \sum_{ k \in \Omega_c }  A^N_k (n)   \left[ \varphi(n+ e_k ) - \varphi(n)  \right] +    \sum_{ k \in \Omega_c } B_k^N (n)   \left[ \varphi(n-  e_k ) - \varphi(n)  \right]
\]
where 
  \begin{equation} \label{truncated sink}
    B_k^N  (n)  = \begin{cases} & \beta_k n_k, \text{ if } n \in \bar I^M, \\
    & 0 \ \text{ otherwise }. 
    \end{cases}
 \end{equation}
 In order to prove the existence of a solution to the dual problem it is convenient to truncate also the sink term because this guarantees that the truncated solution has compact support. 
 Indeed by the definition of the truncations we have that if $ \psi^N $ satisfies \eqref{eq:truncated dual} then $ \operatorname{ supp } \psi^N (t, \cdot) =\bar I^M $. 
 The existence of a solution to the truncated problem then can be proved using the classical existence results for systems of ODEs. 
Before proving that we notice that the truncated equation \eqref{eq:truncated dual} satisfies the maximum principle, i.e. the following statements hold. 
\begin{lemma} \label{lem:maximum principle trunc}
   Assume that the sequence of functions  $\psi^N (t)= (\psi_N(t, n ))_{n \in \mathbb N^M }  $ that are such that for every $n \in \mathbb N^M $ the function  $ t \mapsto \psi_N(t, n )$ is continuously differentiable and satisfies the following equation 
        \begin{align} \label{subsolution trunc}
 \partial_s  \psi^N(s, n)   -  
   \mathcal L^N_r[ \psi^N (s) ](n) - \bar{ \mathcal L}_s^N [ \psi^N (s) ](n) \leq 0 , \quad s \geq 0  \quad n \in \mathbb N^M     
\end{align}
    with initial datum $ \psi (0) = \psi_0 \in c_{00} (\mathbb N^M)$.
    Then 
    \begin{align*}
       \sup_{ t \in [0, \infty) } \max_{ n \in \mathbb N^M } \psi^N (t, n ) \leq    \sup_{ n \in \mathbb N^M } \psi_0 ( n ). 
    \end{align*}
    Moreover if $\psi^N $ attains a maximum at time $ t= \overline t  $, then $ \psi^N $ is constant. 
    
    Assume that the sequence of functions  $\psi^N (t)= (\psi_N(t, n ))_{n \in \mathbb N^M }  $ that are such that for every $n \in \mathbb N^M $ the function  $ t \mapsto \psi_N(t, n )$ is continuously differentiable and satisfies
        \begin{align} \label{supersolution trunc}
 \partial_s  \psi^N(s, n)   -  
   \mathcal L_r[ \psi^N (s) ](n) -  \mathcal L_s [ \psi^N (s) ](n) \geq 0 , \quad s\geq 0, \quad n \in \mathbb N^M     . 
\end{align}
    with initial datum $ \psi (0) = \psi_0 \in c_{00} (\mathbb N^M)$.
    Then 
    \begin{align*}
  \inf_{ t \in [0, \infty)   } \min_{ n \in \mathbb N^M } \psi^N (t, n ) \geq   \sup_{ n \in \mathbb N^M } \inf_0 ( n ) . 
    \end{align*}
    Moreover if $\psi^N $ attains a minimum at time $ t= \overline t  $, then $ \psi^N $ is constant. 
\end{lemma}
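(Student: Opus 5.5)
The argument is a finite-dimensional maximum principle. For fixed $N$, the truncated operators $\mathcal L^N_r$ and $\bar{\mathcal L}^N_s$ vanish outside the finite set $\bar I^M$. Hence $\psi^N(t,n)=\psi_0(n)$ for $n\notin \bar I^M$, since there $\partial_s\psi^N\le 0$ and the reverse inequality comes from the equation. For the generic points, the essential structure is that the right-hand side at $n$ has the form $\sum_{j} c_j(n)\,[\psi(n+v_j)-\psi(n)]$ with nonnegative coefficients $c_j(n)$: the rates $R^N_{\pm\rho}$, $A^N_k$ and $B^N_k$. Summing over $n$ is never needed; the whole argument is pointwise. (I read the second statement, with $\mathcal L_r,\mathcal L_s$, as the truncated operators, and its right-hand side as $\inf_n \psi_0(n)$.)

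\emph{Bound.} Set $\Psi(t):=\max_{n\in\bar I^M\cup \operatorname{supp}\psi_0}\psi^N(t,n)$, a maximum over a finite set. Outside this set $\psi^N(t,n)=\psi_0(n)=0\le \sup\psi_0$; the same holds inside $\operatorname{supp}\psi_0$ away from $\bar I^M$, where the value is constant in time. I would prove $\Psi(t)\le \Psi(0)$ by the usual perturbation.

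Let $\varepsilon>0$ and $\tilde\psi=\psi^N-\varepsilon t$. Then $\partial_s\tilde\psi-\mathcal L[\tilde\psi]\le -\varepsilon<0$. Suppose $\tilde\psi$ first exceeds $\Psi(0)$ at a time $t_0>0$ at some point $n_0$; this first time exists since finitely many $C^1$ functions are involved. Then $\tilde\psi(t_0,n_0)$ is a spatial maximum at time $t_0$, so $\mathcal L[\tilde\psi](t_0,n_0)\le 0$, because every difference term is $\le 0$ and carries a nonnegative coefficient. Also $\partial_s\tilde\psi(t_0,n_0)\ge0$, since $t_0$ is the first crossing time. Together these contradict $\partial_s\tilde\psi-\mathcal L[\tilde\psi]<0$. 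Letting $\varepsilon\to0$ gives the bound.

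\emph{Strong form.} Suppose the maximum $\max_n\psi^N(\bar t,n)=\sup\psi_0$ is attained at $(\bar t,n_0)$ with $\bar t>0$. The function $t\mapsto \psi^N(t,n_0)$ attains its supremum at the interior time $\bar t$, so $\partial_s\psi^N(\bar t,n_0)=0$. The inequality then forces $\mathcal L[\psi^N](\bar t,n_0)\ge 0$. Since every term is $\le0$, each neighbour $n_0\pm S_\rho$, $n_0\pm e_k$ reached with a positive rate also attains the maximum. Iterating this along the reaction graph propagates the maximum to every state connected to $n_0$ through positive truncated rates.

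To upgrade this to constancy in time, I would use a Hopf-type ODE comparison. On the set where the maximum is attained, $w:=\sup\psi_0-\psi^N\ge0$ satisfies $\partial_s w\ge -c\,w$ plus nonnegative neighbour contributions, with $c:=\max_n\sum_j c_j(n)<\infty$ on the finite set. Hence if $w(\bar t,n_0)=0$ then $w(t,n_0)=0$ for all $t\le\bar t$. I expect this step to be the main obstacle. "Constant" should be understood on the communicating class of $n_0$, which for the truncated chain is all relevant states once the sources and sinks connect everything. I would verify irreducibility using that sources and sinks are present for $k\in\Omega_c$ together with the conservative reactions; if that fails, I would state the conclusion on the class.

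\emph{Minimum principle.} This follows identically, either by applying the above to $-\psi^N$ or by using the mirrored first-crossing argument with $\tilde\psi=\psi^N+\varepsilon t$.
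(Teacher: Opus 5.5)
Your proof of the bound is correct and uses the same idea as the paper: at a spatial maximum every difference term of $\mathcal L^N_r+\bar{\mathcal L}^N_s$ is $\le 0$, so the maximum cannot increase. The paper fixes one maximiser $\bar n$ and differentiates $t\mapsto\psi^N(t,\bar n)$, which silently assumes the maximiser does not move in time. Your perturbation $\psi^N-\varepsilon t$, with a first crossing time over a finite set, is exactly what makes this rigorous. Your reading of the second half (truncated operators, $\inf_n\psi_0(n)$ on the right) is the only sensible one.

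Some small corrections to the bound:
\begin{itemize}
\item Outside $\bar I^M$ a subsolution only gives $\psi^N(t,n)\le\psi_0(n)$, not equality. This is all you need.
\item To say $(t_0,n_0)$ is a spatial maximum, you need every positive-rate jump from $\bar I^M$ to land in $\bar I^M$. This holds, because reactions fire only from $n\in I^M$ with $n\ge\nu_{\pm\rho}$, sources only from $I^M$, and sinks only when $n_k\ge 1$. You can avoid the check by taking $\sup_n\psi_0(n)\ge 0$ as the crossing level instead of $\Psi(0)$.
\item $t_0=0$ is possible, which changes nothing.
\end{itemize}

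The paper's proof does not address the strong statement at all. Your argument for it is correct, and the step you flagged as the main obstacle is routine. Since $\partial_s w\ge -c\,w$ with $w\ge0$, the map $s\mapsto e^{cs}w(s,n_0)$ is nondecreasing, so $w(\bar t,n_0)=0$ forces $w(\cdot,n_0)\equiv 0$ on $[0,\bar t]$. Every $t\in(0,\bar t]$ is then an interior maximum time, and your propagation gives $\psi^N\equiv\sup_n\psi_0(n)$ on $[0,\bar t]\times\mathcal R(n_0)$. Here $\mathcal R(n_0)$ is the set of states reachable from $n_0$ through positive truncated rates; this forward-reachable set is in general larger than the communicating class.

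The step that fails is the one you planned to verify. The truncated chain is not irreducible, and the lemma's global conclusion that $\psi^N$ is constant is false.
\begin{itemize}
\item Every $n\notin\bar I^M$ is absorbing.
\item So is $n=0$: since $I=\{1,\dots,N\}$, no reaction or source fires at $0$, and $B^N_k(0)=0$.
\item Take $\psi_0=\mathds 1_{\{0\}}$. The solution of \eqref{eq:truncated dual} has $\psi^N(t,0)=1=\sup_n\psi_0(n)$ for every $t$, while $\psi^N(t,n)=0$ for all $n\notin\bar I^M$.
\item For a genuine subsolution, constancy cannot persist past $\bar t$. The function $m-((t-\bar t)_+)^2$ is a subsolution, independent of $n$, attaining its maximum $m$ exactly on $[0,\bar t]$.
\end{itemize}

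So your fallback is not optional. The provable statement is the localised one on $[0,\bar t]\times\mathcal R(n_0)$. For an exact solution you may add $t\ge\bar t$, since $\mathcal R(n_0)$ is invariant and the constant is the unique solution of the restricted finite ODE system. The minimum principle localises in the same way. Irreducibility can fail even without truncation: in the model of Section~\ref{sec:example 1} the number of channels is conserved.
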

\begin{proof}
Since $\psi^N (t, \cdot) $ has compact support we have that $\max_{ n \in \bar{I}^M }  \psi^N (t, n ) =   \psi^N (t, \overline n ) $ for some $ \overline n \in \mathbb N^M $. We define the function $M(t) := \psi^N (t, \overline n )$ for $ t \geq 0. $
Since $ \psi^N $ is a solution to \eqref{eq:truncated dual} then we have that 
\begin{align*}
    \frac{d M (t) }{ dt } = \partial_t f(t, \overline n )  \leq 0. 
\end{align*}
As a consequence we have that $ M(t) \leq M(0) $ and the desired conclusion follows. The same argument can be adapted to prove the statement for the subsolutions \eqref{subsolution trunc}. 
\end{proof}
A direct consequence of Lemma \ref{lem:maximum principle} is the following lemma. 
\begin{lemma}[Comparison principle for the truncated problem] \label{lem:comparison principle truncated}
    Assume that $\psi^N  $  is a solution to \eqref{eq:truncated dual} with initial datum $ \psi^N (0) = \psi_0 $. Assume that $ \overline \psi^N   $  is a supersolution to \eqref{eq:truncated dual} with initial datum $ \overline \psi^N (0) = \overline \psi_0 \geq \psi_0  $, i.e.  it satisfies \eqref{supersolution trunc}
    Then $  \overline \psi^N (t, n )  \geq  \psi^N (t, n )  $ for every $ t >0 $, $ n \in \mathbb N^M $. 
\end{lemma}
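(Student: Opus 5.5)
The plan is to apply the maximum principle of Lemma \ref{lem:maximum principle trunc} to the difference $w^N(t,n) := \psi^N(t,n) - \overline\psi^N(t,n)$. The operator on the right-hand side of \eqref{eq:truncated dual}, namely $\mathcal L_r^N + \overline{\mathcal L}_s^N$, is linear. Hence subtracting the supersolution inequality satisfied by $\overline\psi^N$ from the equation satisfied by $\psi^N$ gives
\begin{equation*}
\partial_s w^N(s,n) - \mathcal L_r^N[w^N(s)](n) - \overline{\mathcal L}_s^N[w^N(s)](n) \leq 0, \quad s \geq 0, \ n \in \mathbb N^M ,
\end{equation*}
with initial datum $w^N(0) = \psi_0 - \overline\psi_0 \leq 0$. (Throughout, the supersolution inequality \eqref{supersolution trunc} is read with the truncated operators $\mathcal L_r^N, \overline{\mathcal L}_s^N$, as in the truncated problem.) So $w^N$ is a subsolution in the sense of \eqref{subsolution trunc}.

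The next step is to check that the maximum principle applies to $w^N$. That argument needs the maximum of $w^N(t,\cdot)$ over the relevant index set to be attained at some state $\overline n$, and at that state each term of the operator has the right sign. Each term has the form $\text{rate}\times[w^N(\overline n \pm S) - w^N(\overline n)]$ with a nonnegative rate, so at a maximizer every term is $\leq 0$.

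For the maximizer to exist, I restrict to the finite box $\overline I^M$, where the truncated dynamics live. All truncated rates $R^N_{\pm\rho}$, $A^N_k$ and $B^N_k$ vanish outside this box, so the equation for $n$ in the box only involves values inside the box. The maximum over a finite set is then attained, and the derivative argument works as follows. Let $\mathcal M(t) := \max_{n} w^N(t,n)$. This function is Lipschitz, and at points of differentiability $\mathcal M'(t) = \partial_t w^N(t,\overline n(t)) \leq 0$ by the subsolution inequality and the sign of the operator at the maximizer. Hence $\mathcal M(t) \leq \mathcal M(0) \leq 0$, which gives $\psi^N \leq \overline\psi^N$ on the box.

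The main obstacle is this bookkeeping of supports: outside $\overline I^M$ the values are either fixed by the initial data or trivially ordered, so I must verify that the comparison there is also respected. The other point of care is the a.e. differentiability of the running maximum. A cleaner alternative that avoids that technicality is to use a Gronwall-type argument on $\sum_n (w^N)_+$. Alternatively, since the truncated system is a finite linear ODE $\dot x = Qx$, where $Q$ has nonnegative off-diagonal entries (a Metzler matrix), $e^{tQ}$ is entrywise nonnegative. Then Duhamel's formula applied to the nonpositive forcing $\partial_t w^N - Qw^N \leq 0$ gives $w^N \leq 0$ directly. I would present the Metzler argument as the main proof, since it sidesteps the maximizer issues entirely.
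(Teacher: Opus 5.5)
Your argument is correct, and your preferred (Metzler) proof takes a different route from the paper's. The paper gives no separate proof. It declares the lemma a direct consequence of the maximum principle, i.e.\ one applies the subsolution part of Lemma \ref{lem:maximum principle trunc} to $\psi^N-\overline\psi^N$. The text actually cites the untruncated Lemma \ref{lem:maximum principle}, but that lemma concerns a different operator and relies on \eqref{eq:bound for the dual at infinity}, which is itself derived from the present lemma. So the truncated version is what is meant; likewise, your reading of \eqref{supersolution trunc} with the truncated operators is the intended one.

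Your fallback argument is exactly the paper's route, done more carefully. The proof of Lemma \ref{lem:maximum principle trunc} fixes one maximizer $\overline n$ and differentiates $t\mapsto\psi^N(t,\overline n)$ as if it stayed a maximizer. You instead use that the running maximum of finitely many $C^1$ functions is Lipschitz, and that its a.e.\ derivative equals $\partial_t w^N$ at any current maximizer.

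The Metzler argument replaces the maximum principle by linear algebra. On the closed finite box $\overline I^M$ the truncated generator is a matrix $Q$ with nonnegative off-diagonal entries, so $e^{tQ}\ge 0$ entrywise. Duhamel's formula with the nonpositive forcing $\partial_t w^N-Qw^N$ then gives $w^N\le 0$ immediately. This buys a self-contained proof with no maximizer bookkeeping and no dependence on other lemmas, but it is tied to the finiteness of the state space. The maximum-principle route is the one that transfers to the untruncated dual problem (Lemmas \ref{lem:maximum principle} and \ref{lem:comparison principle}), where the decay bound is what guarantees the supremum is attained.

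Finally, the support issue you flag as the main obstacle is harmless. For $n\notin\overline I^M$ every truncated rate vanishes, so the subsolution inequality reduces to $\partial_t w^N(t,n)\le 0$, and hence $w^N(t,n)\le w^N(0,n)\le 0$ there.
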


We are ready to prove the existence of a solution to the dual truncated problem. 
 \begin{proposition}\label{prop:existence of solution to the dual problem}
Assume that $(\Omega, \mathcal R, \mathcal R_s, \mathcal K , \mathcal K_s) $ is a mass action chemical system with sources and sinks and that the set of reactions $\mathcal R $ is conservative.
    For any  $T>0 $ there exists  sequence of functions  $\psi^N (t)= (\psi_N(t, n ))_{n \in \mathbb N^M }  $ that are such that for every $n \in \mathbb N^M $ the function  $ t \mapsto \psi_N(t, n )$ is continuously differentiable and satisfies  equation \eqref{eq:truncated dual} with initial condition $\psi_0 \in c_{00} (\mathbb N^M )$ with $ \sup_{ n \in \mathbb N^M } \psi_0 (n) \leq 1 $.
 \end{proposition}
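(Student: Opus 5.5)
The truncated dual problem \eqref{eq:truncated dual} is a finite-dimensional linear system of ODEs, so I will treat it that way. All rates are truncated: $R^N_{\pm\rho}$ and $A^N_k$ vanish outside $I^M$, and $B^N_k$ vanishes outside $\bar I^M$. The right-hand side at a node $n\in\bar I^M$ therefore only involves the values $\psi^N(s,n')$ at finitely many neighbours $n'=n\pm S_\rho$, $n\pm e_k$. For $n\notin\bar I^M$ every coefficient is zero and $\partial_s\psi^N(s,n)=0$.

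I will set $\psi^N(s,n)=\psi_0(n)$ for $n$ outside $\bar I^M$ and for all $s\ge 0$. The values $\psi^N(s,n')$ with $n'\notin\bar I^M$ that enter the equations for $n\in\bar I^M$ are then known, time-independent constants. On $\bar I^M$ the problem takes the form $\dot x = Q_N x + c_N$, where $x=(\psi^N(s,n))_{n\in\bar I^M}\in\mathbb R^{\bar I^M}$. Here $Q_N$ is a finite matrix with off-diagonal entries given by the (nonnegative, bounded) truncated rates, and $c_N$ is a constant vector.

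Since this is a linear system with constant coefficients in finite dimension, it has a unique global solution $x(s)=e^{sQ_N}x_0+\int_0^s e^{(s-\sigma)Q_N}c_N\,d\sigma$. This solution is $C^1$ (indeed $C^\infty$) in $s$ for each $n$ and exists on $[0,T]$ for every $T>0$. One could equally invoke Picard–Lindelöf, exactly as in the proof of Proposition \ref{prop:existence truncated}, since the right-hand side is globally Lipschitz with constant bounded by $2\sum_\rho\max_{\bar I^M}(R^N_\rho+R^N_{-\rho})+2\sum_k(\alpha_k+\beta_k|\bar I|)$.

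Next I will check that $\psi^N$ satisfies \eqref{eq:truncated dual} pointwise for every $n\in\mathbb N^M$: on $\bar I^M$ by construction, and outside $\bar I^M$ trivially, because both sides vanish. The only point requiring attention is that no equation couples a node of $\bar I^M$ to an unknown outside the finite set. This follows from the truncations and from the fact that $\bar I$ was enlarged by $\max|S_\rho(k)|+1$.

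Finally, I will note the uniform bound $\sup_{s,n}\psi^N(s,n)\le\sup_n\psi_0(n)\le 1$, which will be needed later for the limit $N\to\infty$. It comes from the maximum principle, Lemma \ref{lem:maximum principle trunc}, applied to the finite system. This works because the generator $\mathcal L^N_r+\bar{\mathcal L}^N_s$ has nonnegative off-diagonal rates and zero row sums, so $e^{sQ_N}$ acting on the full vector (including the frozen outside values) is a sub-Markov averaging. Symmetrically, $\psi^N\ge\inf\psi_0$.

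I expect no real obstacle in this statement. The only delicate point is the bookkeeping of the support: choosing how to treat nodes outside $\bar I^M$ so that the system is genuinely finite and the maximum principle applies.
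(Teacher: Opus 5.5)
Your proposal is correct and follows essentially the paper's route: the paper also treats \eqref{eq:truncated dual} as a finite system of ODEs on $\bar I^M$ and gets local existence from Picard--Lindel\"of. The only difference is global existence: the paper extends to all times by noting that $\psi^N\equiv 1$ is a solution, implicitly relying on comparison. You get it directly from linearity via $e^{sQ_N}$, a stochastic matrix here since $c_N=0$ and $Q_N$ has zero row sums. Your route also gives $\inf\psi_0\le\psi^N\le\sup\psi_0$ more cleanly than the argument in Lemma \ref{lem:maximum principle trunc}.
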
 
 \begin{proof}
    In order to prove this Lemma we consider the truncated problem \eqref{eq:truncated dual}. 
The existence of a solution to the truncated equation can be proven using Picard-Lindelöf Theorem.
Indeed, equation \eqref{eq:truncated dual} can be written as 
\[
\frac{d \psi^N(t, n ) }{dt} = \mathcal T_n ( \psi^N ) , \quad  n \in I^M 
\]
where for every $n \in I^M $ the operator $ \mathcal T_n : \mathbb R_+^{  I^M} \rightarrow \mathbb R_*  $ is a locally Lipschitz function defined as 
\[ 
 \mathcal T_n (x): =  \mathcal L_r^N[ x  ](n) + \overline {\mathcal L}_s^N  [ x ](n). 
   \]
As a consequence there exists a time $T_*>0  $ such that there exists a solution to \eqref{eq:truncated dual} exists on $ [0, T_*] $. 
Notice moreover that $\psi^N (t, n ) \equiv 1 $ for every $ t \geq 0 $ and every $n \in \bar{ I}^M $ is a solution to \eqref{eq:truncated dual}.
We then can extend the solution to all positive times. 
 \end{proof}

We now prove the existence of a solution to \eqref{eq: dual} and the double exponential bound \eqref{eq:bound for the dual at infinity}.

\begin{proof}[Proof of Proposition \ref{prop:existence of solution to the dual problem}]
In order to prove the statement we aim at proving that there exists a limit $ \psi $ as $N \to \infty $ to the sequence of solutions to the truncated problem $\{ \psi^N \}_{ N \geq 1 } $. To this end we proceed in two steps. First of all we prove that the function $F(s,n ) := e^{ - b M (n) e^{ - \gamma s }} $ is a supersolution to \eqref{eq:truncated dual}. 
As a second step we will use this bound to prove the existence of a limit for the sequence $ \psi_N $ as $N \to  \infty$.

\textit{Step 1. Proof of the bound \eqref{eq:bound for the dual at infinity}. }
We first prove that the  function $F(s,n ) := e^{ - b M (n) e^{ - \gamma s }} $ is a supersolution to \eqref{eq:truncated dual}, i.e. is satisfies 
\[ 
\partial_s F (s, n ) - \overline{\mathcal L}_s^N [F] (s, n ) - \mathcal L_r^N [F] (s, n )  \geq 0 , \quad n \in \mathbb N^M , s \geq 0.  
\]
In order to prove this notice first of all that by the definition of $M $ we have that for every $\rho \in \{ 1, \dots, r \} $ it holds that 
\[ 
|M( n + S_\rho )| = \sum_{ j \in L } m_j^T (n+ S_\rho ) = \sum_{ j \in L } m_j^T n = |M(n )| \quad \text{ and } \quad  |M(n - S_\rho )| = \sum_{ j \in L } m_j^T (n-  S_\rho ) = |M(n)|. 
\]
Therefore, due to the definition of the operator $\mathcal L_r^N  $ we have that 
\begin{align*}
\mathcal L_r^N [F] (s, n ) = 
\mathcal L^N_r (e^{ - b | M (n) | e^{ - \gamma s }} )  = 0. 
\end{align*}
Moreover notice that the function $ n \mapsto F (s, n ) $ is decreasing for every $ s \in [0, T ] $. This is a consequence of the fact that the vectors $\{ m_j \}_{ j = 1 }^L $ of the extremal basis of conserved quantities are non-negative. 
As a consequence we deduce that 
\begin{align*}
   \overline{ \mathcal L}_s^N [F] (s, n )  &=      \sum_{ k \in \Omega_c }  A^N_k (n)   \left[ F(s, n+ e_k ) - F(s, n)  \right] +    \sum_{ k \in \Omega_c } B^N _k (n)   \left[ F(s, n-  e_k ) - F(s, n )   \right] \\
    & \leq \sum_{ k \in \Omega_c } B_k (n)   \left[ F(s, n-  e_k ) - F(s, n )   \right] 
\end{align*}
This, together with the definition of $F$ and of the constants $ \gamma $ and $b$, allows us to deduce that 
\begin{align*}
\partial_s F (s, n ) - \overline{\mathcal L}_s^N [F] (s, n ) - \mathcal L_r^N [F] (s, n )  & \geq \partial_s F (s, n ) - \sum_{ k \in \Omega_c } \beta_k n_k  \left[ F(s, n-  e_k ) - F(s, n )   \right] \\
& = b \gamma  |M (n) | e^{ - \gamma s } F(s,  n ) - \max_{ k \in \Omega_c } \beta_k \sum_{ k \in \Omega_c }  n_k  \left[ F(s, n-  e_k )  -  F(s, n )    \right] \\
& =  b \gamma  |M (n) | e^{ - \gamma s } F(s,  n ) -  \max_{ k \in \Omega_c } \beta_k  F(s, n ) \sum_{ k \in \Omega_c }  n_k  \left[ e^{ b |M (e_k )| e^{ - \gamma s } } -  1 \right]  \\
& \geq  b | M (n) |  e^{ - \gamma s } F(s,  n )  ( \gamma -  \max_{ k \in \Omega_c } \beta_k )  \geq 0.
\end{align*}
Therefore the function $F $ is a supersolution to \eqref{eq: dual}. 
We can apply to equation \eqref{eq:truncated dual} the maximum principle (Lemma \ref{lem:comparison principle truncated})  and  deduce that 
  \begin{equation} \label{exponential bound trunc}
  \psi^N (s, n ) \leq C e^{ - b  | M(n)  | e^{ - \gamma s }} \quad \forall n \in \mathbb N^M, \ s \in [0, T]
  \end{equation}
holds.

\textit{Step 2. Existence of a time dependent solution $ \psi$. }

As a consequence the uniform bound  $ \sup_{ s \in [0, T] } \max_{ n \in \mathbb N^M } \psi^N (s, n )  \leq 1 $ holds.
We deduce that the sequence  $\{  \psi^N \}_{ N \geq 1 } $ is uniformly bounded.
Moreover the bound \eqref{exponential bound trunc} can be used to prove the equicontinuity of the sequence  $\{  \psi^N \}_{ N \geq 1 } $. 
Indeed for every $ n \in \mathbb N^M $ it holds that 
\begin{align*}
\left| \psi^N(t_1, n )  - \psi^N (t_2, n ) \right| &= \int_{ t_1 }^{t_2 } \left| \mathcal L_s [ \varphi] (v,n) + \mathcal L_r [ \varphi](v,n) \right| dv \\
& \leq |t_2 - t_1 | \sup_{ v \in [0, T ] } \max_{ n \in \mathbb N^M} \left| \mathcal L_s [ \varphi] (v,n) + \mathcal L_r [ \varphi](v,n) \right| \\
& \leq C |t_2 - t_1 | \sup_{ v \in [0, T ] } \max_{ n \in \mathbb N^M } e^{ - b  | M(n)  | e^{ - \gamma v }}  \times \\
&\times \left[ \sum_{k \in \Omega_c } \left( A_k (n) + B_k (n) \right) +  \sum_{ \rho=1 }^r \left( R_\rho (n) + R_ { - \rho }(n) \right)  \right] \leq  \bar{C} |t_2 - t_1 |, 
\end{align*} 
for suitable constants $ C $ and $ \bar C $. 
Therefore, there exists a sequence of continuos functions $ \psi(t) =(\psi(t, n ))_{n \in \mathbb N^M} $ that is such that, up to a subsequence, for every $n \in \mathbb N^M $ we have that $ \psi^N (t, n ) \rightarrow \psi(t, n ) $ as $ N \to \infty $. The convergence is uniform for $ t $ in compact sets. 
We deduce that for every $n \in \mathbb N^M $ the function $  t \mapsto  \psi (t, n )   $ is continuously differentiable and satisfies \eqref{eq: dual}. 
Passing to the limit in all the terms of equation \eqref{eq:truncated dual} we deduce that the limit $\psi $ satisfies \eqref{eq:bound for the dual at infinity}. 
Moreover taking the limit as $N \to \infty $ in \eqref{exponential bound trunc} we deduce that the bound \eqref{eq:bound for the dual at infinity} holds. 
\end{proof}

 Using \eqref{eq:bound for the dual at infinity} we can now formulate the maximum principle for the dual equation \eqref{eq: dual}. 
\begin{lemma}[Maximum principle] \label{lem:maximum principle}
    Assume that $\psi (t)=(\psi (t, n ))_{ n \in \mathbb N^M} $ is a sequence of functions that is such that for every $n \in \mathbb N^M $ the function $ t \mapsto f(t, n ) $ is continuously differentiable and satisfies 
        \begin{align} \label{subsolution}
 \partial_s  \psi(s, n)   -  
   \mathcal L_r[ \psi (s) ](n) - \mathcal L_s [ \psi (s) ](n) \leq 0 , \quad s\geq 0,  \quad n \in \mathbb N^M   . 
\end{align}
Then we have that 
        \begin{align*}
       \sup_{ t \in [0, \infty ) } \sup_{ n \in \mathbb N^M } \psi (t, n ) \leq    \sup_{ n \in \mathbb N^M } \psi_0 ( n ). 
    \end{align*}
        Moreover if $\psi $ attains a maximum at time $ t= \overline t  $, then $ \psi $ is constant.  

          Assume that $\psi (t)=(\psi (t, n ))_{ n \in \mathbb N^M} $ is a sequence of functions that is such that for every $n \in \mathbb N^M $ the function $ t \mapsto f(t, n ) $ is continuously differentiable and satisfies 
        \begin{align} \label{supersolution}
 \partial_s  \psi(s, n)   -  
   \mathcal L_r[ \psi (s) ](n) - \mathcal L_s [ \psi (s) ](n) \geq 0 , \quad s\geq 0  \quad  n \in \mathbb N^M   . 
\end{align}
Then we have that 
        \begin{align*}
       \inf_{ t \in [0, \infty ) } \inf_{ n \in \mathbb N^M } \psi (t, n ) \geq    \inf_{ n \in \mathbb N^M }  \psi_0 ( n ). 
    \end{align*}
        Moreover if $\psi$ attains a minimum at time $ t= \overline t  $, then $ \psi $ is constant.  
\end{lemma}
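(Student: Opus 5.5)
The plan is to reduce the statement to a maximum principle on a bounded region, which is available, and to control what happens at infinity using the decay of admissible functions. The difficulty is that $\mathbb N^M$ is unbounded, so a supremum need not be attained. I will therefore use the double exponential bound \eqref{eq:bound for the dual at infinity} of Proposition \ref{prop:existence of solution to the dual problem}. Concretely, I will assume, as is implicit, that the functions $\psi$ under consideration satisfy a growth condition of the form
\[
|\psi(s,n)|\leq C e^{-b|M(n)|e^{-\gamma s}}
\]
with $C$ independent of $n$, or at least that $\psi(s,\cdot)$ is bounded and tends to its infimum at infinity, uniformly on $[0,T]$. I will treat only the subsolution case; the supersolution case follows by replacing $\psi$ with $-\psi$, since the operators $\mathcal L_r$ and $\mathcal L_s$ are linear and annihilate constants.

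\textbf{Reduction to an attained maximum.} Fix $T>0$ and $\varepsilon>0$, and set $\psi_\varepsilon(s,n):=\psi(s,n)-\varepsilon s$. Then $\partial_s\psi_\varepsilon-\mathcal L_r[\psi_\varepsilon]-\mathcal L_s[\psi_\varepsilon]\leq-\varepsilon<0$. Suppose that $\sup_{[0,T]\times\mathbb N^M}\psi_\varepsilon>\sup_n\psi_0$. By the decay assumption, $\psi_\varepsilon$ is small outside a finite set of states $\{|M(n)|\leq R\}$, uniformly in $s\in[0,T]$. The conservation laws have positive entries, so $|M(n)|\to\infty$ as $|n|\to\infty$, and this set really is finite. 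Combined with the continuity of $\psi_\varepsilon$ in $s$, this shows that the supremum is attained at some $(\bar s,\bar n)$ with $\bar s>0$.

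\textbf{Contradiction at the maximum point.} At $(\bar s,\bar n)$ the time derivative satisfies $\partial_s\psi_\varepsilon(\bar s,\bar n)\geq 0$, because either $\bar s$ is interior or $\bar s=T$. On the other hand, every term in $\mathcal L_r[\psi_\varepsilon](\bar n)$ and $\mathcal L_s[\psi_\varepsilon](\bar n)$ has the form (nonnegative rate)$\times[\psi_\varepsilon(\bar s,\bar n\pm v)-\psi_\varepsilon(\bar s,\bar n)]\leq 0$. This contradicts the strict inequality $\partial_s\psi_\varepsilon-\mathcal L_r[\psi_\varepsilon]-\mathcal L_s[\psi_\varepsilon]<0$. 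Hence $\psi\leq\sup_n\psi_0+\varepsilon T$. Letting $\varepsilon\to0$ and then $T\to\infty$ gives the stated bound.

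\textbf{Strong maximum principle.} Suppose the maximum $m$ is attained at $(\bar t,\bar n)$ with $\bar t>0$. The inequality then forces $0\leq\partial_s\psi\leq\mathcal L_r[\psi]+\mathcal L_s[\psi]\leq0$ at that point. Each jump term must therefore vanish, so $\psi(\bar t,\bar n\pm S_\rho)=m$ whenever the corresponding rate is positive, and likewise $\psi(\bar t,\bar n\pm e_k)=m$. Iterating this, $\psi(\bar t,\cdot)=m$ on the set of states reachable from $\bar n$. With mass action kinetics, the sources $e_k$ with $\alpha_k>0$ together with the reactions allow one to move upward, and the sinks allow one to move down. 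I will show that this set is all of $\mathbb N^M$, or at least the communicating class relevant for the later uniqueness argument.

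To propagate the equality to times $t<\bar t$, I will apply the same argument on each $[t,\bar t]$: the function $\psi(\cdot,n)$ is nondecreasing in $s$ whenever it sits at the maximum. This yields constancy in time.

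\textbf{Main obstacle.} I expect two points to be the hardest. The first is justifying that the supremum is attained, which is where a decay or growth hypothesis such as \eqref{eq:bound for the dual at infinity} is indispensable. The second is the irreducibility of the jump graph generated by the reactions together with the sources and sinks, which is needed to spread the maximum to every state. I would handle the second point first, using the source and sink moves to reach $0$ from any $n$ and then any $n$ from $0$, restricting to species in $\Omega_c$ plus reactions, and stating the conclusion on that communicating class if full irreducibility fails.
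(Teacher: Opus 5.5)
Your weak maximum principle is correct and follows the paper's argument. The paper invokes \eqref{eq:bound for the dual at infinity} to get a maximiser $\bar n$ of $\psi(t,\cdot)$. It then repeats the proof of Lemma~\ref{lem:maximum principle trunc}, which differentiates $\psi(t,\bar n)$ as though $\bar n$ did not depend on $t$. Your $-\varepsilon s$ perturbation removes exactly that defect. You also make explicit the decay hypothesis that the paper uses without stating it. One detail to record: the decay of $\psi_0$ gives $\sup_n\psi_0\ge 0$. This is what makes a supremum exceeding $\sup_n\psi_0$ positive, and hence attained on the finite set $\{|M(n)|\le R\}$.

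The gap is in the strong part, which the paper does not prove either; the argument it cites only gives $\sup_n\psi(t,n)\le\sup_n\psi_0(n)$.

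\textbf{Time propagation.} This step is wrong as stated. At a maximum point the subsolution inequality gives $\partial_s\psi\le\mathcal L_r[\psi]+\mathcal L_s[\psi]\le 0$, so $\psi$ is non\emph{increasing} there, not nondecreasing. No monotonicity statement carries the equality to earlier times. What works is a pointwise Gr\"onwall bound. Set $u(s):=m-\psi(s,\bar n)\ge 0$ and let $\lambda(\bar n)$ be the total jump rate out of $\bar n$. Bounding each difference $\psi(s,\bar n+v)-\psi(s,\bar n)$ by $u(s)$ gives $u'\ge-\lambda(\bar n)u$, hence $u\equiv 0$ on $[0,\bar t]$. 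Nothing can be said after $\bar t$. For $\gamma$ large, $-(s-\bar t)_+^2e^{-b|M(n)|e^{-\gamma s}}$ is a decaying subsolution with zero datum. It attains its maximum $0$ at every time $\le\bar t$ and is not constant.

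\textbf{Irreducibility.} This fails in general. Sources and sinks only move species in $\Omega_c$, so any $\mu\in\mathcal M$ with $\mu(k)=0$ for all $k\in\Omega_c$ is invariant under every jump. In Section~\ref{sec:example 1} the number of channels is such a quantity. So $0$ cannot be reached from a state containing a channel, and your route ``reach $0$ by sinks, then everything from $0$'' breaks down. The claim ``$\psi$ is constant'' is then false. Take the dual solution with $\psi_0=-\delta_{n'}$, where $n'$ is a two-channel state. It vanishes on all one-channel states at all times, so its maximum $0$ is attained at every $\bar t>0$, yet $\psi\not\equiv 0$.

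What your argument actually yields, with the fix above, is $\psi\equiv m$ on $[0,\bar t]$ times the set of states forward-reachable from $\bar n$. Under your decay hypothesis that set is infinite, because the sources are always active and raise $|M(n)|$. This forces $m=0$.
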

\begin{proof}
    The bound \eqref{eq:bound for the dual at infinity} implies that for every $ t \geq 0 $ there exists $ \overline n \in \mathbb N^M $ such that $ \sup_{n \in \mathbb N^M } \psi (t, n ) =   \psi (t, \overline n )$. Using the argument used in order to prove Lemma \ref{lem:maximum principle trunc} we deduce the result. 
\end{proof}
    \begin{lemma}[Comparison principle] \label{lem:comparison principle}
    Assume that $ \overline  \psi  $  is a supersolution to \eqref{eq: dual}, i.e. it satisfies \eqref{supersolution} with initial datum $ \overline \psi (0) = \overline \psi_0 $. 
    Then $  \overline \psi (t, n )  \geq  \psi (t, n )  $ for every $ t >0 $, $ n \in \mathbb N^M $. 
    
    Assume that $ \underline  \psi $  is a subsolution to \eqref{eq: dual}, i.e. assume that it satisfies \eqref{supersolution} with initial datum $ \underline \psi (0) = \overline \psi_0 $. 
    Then $  \underline \psi (t, n )  \leq  \psi (t, n )  $ for every $ t >0 $, $ n \in \mathbb N^M $.  
\end{lemma}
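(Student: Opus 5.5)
The plan is to apply the maximum principle, Lemma \ref{lem:maximum principle}, to the difference of the two functions. Consider the supersolution case, and let $\psi$ be the solution of \eqref{eq: dual} with initial datum $\psi_0$ constructed in Proposition \ref{prop:existence of solution to the dual problem}. We implicitly assume $\overline\psi_0 \geq \psi_0$, as in Lemma \ref{lem:comparison principle truncated}. Define $w := \psi - \overline\psi$. Both $\mathcal L_r$ and $\mathcal L_s$ are linear and act pointwise in $n$ through finitely many neighbours $n\pm S_\rho$, $n\pm e_k$. Subtracting the equation for $\psi$ from the inequality \eqref{supersolution} for $\overline\psi$ therefore gives
\[
\partial_s w(s,n) - \mathcal L_r[w(s)](n) - \mathcal L_s[w(s)](n) \leq 0 , \quad s \geq 0, \ n \in \mathbb N^M ,
\]
so $w$ is a subsolution in the sense of \eqref{subsolution}, with $w(0) = \psi_0 - \overline\psi_0 \leq 0$. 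The first part of Lemma \ref{lem:maximum principle} then gives $\sup_{t,n} w(t,n) \leq \sup_n w(0,n) \leq 0$, that is $\overline\psi \geq \psi$. The subsolution case works the same way, with $w := \underline\psi - \psi$, or equivalently by applying the first part to $-\underline\psi$ via linearity.

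The main obstacle is that Lemma \ref{lem:maximum principle} uses that the supremum of $w(t,\cdot)$ over $n$ is attained. For $\psi$ itself this follows from the decay bound \eqref{eq:bound for the dual at infinity}, but a general supersolution $\overline\psi$ carries no such decay. The first thing I would try is to add a growth or decay hypothesis on $\overline\psi$: assume $\overline\psi$ is bounded below, say $\overline\psi \geq 0$. Then $w \leq \psi \leq C e^{-b|M(n)|e^{-\gamma s}}$, which tends to $0$ as $|n| \to \infty$ uniformly for $s \in [0,T]$.

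With this hypothesis the attainment argument goes as follows. Suppose $\sup_n w(t,\cdot) = \varepsilon > 0$ at some time $t$. Then the supremum is attained on a finite set of $n$, because outside a large ball $w < \varepsilon/2$. At a maximum point $\overline n$, every term of $\mathcal L_r[w] + \mathcal L_s[w]$ at $\overline n$ is nonpositive, since the rates are nonnegative and each difference $w(\text{neighbour}) - w(\overline n) \leq 0$. Hence $\partial_s w(s,\overline n) \leq 0$, and the Dini-derivative argument of Lemma \ref{lem:maximum principle trunc} shows that $t \mapsto \max_n w(t,n)$ is nonincreasing. It follows that $w \leq 0$.

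If instead one wants no sign assumption on $\overline\psi$, the alternative is to pass through the truncation. One applies Lemma \ref{lem:comparison principle truncated} on $\bar I^M$ and then lets $N\to\infty$, using the convergence $\psi^N \to \psi$ from Step 2 of Proposition \ref{prop:existence of solution to the dual problem}. Here one has to check that the restriction of $\overline\psi$ remains a supersolution of the truncated problem; this holds if $\overline\psi$ is nonincreasing along sink directions, since the truncated operators only drop terms. I expect this verification to be the delicate point.
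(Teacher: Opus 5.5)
Your main argument is the route the paper intends. The paper gives no separate proof of this lemma and presents it as an immediate consequence of Lemma~\ref{lem:maximum principle} applied to a difference, which is your $w=\psi-\overline\psi$.

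What you add is correct and genuinely needed. The paper's proof of Lemma~\ref{lem:maximum principle} gets attainment of $\sup_n$ from \eqref{eq:bound for the dual at infinity}, but that bound controls only the constructed solution $\psi$. Without some condition at infinity the lemma is in fact false. For instance, take species $1$ with a source/sink, plus an unrelated reaction $(2)\rightleftarrows(3)$ (conservative with $m=(1,1,1)$). On functions of $n_1$ alone $\mathcal L_r$ vanishes. Set $v(s,0)=g(s)$ with $g$ a smooth bump supported in $[1,2]$, and solve the equation at $n_1$ for $v(s,n_1+1)$, which is possible since $\alpha_1>0$. This gives a nonzero solution with $v(0,\cdot)=0$, so one of $\pm v$ is a supersolution that is not $\geq\psi\equiv0$.

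Your hypothesis $\overline\psi\ge0$ repairs this. Combined with $|M(n)|\ge c|n|$ (conservativity makes $\sum_j m_j$ strictly positive), it yields $\limsup_{|n|\to\infty}\sup_{s\in[0,T]}w(s,n)\le0$. Your Dini-derivative argument over the finitely many active maximisers is then rigorous. It is also the right repair of the proof of Lemma~\ref{lem:maximum principle trunc}, which freezes a maximiser $\overline n$ chosen at one time.

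There are two smaller points. First, in the subsolution case the hypothesis must be adapted, not copied. You need $\limsup_{|n|\to\infty}\sup_{s\in[0,T]}(\underline\psi-\psi)(s,n)\le0$, for example $\psi\ge0$ (true for $\psi_0\ge0$ by the truncation construction) together with $\underline\psi(s,n)\to0$ uniformly on $[0,T]$. This is the situation of $\Psi^+_{\varepsilon_R}$ in Lemma~\ref{cor:subsolution in R}. Reducing to the first part via $-\underline\psi$ would instead require $\underline\psi\le0$, which excludes that use.

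Second, your truncation alternative does not work as stated. Truncation drops reaction and source terms outside $I^M$ and sink terms outside $\bar I^M$. The restriction of $\overline\psi$ remains a supersolution only if every dropped term is nonnegative. Monotonicity of $\overline\psi$ in $n_k$ fixes the sign of only one of $\alpha_k[\overline\psi(n+e_k)-\overline\psi(n)]$ and $\beta_k n_k[\overline\psi(n-e_k)-\overline\psi(n)]$. The reaction terms have no definite sign unless $\overline\psi$ depends on $n$ only through $M(n)$. The paper handles its specific $F$ by direct computation, not by restriction.
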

An immediate consequence of the maximum principle is the uniqueness of solutions to the dual problem \eqref{eq: dual}. 
\begin{corollary}
Assume that $(\Omega, \mathcal R, \mathcal R_s, \mathcal K , \mathcal K_s) $ is a mass action  chemical system with sources and sinks and that the set of reactions $\mathcal R $ is conservative. Let $T>0 $, then there exists a unique sequence of functions $\psi (t)=(\psi( t , n ))_{ n \in \mathbb N^M} $ that is such that for every $ n \in \mathbb N^M $ the function  $ t \mapsto \psi(t, n )  $ is continuously differentiable and satisfies equation \eqref{eq: dual} with initial condition $\psi_0 \in c_{00} (\mathbb N^M )$.
\end{corollary}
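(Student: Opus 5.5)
The plan is to combine existence from Proposition~\ref{prop:existence of solution to the dual problem} with uniqueness from the maximum principle (Lemma~\ref{lem:maximum principle}), applied to the difference of two solutions. This fully proves uniqueness only among solutions whose supremum and infimum over $n$ are attained at each time, for instance bounded solutions decaying in $|M(n)|$. Extending it to all $C^1$ solutions, as the corollary is worded, is the step I cannot close here.

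\emph{Existence.} For given $T>0$ and $\psi_0\in c_{00}(\mathbb N^M)$, Proposition~\ref{prop:existence of solution to the dual problem} gives a solution of \eqref{eq: dual} on $[0,T]$ satisfying \eqref{eq:bound for the dual at infinity}. The proposition assumes $\sup_n\psi_0\le 1$. Since \eqref{eq: dual} is linear, a general datum can be reduced to this case by splitting $\psi_0=\psi_0^+-\psi_0^-$ and rescaling each part.

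\emph{Setting up uniqueness.} Let $\psi_1,\psi_2$ be two solutions with the same datum $\psi_0$, each $C^1$ in $t$ for every $n$. The difference $w:=\psi_1-\psi_2$ is $C^1$ in $t$, solves \eqref{eq: dual}, and has $w(0,\cdot)=0$. It is therefore both a subsolution \eqref{subsolution} and a supersolution \eqref{supersolution}. If Lemma~\ref{lem:maximum principle} applies to $w$ and to $-w$, then $\sup_{t,n} w\le 0$ and $\inf_{t,n} w\ge 0$, so $w\equiv 0$.

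\emph{Where the argument needs a growth condition.} The proof of Lemma~\ref{lem:maximum principle} uses that for each $t$ the supremum over $n$ is attained at some $\overline n$. Then $\frac{d}{dt}\psi(t,\overline n)=\mathcal L_r[\psi](\overline n)+\mathcal L_s[\psi](\overline n)\le 0$, because every difference $\psi(n')-\psi(\overline n)$ appearing in $\mathcal L_r,\mathcal L_s$ is nonpositive and all rates are nonnegative. For solutions built in Proposition~\ref{prop:existence of solution to the dual problem}, attainment follows from \eqref{eq:bound for the dual at infinity}: $\psi(t,n)\to 0$ as $|M(n)|\to\infty$, and $|M(n)|\ge c|n|$ because the extremal basis is positive on every species. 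For $w=\psi_1-\psi_2$ with $\psi_2$ arbitrary, no such decay is known, so the lemma cannot be applied directly.

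\emph{First attempt at the missing step: a barrier.} I would take $V(n):=1+|M(n)|$. The reaction part vanishes on it, $\mathcal L_r[V]=0$, since each $m_j$ is a conserved quantity. The sources add the constant $\sum_k\alpha_k|M(e_k)|$ and the sinks only decrease $V$, so $\mathcal L_s[V]\le cV$ for some $c>0$. Hence for $\varepsilon>0$ the function $\Phi_\varepsilon(t,n):=\varepsilon e^{ct}V(n)$ is a supersolution. If $w=o(V)$ at infinity uniformly on $[0,T]$, then $w-\Phi_\varepsilon$ is a subsolution with datum $\le 0$ whose supremum is attained, since it tends to $-\infty$. Lemma~\ref{lem:maximum principle} then gives $w\le\Phi_\varepsilon$, and letting $\varepsilon\to 0$ gives $w\le 0$; the same applied to $-w$ gives $w\ge 0$.

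\emph{The expected obstacle.} This barrier argument needs a growth bound on an arbitrary solution, for example $\sup_{s\le T}|\psi(s,n)|=o(|M(n)|)$. Such a bound is not supplied by the hypotheses, which are only $C^1$ in $t$ for each fixed $n$.

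The structure of \eqref{eq: dual} does not close the gap by itself. The reactions preserve $|M(n)|$, the sinks lower it, and the sources raise it. The equation on a level set of $|M|$ is therefore coupled upward to all higher levels, so the infinite system cannot be solved level by level from finitely many ODEs.

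What I would try first is to show that any solution with datum in $c_{00}$ automatically satisfies a growth bound. One route is to integrate \eqref{eq: dual} in time and compare with the double-exponential supersolution $F$ of Step~1 in the proof of Proposition~\ref{prop:existence of solution to the dual problem}, propagating information downward from large $|M(n)|$.

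If no such bound can be obtained, the uniqueness class has to be stated explicitly: solutions that are bounded, or satisfy \eqref{eq:bound for the dual at infinity}, or grow sublinearly in $|M(n)|$. That is exactly the class used later in the duality argument. In that class the argument above is complete, with the maximum principle supplying uniqueness and Proposition~\ref{prop:existence of solution to the dual problem} supplying existence.
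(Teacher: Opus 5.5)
Your route is the paper's: existence from the construction via the truncated problems $\psi^N$, and uniqueness by applying Lemma~\ref{lem:maximum principle} to the difference of two solutions. The paper's entire proof is the remark that uniqueness is ``an immediate consequence of the maximum principle.'' The gap you isolate is genuine, and the paper's argument has it too. The proof of Lemma~\ref{lem:maximum principle} gets attainment of $\sup_n\psi(t,n)$ from \eqref{eq:bound for the dual at infinity}. That bound is only established for the constructed solution, not for an arbitrary solution that is merely $C^1$ in $t$ for each $n$.

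Do not pursue the ``automatic growth bound'' route, because the gap cannot be closed: uniqueness in the class stated in the corollary fails. Take $\Omega=\{1,2\}$, the single reaction $(1)\rightleftarrows(2)$ (conservative, with $|M(n)|=n_1+n_2$), and a source/sink of species $1$. In \eqref{eq: dual} at the state $(j,\ell)$, the only value on the level $\{|M|=j+\ell+1\}$ is $\alpha_1\psi(s,(j+1,\ell))$. So that equation can be solved for $\psi(s,(j+1,\ell))$ in terms of $\partial_s\psi(s,(j,\ell))$ and values on levels $\le j+\ell$. Prescribe $\psi(s,(0,0))=g(s)$, with $g(s)=e^{-1/s}$ for $s>0$ and $g(0)=0$, and $\psi(s,(0,\ell))=0$ for $\ell\ge 1$. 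Then define all remaining values level by level in this way. Each equation is used exactly once. Each value is a finite constant-coefficient combination of derivatives of $g$, hence $C^\infty$ on $[0,T]$ and zero at $s=0$. This gives a solution of \eqref{eq: dual} with datum $0\in c_{00}(\mathbb N^2)$ that is not identically zero, alongside $\psi\equiv 0$. It is a Tychonoff-type example, and it is made possible precisely by the upward coupling through the sources that you pointed out.

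So the corollary is only true with a growth restriction, which is your fallback, and in that class your argument is complete. For $V=1+|M(n)|$ you have $\mathcal L_r[V]=0$ and $\mathcal L_s[V]\le cV$. Moreover $w-\varepsilon e^{ct}V\to-\infty$ uniformly for $t\in[0,T]$, so its supremum is attained on a fixed finite set and the maximum principle applies. This gives uniqueness among solutions with $\sup_{s\le T}|\psi(s,n)|=o(|M(n)|)$.

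You can enlarge the class by replacing $V$ with $e^{\mu|M(n)|}$. $\mathcal L_r$ still annihilates it, the sink part is nonpositive because this function is nondecreasing in $n$, and the source part is at most $\sum_k\alpha_k(e^{\mu|M(e_k)|}-1)e^{\mu|M(n)|}$. You then get uniqueness among all solutions of at most exponential growth in $|M(n)|$; in particular, the example above must grow faster than any exponential. Either class contains the bounded solutions, which covers every solution the paper later feeds into its comparison and duality arguments.
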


We are now ready to prove the uniqueness of the time dependent solution to \eqref{eq:master with sources weak}. 
\begin{proof}[Proof of Theorem \ref{thm:existence and uniqueness of time dep}]
    The existence of a time dependent solution $f$ was proven in Theorem \ref{thm:existence of time dep} we now prove that the solution is unique. 
    Assume $f_1 $ and $f_2 $ to are two solutions to equation \eqref{eq:master with sources weak}     with the same initial datum $ f_1(0, \cdot) = f_2(0, \cdot )  $. 
    Then for every $ \varphi \in C^1((0, \infty) , c_{00}(\mathbb N^M ) $ it holds that 
       \begin{align*}
    \sum_{  n \in \mathbb  N^M  } \varphi(t,n) [ f_1 (t, n) - f_2 (t, n ) ]  &= \sum_{  n \in \mathbb  N^M  } \varphi(0, n) [  f_1 (0,  n) - f_2(0, n)]  \\
    & + \sum_{ n  \in \mathbb  N^M }  \int_0^t \left[ \mathcal L_r[ \varphi](n,s) +  \mathcal L_s [ \varphi](s,n ) + \partial_s \varphi (s, n )  \right]   [ f_1 (s,n) - f_2 (s, n ) ] ds. 
\end{align*}

       Assume that  $\psi $ is a solution to \eqref{eq: dual} with initial datum $ \psi_0 (n ) = e_{ \overline n } $ for some $ \overline n \in \mathbb N^M $. Proposition \ref{prop:existence of solution to the dual problem} guarantees that $ \psi $ satisfies the exponential bound \eqref{eq:bound for the dual at infinity}. 
Since  $\sup_{ n \in \mathbb N^M } f_k(t, n ) \leq 1 $ for $ k \in \{ 1, 2 \} $ and since the solution to the dual problem \eqref{eq: dual} satisfies the  bound \eqref{eq:bound for the dual at infinity} we can consider the test function $\varphi (t, n ) =  \psi(T- t  , n ) $ in equation \eqref{eq:master with sources weak}.

    We deduce that for every $ t \in [0, T] $ it holds that 
   \begin{align*}
    \sum_{  n \in \mathbb  N^M  } \varphi(t,n) [  f_1 (t, n) - f_2(t, n ) ] = \sum_{  n \in \mathbb  N^M  } \varphi(0, n) [  f_1 (0, n) - f_2(0, n ) ]=0 . 
\end{align*}
Evaluating at $t = T $ we deduce that 
   \begin{align*}
   0=  \sum_{  n \in \mathbb  N^M  } \varphi(T,n) \left[  f_1 (T, n)  - f_2 (T, n ) \right] =  \sum_{  n \in \mathbb  N^M  } \psi_0(n)  \left[  f_1 (t, n)  - f_2 (t, n ) \right] =  f_1 (t, \overline  n)  - f_2 (t, \overline n ). 
\end{align*}
Due to the fact that $ \overline n \in \mathbb N^M $ is arbitrary we conclude that $  f_1 (t,  n)  =  f_2 (t, n ) $ for every $n \in \mathbb N^M $. 
\end{proof}

\section{Stability of the stationary solution} \label{sec:convergence}
The goal of this section is first of all to prove that the master equation \eqref{eq:master with sources weak} corresponding to a chemical system that has conservative reactions has a unique stationary solution $\overline f $. Moreover we will also prove that the time dependent solution $f $ to the master equation \eqref{eq:master with sources weak} tends to the stationary solution $\overline f $ as $t \to \infty $. 
We start this section stating the definition of stationary solution. 
\begin{definition}[Stationary solution to the master equation] \label{def:stationary solution}
A steady state to \eqref{eq:weak form master trunc} is a $g \in \mathcal P ( \mathbb N^M ) $ that satisfies  
\begin{align} \label{eq:master with sources weak stationary}
0= \sum_{ n \in \mathbb N ^M }  g ( n)  \mathcal L_s [\varphi] ( n ) +  \sum_{ n \in \mathbb N ^M }   g ( n)  \mathcal L_r   [\varphi] (n ) 
\end{align}
for every $ \varphi \in c_{ 00}( \mathbb N^M)$. 
\end{definition}
\subsection{Existence of a stationary solution}
The goal of this section is to prove the existence of stationary solutions for chemical systems with conservative reactions. 
\begin{theorem} \label{thm:steady state}
    Assume that $(\Omega, \mathcal R, \mathcal R_s, \mathcal K , \mathcal K_s) $ is a mass action chemical system with sources and sinks and that the set of reactions $\mathcal R $ is conservative. 
    Then there exists a steady state $ f_s \in \mathcal P(\mathbb N^M ) $. 
\end{theorem}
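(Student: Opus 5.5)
We will obtain the steady state as a fixed point of the semigroup generated by the master equation, restricted to a convex set that is compact in the $\ast$-weak topology and built from the moment bound of Theorem \ref{thm:existence of time dep}. Fix $m \in \mathcal M_+ \cap \mathbb R_+^M$, which exists because $\mathcal R$ is conservative. Let $\gamma,\delta$ be the constants appearing in \eqref{invariant region}, and set $K := \gamma/\delta$. Define
\[
\mathcal X_K := \Big\{ g \in \mathcal P(\mathbb N^M) : \sum_{n \in \mathbb N^M} m^T n \, g(n) \leq K \Big\}.
\]
This set is convex and nonempty. Since $m$ has strictly positive entries, membership in $\mathcal X_K$ forces $\sum_{|n|>L} g(n) \leq K/(L \min_k m_k)$, so $\mathcal X_K$ is tight. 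A tight family of probability measures is relatively compact in the $\ast$-weak topology, and the limit still has mass one. Fatou's lemma shows that the moment constraint is preserved under pointwise limits, so $\mathcal X_K$ is closed. Hence $\mathcal X_K$ is a compact convex subset of the locally convex space $\ell^1_{\text{weak}}(\mathbb N^M)$, and on it the $\ast$-weak topology coincides with pointwise convergence.

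For $t\ge0$ let $\Phi_t(f_0) := f(t,\cdot)$, where $f$ is the unique solution of Theorem \ref{thm:existence and uniqueness of time dep}; uniqueness is where mass action is used. The key estimate is the Gr\"onwall inequality \eqref{invariant region}, which passes to the limit $N\to\infty$ and gives $u(t) \leq u(0)e^{-\delta t} + K(1-e^{-\delta t})$ for $u(t) := \sum_n m^T n f(t,n)$. Consequently $u(0) \leq K$ implies $u(t) \leq K$, so $\Phi_t(\mathcal X_K) \subset \mathcal X_K$.

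The main obstacle is proving that $\Phi_t$ is continuous on $\mathcal X_K$ for the $\ast$-weak topology. Our plan for this step is as follows.
\begin{itemize}
\item Take $f_0^j \to f_0$ in $\mathcal X_K$.
\item The solutions $f^j$ are equicontinuous in time against test functions in $c_{00}$ (same computation as in Step 1 of the proof of Theorem \ref{thm:existence of time dep}). They are also uniformly tight, by the moment bound.
\item By Ascoli--Arzel\`a, a subsequence converges, uniformly on compact time intervals, to a limit $\tilde f$ taking values in $\mathcal X_K$.
\item Passing to the limit in the weak formulation \eqref{eq:master with sources weak} is straightforward, because $\mathcal L_r[\varphi]$ and $\mathcal L_s[\varphi]$ have compact support when $\varphi \in c_{00}$. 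So $\tilde f$ is a solution with initial datum $f_0$.
\item Uniqueness from Theorem \ref{thm:existence and uniqueness of time dep} then gives $\tilde f = \Phi_\cdot(f_0)$. Since every subsequence has a further subsequence with this same limit, the whole sequence converges.
\end{itemize}

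With continuity in hand, the Schauder--Tychonoff fixed point theorem gives a fixed point $g_t \in \mathcal X_K$ of $\Phi_t$ for each $t>0$. By uniqueness, $\Phi_s \circ \Phi_t = \Phi_{s+t}$, so $g_{2^{-k}}$ is fixed by $\Phi_{j2^{-k}}$ for every $j \in \mathbb N$. By compactness there is a subsequence $g_{2^{-k}} \to g$. For a dyadic time $t$, the identity $\Phi_t(g_{2^{-k}}) = g_{2^{-k}}$ holds for all $k$ large, and continuity of $\Phi_t$ gives $\Phi_t g = g$. Continuity of $t\mapsto\Phi_t g$ then extends this to all $t \geq 0$.

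Finally, the solution $f(t)\equiv g$ is constant in time, so the weak formulation reduces to \eqref{eq:master with sources weak stationary}, and $g$ is a steady state in the sense of Definition \ref{def:stationary solution}.
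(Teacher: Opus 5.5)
Your proof is correct given the results you cite. It follows the paper's overall plan: a fixed point of the evolution on the same invariant set $\chi_{\gamma\delta}=\mathcal X_K$, then a fixed point common to all $t$. Two steps are carried out differently.

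The paper applies the fixed-point theorem to the truncated semigroup $S(t)$ of \eqref{eq:master without sources truncated}. It proves $\ast$-weak continuity by duality with the truncated backward equation, obtains truncated steady states $g^N\in\chi_{\gamma\delta}$, and only at the end lets $N\to\infty$ in the stationary weak formulation. You use the semigroup $\Phi_t$ of the full problem and get continuity softly, from equicontinuity, tightness, Ascoli--Arzel\`a and identification of the limit by uniqueness.

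What your route buys:
\begin{itemize}
\item It removes the final limit in $N$.
\item It avoids having to justify \eqref{invariant region} for the truncated flow started from data of infinite support; the paper derives that inequality only for compactly supported solutions.
\item It tightens two points the paper passes over quickly. The fixed-point theorem needed is Schauder--Tychonoff, not Brouwer. A fixed point common to all $t$ must also be extracted, and your dyadic argument does this.
\end{itemize}

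What it costs: continuity of $\Phi_t$ and the law $\Phi_s\circ\Phi_t=\Phi_{s+t}$ both rest on the uniqueness part of Theorem \ref{thm:existence and uniqueness of time dep}. That means they depend on the dual-problem estimates and on mass action. The paper's construction relies only on the truncated problem (Proposition \ref{prop:existence truncated}) and never uses uniqueness for the full problem.

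Neither fixed-point argument is actually necessary. Take a single solution started in $\mathcal X_K$ and its Ces\`aro means $\mu_T:=\frac1T\int_0^T f(t,\cdot)\,dt$. These stay in $\mathcal X_K$ and satisfy $\bigl|\sum_n\mu_T(n)\,(\mathcal L_r[\varphi]+\mathcal L_s[\varphi])(n)\bigr|\le 2\|\varphi\|_\infty/T$. Since $\mathcal L_r[\varphi]+\mathcal L_s[\varphi]\in c_{00}(\mathbb N^M)$, any $\ast$-weak limit point is already a steady state.

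One caveat applies equally to your proof and to the paper's. The last inequality in \eqref{invariant region} uses $\sum_{k\in\Omega_c}\beta_k n_k\,m^Te_k\ge\delta\,m^Tn$. Because $m$ is strictly positive, this fails at $n=e_j$ for every $j\notin\Omega_c$.

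A concrete failure: take $(1)\rightleftarrows(2)$ with a source and sink of species $1$ only, and $m=(1,1)$. Any initial datum supported on $\{n_1=0\}$ with $u(0)=K$ has $u'(0)=\alpha_1>0$, so $\mathcal X_K$ is not invariant.

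So the invariant-set step, yours and the paper's, is justified only when $\Omega_c=\Omega$. When some species have no sink, as for the channel states in Section \ref{sec:examples}, a different uniform moment bound is required.
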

\begin{proof}
In order to prove the result we aim at using Brouwer fixed point theorem.
In particular we use a type of argument that has been extensively used in order to prove the existence of self-similar solution to a class of coagulation equations, see for instance \cite{ferreira2022self}.
In particular, we will use inequality \eqref{invariant region} to define a compact convex invariant region. 
In order to use inequality \eqref{invariant region} to construct an invariant region it is convenient to prove the existence of a truncated stationary solution $g^N $, i.e. a solution to 
\begin{align} \label{eq:master with sources weak stationary}
0= \sum_{ n \in \mathbb N ^M }  g ( n)  \mathcal L^N_s [\varphi] ( n ) +  \sum_{ n \in \mathbb N ^M }   g ( n)  \mathcal L^N_r   [\varphi] (n ). 
\end{align}

    Consider the truncated master equation  \eqref{eq:master without sources truncated}. 
    Proposition \ref{prop:existence truncated} guarantees the existence of a solution $f^N  $ to equation \eqref{eq:master without sources truncated}. 
    We define the semigroup $ \{ S(t) \}_{ t \geq 0 } $ with values in  $ \mathcal P (\mathbb N^M )   $ that is defined as $S(t) f_0 = f^N (t, \cdot) $ for every $ t >0 $, where $f^N $ is the solution to \eqref{eq:master without sources truncated} with respect to the initial datum $ f_0 \in \mathcal P  (\mathbb N^M )$. We plan to use Brouwer fixed point theorem (cf. \cite{evans2022partial}) to prove that for every $t >0 $ there exists  a $ \overline f_t \in \mathcal P (\mathbb N^M ) $ that satisfies $\overline f_t $ satisfying $S(t) \overline f_t = \overline f_t $. 
    To this end we first of all prove that the operator $S(t) $ is continuous in the $\ast$-weak  topology. To this end we use a similar approach to the one that we used to prove the uniqueness of the solution $f$ to \eqref{eq:master with sources weak}. 
    
    Consider the solution $f^N_1 $ to equation \eqref{eq:master with sources weak} with initial condition $f^1_0$ and the solution $f^N_2$ with initial condition $f^2_0$. Assume that the two initial conditions $f^1_0 $ and $f^2_0 $ are close in the $\ast$-weak topology. 
The goal is to construct a test function $ \varphi $ that is such that $ \varphi (T) = \overline \varphi $ has compact support and
\[ 
\sum_{ n \in \mathbb N^M }  \overline \varphi ( n ) \left(  f^N_1 (T, n ) - f^N_2 (T, n ) \right) = \sum_{ n \in \mathbb N^M }  \varphi_0 ( n ) \left(  f_0^1 ( n ) - f_0^2 ( n ) \right). 
\] 
To this end recall that for every final datum $ \varphi (T) = \overline \varphi \in c_{ 00 } (\mathbb N^M)   $ the equation $ \partial_t  \varphi + \mathcal L^N [ \varphi ] =0 $ has a solution  $ \varphi (t, n) $ with support in $ n \in \bar{I} ^M $.

Moreover the solution $\varphi$ satisfies the exponential bound $\varphi (t, n ) \leq C e^{ - b M(n) } e^{- \gamma t } $. 
Therefore we can consider the solution $ \varphi $ as a test function in \eqref{eq:master with sources weak} and obtain that 
\begin{align*} 
\sum_{  n \in \mathbb  N^M  } \overline \varphi(n) \left(  f_N^1  (T , n) - f_N^2 (T, n ) \right)  &= \sum_{  n \in \mathbb  N^M  } \varphi_0( n)  \left(  f^1_0  (n) - f^2_0 ( n ) \right) \\
& + \sum_{ n  \in \mathbb  N^M }  \int_0^t \left[ \mathcal L^N_r[ \varphi](n,s) + \bar{ \mathcal L}^N_s [ \varphi](s,n ) - \partial_s \varphi (s, n )  \right]  \left(  f^N_1  (s, n) - f^N_2 (s, n ) \right)   ds  \\ \nonumber
& = \sum_{  n \in \mathbb  N^M  } \varphi_0( n)  \left(  f^1_0  (n) - f^2_0 ( n ) \right) \\
& + \sum_{ n  \in \bar{I}^M }  \int_0^t \left[ \mathcal L^N_r[ \varphi](n,s) +  \mathcal L^N_s [ \varphi](s,n ) - \partial_s \varphi (s, n )  \right]  \left(  f^N_1  (s, n) - f^N_2 (s, n ) \right)   ds  \\
    & = \sum_{  n \in \mathbb  N^M  } \varphi_0( n)  \left(  f^1_0  (n) - f^2_0 ( n ) \right). 
    \end{align*}  
    We can then use the fact that $\varphi_0 (n) \leq C e^{ -  b M(n) } $ to approximate $\varphi_0$ with test functions that have compact support and to deduce that for every $ \overline \varphi $
    \[
    \sum_{  n \in \mathbb  N^M  } \overline \varphi(n) \left(  f^N_1  (T , n) - f_N^2 (T, n ) \right)  \to 0 \text { as } f^0_1 \rightharpoonup^\ast f^0_2. 
    \]
    This proves the the $\ast$-weak continuity of the operator $S(t)$. 
    
    We now prove that the operator has a compact convex invariant region. 
    Indeed, inequality \eqref{invariant region} guarantees that the set
    \[ 
    \chi_{\gamma \delta} := \left\{ h  \in  \mathcal P (\mathbb N^M )  : \sum_{ n \in \mathbb N^M } m^T n h (n) \leq \dfrac{\gamma }{\delta} \right\},  
    \]
     where $\gamma :=  |\Omega_c| \max_{k \in \Omega_c }  \alpha_k ( m^T e_k ) >0 $ and $ \delta = \min_{\Omega_c } \beta_k >0  $, is an  invariant region for $S(t)$. 
     Moreover the set $\chi_{\gamma\delta}   $ is  a convex set and is compact in the $\ast$-weak topology of $\ell^1 (\mathbb N^M ) $. 
     We then apply Brouwer fixed point theorem in order to prove that there exists a $ \overline f_t \in \mathcal P (\mathbb N^M ) $ that satisfies $S(t) \overline f_t = \overline f_t $.

    In order to conclude the proof we need to show that for every $ f_0 $ the map $t \mapsto S(t) f_0 $ is $\ast-$weak continuous. 
    This follows by the fact that for every $\varphi \in c_{ 00} ( \mathbb N^M ) $ we have that
    \begin{align*}
        \sum_{ n \in \mathbb N^M }  \varphi (n) \left[  f^N (t_1, n ) - f^N (t_2, n ) \right]  & = \sum_{ n \in \mathbb N^M } \left(\mathcal L_r^N [\varphi] (  n ) + \mathcal L_s^N [\varphi] ( n ) \right) \int_{ t_1 }^{t_2 }  f^N (s, n )  ds \\
        & \leq C_\varphi (t_2- t_1)  
        \end{align*}
    for every $ t_2 > t_1 $ and for a constant $C_\varphi>0 $ that depends on $\varphi $. 
Combining the $\ast-$weak continuity of $ t \mapsto f(t) f_0 $ and the existence of a fixed point  $S(t) \overline f_t = \overline f_t $  for every time we deduce that there exists a $ g^N \in \mathcal P (\mathbb N^M ) $ that is such that $ S(t) g^N = g^N $ for every $ t >0 $. 

We now aim at taking the limit as $N \to \infty$. To this end notice that the sequence $ \{ g^N \}_{ N \geq 1 } \subset \chi_{\gamma \delta} $ where $ \chi_{\gamma \delta} $ is compact in the  $\ast-$weak topology of $ \ell^1 ( \mathbb N^M ) $. As a consequence, up to selecting a subsequence, we have that there exists $ g \in  \ell^1 ( \mathbb N^M ) $ such that 
\[ 
g^N   \rightharpoonup^\ast  g  \  \text{ as  } N \to \infty. 
\]
In particular this implies that 
\begin{align*}
\lim_{N \to \infty }     \sum_{ n \in \mathbb N ^M }  g^N  ( n)  \mathcal L_s [\varphi] ( n )=  \sum_{ n \in \mathbb N ^M }  g  ( n)  \mathcal L_s [\varphi] ( n ) 
    \end{align*} 
    as well as 
    \begin{align*} 
 \lim_{N \to \infty }     \sum_{ n \in \mathbb N ^M }   g^N ( n)  \mathcal L_r   [\varphi] (n )  = \sum_{ n \in \mathbb N ^M }  g  ( n)  \mathcal L_r [\varphi] ( n ). 
\end{align*}
As a consequence, since it also holds that 
\begin{align*}
\lim_{N \to \infty }     \sum_{ n \in \mathbb N ^M }  g  ( n)  \left[ \mathcal L_s [\varphi] ( n ) - \mathcal L^N_s [\varphi] ( n ) \right]= 0 \text{ and } \lim_{N \to \infty }     \sum_{ n \in \mathbb N ^M }  g  ( n)  \left[ \mathcal L_r [\varphi] ( n ) - \mathcal L^N_r [\varphi] ( n ) \right]= 0
    \end{align*}
we deduce  that $g $ is a solution to \eqref{eq:master with sources weak}. 
The fact that $ \sum_{ n \in \mathbb N^M} g (n) =1 $ follows by the fact that $ \sum_{ n \in \mathbb N^M} g_N (n) =1 $  for every $N \geq 1$ and by the fact that $g $ belongs to the set $\chi_{\gamma \delta }  $, hence $ \sum_{n \in \mathbb N^M } |n| g(n) < C_0 $ where $C_0>0$ does not depend on time. 
\end{proof}  

\subsection{Convergence to the steady state}

The aim of this section is to prove that the stationary solution $\overline f$  to the master equation \eqref{eq:master with sources weak} is unique and, moreover, for any initial datum the time dependent solution to the master equation \eqref{eq:master with sources weak} converges to the unique stationary solution $\overline f $ as time tends to infinity, i.e. we aim at proving Theorem \ref{thm: convergence to steady state}. 
 
To this end, it is convenient to find a suitable subsolution to equation \eqref{eq: dual}. 
\begin{proposition} \label{prop:subsolution dual}
Let $ \varepsilon >0 $ and $c_0 \geq 1 $. 
Let us consider the function $\Psi_\varepsilon $ defined as 
\begin{equation} \label{psi epsilon}
\Psi_\varepsilon (t, n ) := \lambda (t) G (t,  \varepsilon |M(n)| ) 
\end{equation}
where $M (n) $ is defined as in \eqref{M(n)} and where the function $ \lambda: [ 0, \infty ) \mapsto [ 0 , \infty) $ is defined as 
\[ 
\lambda (t) := c_0 \exp \left( - \dfrac{ \varepsilon c (1- e^{ - \delta t } ) }{\delta  }  \right)  \ \text{ and }\ 
G(t, \xi ) := \dfrac{ \exp \left( - a \xi e^{- \delta t } \right)  }{ 1+\exp \left( - a \xi e^{- \delta t } \right) }
\]
where $a \geq 0$,  $ \delta \geq |\Omega_c | \left( \max_{ k \in \Omega_c } M(e_k) \right)^2  $ and  $c\geq \dfrac{a |\Omega_c | \min_{ k \in \Omega_c } ( \alpha_k M(e_k )) }{2}$. 
Then $\Psi_\varepsilon $ is a subsolution to equation \eqref{eq: dual}, i.e. it satisfies \eqref{subsolution}. 
\end{proposition}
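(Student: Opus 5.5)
The plan is to verify \eqref{subsolution} for $\Psi_\varepsilon$ by a direct computation, treating the reaction part and the source/sink part separately.

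\textbf{Reaction part and time derivative.} By the definition \eqref{M(n)}, $|M(n\pm S_\rho)|=|M(n)|$ for every $\rho$, because each $m_j\in\mathcal M$. Hence, exactly as in Step 1 of the proof of Proposition \ref{prop:existence of solution to the dual problem}, $\mathcal L_r[\Psi_\varepsilon]=0$. It then remains to show
\[
\partial_t\Psi_\varepsilon-\mathcal L_s[\Psi_\varepsilon]\le 0 .
\]
Set $\xi=\varepsilon|M(n)|$ and $u=a\xi e^{-\delta t}$, and write $G=g(u)$ with $g(u)=e^{-u}/(1+e^{-u})$. This function satisfies $g'=-g(1-g)$, $|g''|\le g(1-g)$, and is decreasing. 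Since $\lambda'=-\varepsilon c e^{-\delta t}\lambda$ and $\partial_t u=-\delta u$, we get
\[
\partial_t\Psi_\varepsilon=\lambda\bigl[-\varepsilon c e^{-\delta t}G+\delta u\,G(1-G)\bigr].
\]

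\textbf{Source and sink terms.} Both $M(e_k)$ and the increment of $u$ under $n\mapsto n\pm e_k$, namely $\pm a\varepsilon|M(e_k)|e^{-\delta t}$, are explicit. I would expand $g(u\pm h_k)-g(u)$ with $h_k=a\varepsilon|M(e_k)|e^{-\delta t}$ to second order, using the mean value theorem and the bound $|g''|\le g(1-g)$ to control the remainder. The source term contributes
\[
-\alpha_k h_k G(1-G)\lambda+O\bigl(h_k^2G(1-G)\lambda\bigr),
\]
which is negative up to the quadratic error. The sink term contributes $+\beta_k n_k h_k G(1-G)\lambda$, up to an error of the same quadratic form. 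Because $g$ is decreasing, the sink term has the favourable sign: it increases $\mathcal L_s[\Psi_\varepsilon]$.

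\textbf{Collecting terms.} After factoring out $\lambda G$, I would compare the pieces as follows.
\begin{enumerate}
\item The constant $-\varepsilon c e^{-\delta t}$ coming from $\lambda$ absorbs the first-order source losses. Since $1-G\le 1$, these are at most $\sum_k\alpha_k a\varepsilon|M(e_k)|e^{-\delta t}$, and this is the reason for the lower bound imposed on $c$.
\item The quadratic Taylor errors are of size $|\Omega_c|\max_k|M(e_k)|^2(a\varepsilon)^2e^{-2\delta t}$. These are the terms that the condition $\delta\ge|\Omega_c|(\max_k M(e_k))^2$ is designed to dominate.
\end{enumerate}

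\textbf{Main obstacle.} The delicate step is the positive term $\delta u\,G(1-G)$ in $\partial_t\Psi_\varepsilon$, which grows linearly in $|M(n)|$. It has to be compensated by the sink gain, which is linear in $n_k$ only for $k\in\Omega_c$. I would try first to use the factor $G(1-G)\le e^{-u}$ to make $u\,G(1-G)$ bounded, by $\sup_u ue^{-u}$. The leftover bounded term would then be absorbed into the $c$-term, if necessary enlarging $c$ or adjusting $a$. If that fails, for instance because of states in which $|M(n)|$ is carried by species not in $\Omega_c$, I would reorganize the exponent so that the decay rate $\delta$ is compared with $\min_k\beta_k$, in the spirit of the choice $\gamma\ge\max_k\beta_k$ in Proposition \ref{prop:existence of solution to the dual problem}.
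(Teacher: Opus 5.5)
\textbf{There is a genuine gap, exactly at your ``main obstacle'', and it cannot be closed: for $a>0$ the proposition is false as stated.}

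Take $\Omega=\{1,2\}$, the reaction $(1)\rightleftarrows(2)$, and a source/sink only for species $1$. Then $M(n)=n_1+n_2$ and $\mathcal L_r[\Psi_\varepsilon]=0$. At $n=(0,N)$ the sink term vanishes and the source term is $\le 0$. In your notation, using $1-G\ge\frac12$,
\[
\partial_t\Psi_\varepsilon-\mathcal L_r[\Psi_\varepsilon]-\mathcal L_s[\Psi_\varepsilon]\ \ge\ \lambda G\bigl(\delta u(1-G)-\varepsilon c e^{-\delta t}\bigr)\ \ge\ \lambda G\,\varepsilon e^{-\delta t}\Bigl(\tfrac12\delta a N-c\Bigr)>0
\]
for every $t\ge0$ once $N>2c/(\delta a)$.

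The same happens at $Ne_j$ for any $j\notin\Omega_c$ in any conservative system, because conservativity gives $|M(e_j)|>0$. Even when $\Omega_c=\Omega$, it happens at $Ne_k$ for large $t$ and $N$ whenever $\beta_k<\delta$, and the stated lower bound on $\delta$ does not prevent that.

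Neither of your remedies works:
\begin{itemize}
\item The first fails because the only negative term, $-\varepsilon c e^{-\delta t}\lambda G$, also carries the factor $G\approx e^{-u}$. After factoring out $\lambda G$, as you do yourself, the bad term is $\delta u(1-G)\ge\frac12\delta a\varepsilon|M(n)|e^{-\delta t}$. This is unbounded in $n$, so no enlargement of $c$ absorbs it.
\item The second requires $\delta\le\min_k\beta_k$, an upper bound opposite to the hypothesis. It also gives nothing at states carrying no $\Omega_c$-species, where the sink gain is zero.
\end{itemize}

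Your reading of the other hypotheses is also off:
\begin{itemize}
\item The Taylor remainders need no control. Since $g$ is convex on $[0,\infty)$ and $n-e_k\in\mathbb N^M$ whenever the sink acts, both remainders are $\ge0$ and help.
\item $\delta$ enters only through the harmful term, so no lower bound on it can dominate anything.
\item The stated bound on $c$, with a $\min$ and a factor $\frac12$, is weaker than the $a\sum_k\alpha_k|M(e_k)|$ that the source losses need, since $1-G\to1$.
\end{itemize}

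\textbf{The paper's proof does not get around this either; it passes over it through two errors.} First, it writes $\partial_t\Psi_\varepsilon=\dot\lambda G+\lambda\partial_\xi G$ instead of $\dot\lambda G-\lambda\delta\xi\,\partial_\xi G$. Second, it inserts the upper bound $n_k\le C\xi/\varepsilon$ into $\sum_k(\alpha_k-\beta_kn_k)\varepsilon|M(e_k)|\lambda\,\partial_\xi G$, although $\partial_\xi G\le 0$. So it uses an upper bound on $n_k$ where the lower bound $\sum_{k\in\Omega_c}\beta_kn_k|M(e_k)|\ge\delta|M(n)|$ is needed, and a $\min$ over $k$ where a sum is needed. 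Together these produce the favourable term $\lambda(1+\delta\xi)\partial_\xi G$ at the end. With the correct time derivative, the sink would have to cancel $-\lambda\delta\xi\,\partial_\xi G$, which is exactly what fails.

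\textbf{What your computation does prove.} Your term-by-term computation, together with the convexity remark, proves a corrected statement. If $\Omega_c=\Omega$, $\delta\le\min_k\beta_k$ and $c\ge a\sum_k\alpha_k|M(e_k)|$, then, using $u=\sum_k n_kh_k$,
\[
\partial_t\Psi_\varepsilon-\mathcal L_r[\Psi_\varepsilon]-\mathcal L_s[\Psi_\varepsilon]\le\lambda G\Bigl(\sum_k\alpha_kh_k-\varepsilon c e^{-\delta t}\Bigr)+\lambda G(1-G)\sum_k(\delta-\beta_k)n_kh_k\le 0 .
\]
Covering the paper's channel models, where $\Omega_c\ne\Omega$, requires a different ansatz, not different constants.
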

\begin{proof}
In order to prove the result we first of all notice that the definition of $M $ guarantees that 
\[ 
\mathcal L_r [ \Psi_\varepsilon] (t, n ) =0. 
\]
Moreover we have that for every $t >0 $ and $n \in \mathbb N^M $ it holds that
\begin{align*} 
\mathcal L_s [ \Psi_\varepsilon] (t, n ) = \sum_{ k \in \Omega_c }  \alpha_k \lambda(t) \left[ G(\varepsilon |M ( n + e_k ) |-  G(\varepsilon |M ( n  ) | ) \right] + \sum_{ k \in \Omega_c }  \beta_k n_k  \lambda(t) \left[ G(\varepsilon |M ( n - e_k ) |) -  G(\varepsilon |M ( n  ) |  \right]. 
\end{align*} 
Using Taylor's theorem  and the change of variables $ \xi = \varepsilon | M(n) |$ we obtain that
\[ 
 G(\varepsilon| M ( n + e_k )| ) -  G(\varepsilon |M ( n  ) |) ) =  G( \xi + \varepsilon |  M(e_k )| ) -  G(\xi  )  = \varepsilon |M(e_k)| \partial_\xi G(t, \xi ) + R_\varepsilon^k  (t, \xi) 
\]
and 
\[ 
 G(\varepsilon |M ( n - e_k ) |) -  G(\varepsilon |M ( n  ) | ) ) =  G( \xi -  \varepsilon |  M(e_k ) | ) -  G(\xi  )  =-  \varepsilon |M(e_k)| \partial_\xi G(t, \xi ) + R_\varepsilon^k  (t, \xi) 
\]
where 
\[ 
R_\varepsilon^k   (t, \xi) :=\frac{1}{2} \int_{ \varepsilon M(e_k) }^{ \varepsilon M(e_k) +\xi  } \partial_{\xi\xi} G(t, \xi ) \left( \xi + \varepsilon | M (e_k )| - s  \right)^2 ds  
\]
Therefore 
\begin{align*} 
\mathcal L_s [ \Psi_\varepsilon] (t, n ) = \sum_{ k \in \Omega_c }  ( \alpha_k - \beta_k n_k ) \varepsilon | M(e_k)| \lambda(t)  \partial_\xi G(t, \xi ) + \sum_{ k \in \Omega_c }  ( \alpha_k + \beta_k n_k )  R_\varepsilon^k  (t, \xi). 
\end{align*} 
We now notice that the function $G$ is decreasing and convex. Indeed we have that 
\[ 
\partial_\xi G(t, \xi ) = - \dfrac{  a  e^{ - \delta t }  \exp  \left(  a \xi e^{  - \delta t } \right)  }{ (1+\exp \left(  a \xi e^{ -  \delta t } \right))^2  } = -  \dfrac{ a  e^{ - \delta t }     }{ \left( \cosh \left( \dfrac{ a \xi e^{ - \delta t } }{2} \right)\right)^2   } \leq 0, \quad t >0,\  \xi >0 
\]
and 
\[ 
\partial_{\xi\xi} G(t, \xi ) = \dfrac{ a^2  e^{ -2 \delta t }  \sinh \left( \dfrac{ a \xi e^{ - \delta t } }{2} \right)  }{ 2 \left( \cosh \left( \dfrac{ a \xi e^{ - \delta t } }{2} \right)\right)^3 }  \geq 0 \quad t >0, \ \xi >0 . 
\]
As a consequence we have that for every $ k \in \Omega_c $ it holds that 
\[ 
R_\varepsilon^k   (t, \xi)  \geq 0, \quad \forall t \geq 0 , \  \xi \in \mathbb R_+ 
\]
In particular this implies that 
\begin{align*} 
\mathcal L_s [ \Psi_\varepsilon] (t, n ) \geq  \sum_{ k \in \Omega_c }  ( \alpha_k - \beta_k n_k ) \varepsilon | M(e_k)| \lambda(t)  \partial_\xi G(t, \xi ). 
\end{align*}  
Since $\xi = \varepsilon M (n) = \varepsilon \sum_{j =1}^L m_j^T n  $ we know that there exists a constant $ C>0 $ such that $n_k \leq   \dfrac{\xi C  }{ \varepsilon } $ for every $ k \in \Omega_c $. Then we have that 
\begin{align*} 
\mathcal L_s [ \Psi_\varepsilon] (t, n ) \geq  \sum_{ k \in \Omega_c }  ( \alpha_k \varepsilon - C \xi  ) | M(e_k)| \lambda(t)  \partial_\xi G(t, \xi ) \geq \lambda(t)   \left( A \varepsilon  - \delta  \xi \right) \partial_\xi G(t, \xi ) 
\end{align*} 
where $A : = |\Omega_c | \min_{ k \in \Omega_c } ( \alpha_k |M(e_k )|)$ and where $\delta := C \max_{ k \in \Omega_c } | M (e_k) |$. 

This allows us to deduce that 
\begin{align*} 
\partial_t  \Psi_\varepsilon (t, n )  - \mathcal L_s [ \Psi_\varepsilon] (t, n ) &\leq \partial_t  \Psi_\varepsilon (t, n ) -  \lambda(t)   \left( A \varepsilon  - \delta  \xi \right) \partial_\xi G(t, \xi )  \\
& = \dot{\lambda (t) } G(t, \xi ) +\lambda (t) \partial_\xi G(t, \xi ) -  \lambda(t)   \left( A \varepsilon  - \delta  \xi \right) \partial_\xi G(t, \xi ). 
\end{align*}
By the definition of $ \lambda $, of $G$, of $A$ and of $c$ we know that 
\begin{align*} 
 \dot{\lambda (t) } G(t, \xi )  - A \varepsilon \lambda (t) \partial_\xi G(t, \xi ) = G   (t, \xi )   \lambda (t)  \varepsilon e^{ - \delta t }  \left( \dfrac{ A a   } {1+ \exp (- a \xi e^{- \delta t }) } - c \right) \leq 0. 
\end{align*}
Therefore 
\begin{align*} 
\partial_t  \Psi_\varepsilon (t, n )  - \mathcal L_s [ \Psi_\varepsilon] (t, n ) &\leq \lambda (t) ( 1+ \delta \xi )  \partial_\xi G(t, \xi  ) \leq 0. 
\end{align*}
We deduce that $ \Psi_\varepsilon $ is a subsolution to \eqref{eq: dual}. 
\end{proof}
The subsolution constructed in Proposition \ref{prop:subsolution dual} can be used in order to find a lower bounds to solutions to \eqref{eq: dual}  that have as initial conditions Heaviside functions. 
\begin{lemma} \label{cor:subsolution in R}
Let $R>0 $. 
Assume that $ H $ is the solution to equation \eqref{eq: dual} with initial condition 
\[ 
H_0(n) = \mathds{1}_{|M(n) | \leq R  } (n).  
\] 
Then for every $n \in \mathbb N^M $ it holds that 
\begin{equation} \label{eq:lower bound subsolution}
\liminf_{ t \to \infty }   H(t, n )  \geq    1-  \dfrac{c \varepsilon_R}{\delta } \left( 1+ c_a \right)
\end{equation}
where  $ a, c, \delta  $ are as in Proposition \ref{prop:subsolution dual} and 
\[ 
\varepsilon_R := \dfrac{1}{a R } \ln \left( 1+ \dfrac{2}{c_a } \right)
\]
for some $ c_a >0 $. 
\end{lemma}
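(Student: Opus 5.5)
The plan is to compare $H$ from below with the subsolution $\Psi_\varepsilon$ of Proposition \ref{prop:subsolution dual}, using the comparison principle (Lemma \ref{lem:comparison principle}). The constants $c_0$ and $\varepsilon$ will be chosen so that $\Psi_\varepsilon(0,\cdot)\le H_0$. The lower bound \eqref{eq:lower bound subsolution} should then follow by letting $t\to\infty$ in the explicit formula for $\Psi_\varepsilon$.

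\emph{Step 1: initial ordering.} At $t=0$ we have $\lambda(0)=c_0$ and $\Psi_\varepsilon(0,n)=c_0\,G(0,\varepsilon|M(n)|)$, with $G(0,\xi)=e^{-a\xi}/(1+e^{-a\xi})$. I would take $c_0=1+c_a$ and $c_a\le 1$, so that $c_0/2\le 1$; the normalization is the one for which $\varepsilon_R$ comes out as in the statement.
\begin{itemize}
\item On $\{|M(n)|\le R\}$, where $H_0=1$: we have $\Psi_\varepsilon(0,n)\le c_0\,G(0,0)=c_0/2\le 1$.
\item On $\{|M(n)|>R\}$, where $H_0=0$: this is the delicate point, since $\Psi_\varepsilon>0$ everywhere and cannot lie below $0$.
\end{itemize}
I would handle the second region by working with $\Psi_\varepsilon-\kappa$ for a constant $\kappa\ge 0$. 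A constant is annihilated by $\mathcal L_r+\mathcal L_s$, so $\Psi_\varepsilon-\kappa$ is still a subsolution. Choosing $\kappa:=\sup_{|M(n)|>R}\Psi_\varepsilon(0,n)\le c_0 e^{-a\varepsilon R}$ gives $\Psi_\varepsilon(0,\cdot)-\kappa\le H_0$ everywhere. With $\varepsilon=\varepsilon_R$, the definition of $\varepsilon_R$ gives $e^{-a\varepsilon_R R}=c_a/(2+c_a)$, so $\kappa\le c_0 c_a/(2+c_a)$. Alternatively, one may use the comparison principle with the initial inequality only near infinity, using the decay of $\Psi_\varepsilon$. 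I expect this step, namely tuning $c_0,c_a,\varepsilon$ so that the subsolution sits below the Heaviside datum while its limit stays close to $1$, to be the main obstacle. The constants in the statement suggest exactly this kind of balancing.

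\emph{Step 2: comparison.} By Lemma \ref{lem:comparison principle}, applied to $\Psi_\varepsilon-\kappa$ against $H$, we get $H(t,n)\ge \Psi_\varepsilon(t,n)-\kappa$ for all $t\ge 0$ and $n\in\mathbb N^M$. To justify the comparison I would use that $H$ satisfies the bound \eqref{eq:bound for the dual at infinity} (Proposition \ref{prop:existence of solution to the dual problem}). The subsolution $\Psi_\varepsilon$ is bounded, so the difference attains its extremum and the argument of Lemma \ref{lem:maximum principle} applies.

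\emph{Step 3: the limit $t\to\infty$.} For fixed $n$, $e^{-\delta t}\to 0$, so $G(t,\varepsilon|M(n)|)\to 1/2$ and $\lambda(t)\to c_0 e^{-\varepsilon c/\delta}\ge c_0(1-\varepsilon c/\delta)$. Hence
\[
\liminf_{t\to\infty}H(t,n)\ \ge\ \tfrac{c_0}{2}\Bigl(1-\tfrac{c\varepsilon_R}{\delta}\Bigr)-\kappa .
\]
The remaining work is bookkeeping. With the normalization of $G$ and $c_0$ adjusted (for instance $c_0=2$, or replacing $G$ by $2G$, which is still a subsolution by linearity), the limit becomes $1-\frac{c\varepsilon_R}{\delta}c_0/2$, minus a $\kappa$-term of the same order. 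The choice of $\varepsilon_R$ through $\ln(1+2/c_a)$ should make these combine into $\frac{c\varepsilon_R}{\delta}(1+c_a)$, giving \eqref{eq:lower bound subsolution}. I would fix the precise constant choices at this stage by matching this expression.
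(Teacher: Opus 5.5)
Your strategy matches the paper's: shift the subsolution of Proposition \ref{prop:subsolution dual} down by a constant, compare with $H$, and let $t\to\infty$. The paper uses $(\Psi_{\varepsilon_R}-c_a)_+$. Your unclipped $\Psi_\varepsilon-\kappa$ is equally good, and it avoids the question of whether the positive part of a subsolution is a subsolution. One small correction to Step 2: a negative infimum of $H-(\Psi_\varepsilon-\kappa)$ is attained because this difference tends to $\kappa>0$ as $|M(n)|\to\infty$, not merely because $\Psi_\varepsilon$ is bounded. The real problem is that the constant balancing you flag as the main obstacle is where the proposal fails, and Step 3 bookkeeping cannot repair it under the constraints you set in Step 1.

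In Step 1 you require $c_0/2\le 1$ on $\{|M(n)|\le R\}$, forgetting that $\kappa$ is subtracted there as well. With that constraint the limit $\frac{c_0}{2}e^{-c\varepsilon_R/\delta}-\kappa$ is at most $1-\kappa$. But $\kappa\ge\sup_{|M(n)|>R}\Psi_{\varepsilon_R}(0,n)$ does \emph{not} shrink with $\varepsilon_R$. With $y:=e^{-a\varepsilon_R R}=c_a/(2+c_a)$, it stays comparable to $c_0\,y/(1+y)=c_0c_a/(2(1+c_a))$ for every $R$. So the claim that the $\kappa$-term is ``of the same order'' as $c\varepsilon_R/\delta$ is false. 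For $c_0=2$ you get roughly $\frac{1}{1+c_a}-\frac{c\varepsilon_R}{\delta}$. This lies strictly below $1-\frac{c\varepsilon_R}{\delta}(1+c_a)$ whenever the latter is positive, and it does not tend to $1$ as $R\to\infty$, which is the regime in which the lemma is used later.

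The missing idea is that subtracting $\kappa$ also makes room on $\{|M(n)|\le R\}$, where the actual condition is $c_0/2-\kappa\le 1$. You must raise the amplitude by exactly what you subtract, so that $\kappa$ cancels in the limit instead of being lost. Your passing suggestion to replace $G$ by $2G$ gives the right amplitude, but only together with this. Take $c_0=2(1+c_a)$ and $\kappa=c_a$. Fix $\kappa$ at $c_a$ rather than defining it as a supremum, since the supremum could fall below $c_a$ and break the inner inequality. Then $\Psi_{\varepsilon_R}(0,n)-c_a\le(1+c_a)-c_a=1$ for all $n$, and for $|M(n)|>R$
\[
\Psi_{\varepsilon_R}(0,n)\le 2(1+c_a)\frac{y}{1+y}=c_a .
\]
This is exactly the calibration behind $\varepsilon_R=\frac{1}{aR}\ln\bigl(1+\frac{2}{c_a}\bigr)$. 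It needs the exact factor $y/(1+y)$; your cruder bound $c_0y$ exceeds $c_a$ and would weaken the result. Since $G(t,\xi)\to\frac12$ and $\lambda(t)\to c_0e^{-c\varepsilon_R/\delta}$, comparison gives
\[
\liminf_{t\to\infty}H(t,n)\ge (1+c_a)e^{-c\varepsilon_R/\delta}-c_a\ge 1-\frac{c\varepsilon_R}{\delta}(1+c_a).
\]
The only loss is the $O(\varepsilon_R)$ decay of $\lambda$, amplified by the factor $1+c_a$, and your restriction $c_a\le 1$ becomes unnecessary.
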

\begin{proof}
Let us define $c_0 := 2 (1+ c_a ). $
Consider the function $ \Psi^+_{\varepsilon_R } (t, n ) $ defined as 
\[ 
\Psi^+_{\varepsilon_R } (t, n ):= \left(\Psi_{\varepsilon_R } (t, n ) - c_a \right)_+ 
\]
where $ \Psi_{\varepsilon_R }$ is given by \eqref{psi epsilon}. 

 By construction we have that  $ H_0(n) = \mathds{1}_{|M(n) | \leq R  } (n) \geq \Psi^+_{\varepsilon_R } (0, n )$ for every $n \in \mathbb N^M$.
 Since the function $\Psi^+_{\varepsilon_R } (t, n )$ is a subsolution we deduce that for every $ n \in \mathbb N^M$ 
 \[ 
 \liminf_{ t \to \infty }   H(t, n )  \geq \liminf_{ t \to \infty } \Psi^+_{\varepsilon_R } (t, n ).  
 \] 
 Using the definition of $\Psi^+_{\varepsilon_R } (t, n )$ 
 we obtain that 
 \[
  \liminf_{ t \to \infty } \Psi^+_{\varepsilon_R } (t, n ) \geq ( 1+c_a) \exp \left( - \dfrac{\varepsilon_R c   }{\delta }\right) - c_a \geq   1-  \dfrac{c \varepsilon_R}{\delta } \left( 1+ c_a \right). 
 \]
 Hence inequality holds \eqref{eq:lower bound subsolution}. 
\end{proof}
Proposition \ref{prop:subsolution dual} and Lemma \ref{cor:subsolution in R} allow us to prove that the solution to the dual problem \eqref{eq: dual} converges to a constant $ \psi_\infty $ as time tends to infinity. 
\begin{proposition}
Assume that $ \psi_0 \in c_{ 00 } ( \mathbb N^M )  $ is such that $\|  \psi \|_\infty \leq 1  $ and such that $ \psi_0 (n) \geq 0 $ for every $n \in \mathbb N^M $. Then the unique solution $ \psi $ to the dual problem \eqref{eq: dual} is such that there exists a positive  $ \psi_\infty \in \mathbb R $ such that 
\[ 
\lim_{ t \to \infty} \psi(t, n ) = \psi_\infty , \quad \forall n \in \mathbb N^M. 
\]    
\end{proposition}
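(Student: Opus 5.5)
The argument is a Liouville-type oscillation argument for the dual equation \eqref{eq: dual}. The limiting constant $\psi_\infty$ is obtained from monotone envelopes, and the gap between them is closed using the lower bound of Lemma \ref{cor:subsolution in R} together with the double exponential bound \eqref{eq:bound for the dual at infinity}.

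\emph{Monotone envelopes.} I would set
\[
\overline m(t):=\sup_{n\in\mathbb N^M}\psi(t,n), \qquad \underline m(t):=\inf_{n\in\mathbb N^M}\psi(t,n).
\]
The equation is autonomous, so $(s,n)\mapsto\psi(t_0+s,n)$ is again a solution of \eqref{eq: dual}, with initial datum $\psi(t_0,\cdot)$. Lemma \ref{lem:maximum principle} then shows that $\overline m$ is nonincreasing and $\underline m$ is nondecreasing, and $0\le\underline m\le\overline m\le 1$. One technical point: the shifted datum is not in $c_{00}$. For the supremum this is harmless, because \eqref{eq:bound for the dual at infinity} still makes the sup attained. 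For the infimum I would justify the comparison through the truncated problems and Lemma \ref{lem:comparison principle truncated}, using the corollary on uniqueness. Both envelopes therefore have limits $\overline m_\infty\ge\underline m_\infty$, and it suffices to prove $\overline m_\infty=\underline m_\infty$. Positivity of $\psi_\infty$ will follow, assuming $\psi_0\not\equiv0$, from the lower bound below applied to a multiple of an indicator lying under $\psi_0$.

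\emph{Key step: a quantitative contraction of the oscillation.} Fix a large time $t_0$, write $u=\psi(t_0,\cdot)$, and let $\overline m=\overline m(t_0)$. Consider $\overline m-\psi(t_0+s,\cdot)$. This is a nonnegative solution whose initial datum is $\overline m-u\ge0$. The goal is to show that this datum is at least a fixed fraction of $\overline m-\underline m$ on a set $\{|M(n)|\le R\}$, or else to do the symmetric thing with $\psi-\underline m$.

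The double exponential bound \eqref{eq:bound for the dual at infinity} forces $\psi(t,n)$ to be small when $|M(n)|\gg e^{\gamma t}$. So at each time the solution is close to $0$ far away. Hence, unless $\overline m\approx 0$ (in which case we are done), the datum is ``low'' somewhere on a large set.

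Then I would apply Lemma \ref{cor:subsolution in R}, together with linearity and the comparison principle Lemma \ref{lem:comparison principle}, to $\theta\mathds 1_{|M(n)|\le R}$. This gives
\[
\overline m(t_0+s)\le \overline m-\theta\bigl(1-\tfrac{c\varepsilon_R}{\delta}(1+c_a)\bigr)
\]
for $s$ large, with $\theta$ proportional to the oscillation. The result is an inequality $\mathrm{osc}(t_0+s)\le\kappa\,\mathrm{osc}(t_0)$ with $\kappa<1$ uniform in $t_0$. Iterating it forces the oscillation to $0$, and the pointwise limit $\psi_\infty$ follows.

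\emph{Main obstacle.} The hard part is the step where smallness at infinity must be converted into a lower bound on a bounded ball $\{|M(n)|\le R\}$. The region where $\psi$ is small moves outward like $e^{\gamma t}$, while the subsolution estimate needs the indicator set to be where the deficit lives. I would first try to reverse the roles: use the sink-driven drift toward small $|M(n)|$, which is encoded in $\Psi_\varepsilon$ and in $\varepsilon_R\to0$ as $R\to\infty$, to show that mass on the ball is refreshed from any configuration within a time depending only on $R$. If a uniform $\kappa$ cannot be obtained directly, a fallback is to extract a subsequence $\psi(t_k+\cdot)\to\tilde\psi$, an entire bounded solution. One would then show via the strong maximum principle (the ``constant if the extremum is attained'' part of Lemma \ref{lem:maximum principle}), together with irreducibility coming from the sources and sinks, that $\tilde\psi$ is constant.
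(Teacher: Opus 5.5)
\textbf{There is a genuine gap, and it is in your reduction to the envelopes, not only in the step you flagged.} The global envelopes are the wrong objects.

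By \eqref{eq:bound for the dual at infinity}, for each fixed $t$ we have $\psi(t,n)\le Ce^{-b|M(n)|e^{-\gamma t}}\to0$ as $|n|\to\infty$, and $\psi\ge0$. Hence $\underline m(t)=\inf_n\psi(t,n)=0$ for \emph{every} $t$. Proving $\overline m_\infty=\underline m_\infty$ would then give $\psi\to0$ uniformly, contradicting the positivity of $\psi_\infty$ that you also want.

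The upper envelope does not tend to the pointwise limit either, because initial states far out can be ``in transit'' through the support of $\psi_0$ at time $t$. This is visible already for one species with a source and a sink. There $X_t=\mathrm{Bin}(n,e^{-\beta t})+\mathrm{Poi}\bigl(\tfrac{\alpha}{\beta}(1-e^{-\beta t})\bigr)$. Take $\psi_0=\mathds{1}_{\{\bar n\}}$ and $n\approx\bar n e^{\beta t}$; then $\psi(t,n)\gtrsim e^{-\alpha/\beta}(2\pi\bar n)^{-1/2}$ for all large $t$. The limit is $\psi_\infty=e^{-\alpha/\beta}(\alpha/\beta)^{\bar n}/\bar n!$, which is far smaller when $\bar n\gg\alpha/\beta$.

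So the convergence is only pointwise (locally uniform in $n$), the oscillation over $\mathbb N^M$ never contracts, and no uniform $\kappa<1$ exists. Also, Lemma \ref{cor:subsolution in R} is itself a pointwise statement ($\liminf_t$ at fixed $n$), so it can never bound $\sup_n\psi(t_0+s,n)$. The ``main obstacle'' you describe is exactly this, and it cannot be overcome within the envelope framework.

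\textbf{What the paper does instead.} It takes the time asymptotics pointwise before optimizing in $n$: $\underline\psi_\infty=\inf_n\liminf_t\psi(t,n)$ and $\overline\psi_\infty=\sup_n\limsup_t\psi(t,n)$, and it argues by contradiction.
\begin{itemize}
\item On the finite set $B_R=\{|M(n)|\le R\}$ the liminf is uniform. So at a suitable large time $\bar t$ one has $\psi(\bar t,\cdot)\ge(\underline\psi_\infty-\varepsilon)\mathds{1}_{B_R}$, together with a bump of height about $\overline\psi_\infty$ at some $n_0\in B_R$.
\item The far region, where $\psi$ may indeed be small, is not used as the source of a gain. It is an error term whose effect on $\liminf_t\psi(t,n)$ is $O(\varepsilon_R)$ by Lemma \ref{cor:subsolution in R}.
\item The bump is spread to a positive lower bound on a ball $B_{\bar R}$. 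Lemma \ref{cor:subsolution in R} then gives $\liminf_t\psi(t,n)\ge\underline\psi_\infty+\eta$ uniformly in $n$, a contradiction.
\end{itemize}

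\textbf{On your fallback.} Taking subsequential limits $\psi(t_k+\cdot)\to\tilde\psi$ and using a Liouville argument is the right kind of local reasoning. As sketched, it lacks two things. First, $\sup_n\tilde\psi$ need not be attained, so the strong maximum principle does not apply directly. Second, you must show that all subsequential limits are the same constant, e.g.\ via $\sum_n\overline f(n)\psi(t,n)=\sum_n\overline f(n)\psi_0(n)$ for a steady state $\overline f$ from Theorem \ref{thm:steady state}.

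\textbf{On irreducibility.} ``Irreducibility coming from the sources and sinks'' is not implied by the hypotheses. In the channel model of Section \ref{sec:example 1}, the number of channels $|n_X|$ is conserved by every reaction and untouched by the sources and sinks. For $\psi_0=\mathds{1}_{\{\bar n\}}$ one therefore has $\psi\equiv0$ on every class with a different number of channels, and the limit is not a single constant. The statement needs an accessibility hypothesis, which the paper uses implicitly when it asserts $H_1>0$ on $B_{\bar R}$, as well as $\psi_0\not\equiv0$.
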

\begin{proof}
In order to prove the statement it is convenient to use the notation $ \underline \psi ( n ) = \liminf_{ t \to \infty } \psi(t, n )  $ and $ \overline \psi ( n ) = \limsup _{ t \to \infty } \psi(t, n )  $  for all $n \in \mathbb N^M $. 
Moreover, we define $\underline \psi_\infty $ and $\overline  \psi_\infty $ as
\[
\underline \psi_\infty := \inf_{ n \in \mathbb N^M }  \underline \psi(n) \quad  \text{ and } \quad  \overline \psi_\infty := \sup_{ n \in \mathbb N^M }  \overline \psi(n). 
\]
We aim at proving that $\overline \psi_\infty = \underline \psi_\infty = \psi_\infty $. To this end we argue by contradiction, hence we assume that 
\[ 
\overline \psi_\infty > \underline  \psi_\infty. 
\]
By the definition of supremum we know that for every $ \bar \varepsilon >0 $ there exists a $ n_0 \in \mathbb N^M $ that is such that 
\[ 
\overline \psi (n_0 ) \geq \overline \psi_\infty - \bar \varepsilon. 
\]
In particular we select $ \bar \varepsilon := \dfrac{\overline \psi_\infty  -  \underline \psi_\infty}{3} $. As a consequence $n_0 $ depends only on $\overline \psi_\infty $ and $  \underline \psi_\infty$. 

Consider $R >0 $ sufficiently large in order to have that $ |M(n_0) | < R $. Then by the definitions of $\underline \psi_\infty$ and $\overline \psi_\infty $ we know that 
 there exists a time $t_{ \overline  \varepsilon R } >0 $ that is such that 
 \[ 
\psi (t_{ \bar \varepsilon R }  , n_0  ) \geq \overline \psi (n_0) - \varepsilon \geq \overline  \psi_\infty - 2 \bar \varepsilon >0.
 \]
On the other hand we by the definition of infimum  it holds that 
\[ 
\liminf_{ t \to \infty } \psi (t , n  ) \geq \underline \psi_\infty. 
\]
Moreover, by the definition of liminf we have that  for every $ \varepsilon >0 $ there exists a sufficiently large $T>0 $ such that for  $t >  T>0 $
\[ 
\psi(t, n ) \geq \liminf_{ t \to \infty } \psi (t , n  ) \geq \underline \psi_\infty - \varepsilon \quad  \forall n \ \text{ s.t. } \ |M(n) | \leq R. 
\]

 In order to prove the last inequality we used the fact that the set $ B_R := \{  n \in \mathbb N^M : | M(n) |\leq R \} $ is a finite set. 
 Consider now the solution $H $ to the dual equation $ \partial_t H = \mathcal L_s [H] + \mathcal L_r [H] $ that is such that 
 \[ 
 H( \bar t, n ) = \begin{cases}
      \overline \psi_\infty - 2 \bar \varepsilon  & \text{ if } n = n_0 \\
     (  \underline \psi_\infty- \varepsilon) \mathds{1}_{B_R} (n)  & \text{ if } n \neq n_0   
 \end{cases}
 \]
 for $ \bar t \geq \max \{  T, t_{ \bar \varepsilon R } \}$.
 Notice that by definition it holds that $ \psi (\bar t , n )  \geq H ( \bar t , n )   $ for every $n \in \mathbb N^M $. 
 We can apply the comparison principle and deduce that $ \psi (t, n )  \geq H (t , n )   $ for every $n \in \mathbb N^M$ and for every $t \geq \bar t $. We now aim at finding a bound from below for the function $H$. 
 Let us define $H_1 $ to be the solution to \eqref{eq: dual} with initial condition 
 \[
H_1( \bar t, n ) = \begin{cases}
     \overline \psi_\infty - 2 \bar \varepsilon +   \varepsilon - \underline \psi_\infty & \text{ if } n = n_0 \\
      0 & \text{ if } n \neq  n_0
\end{cases}
 \]
 with $H_2 $ the solution to  \eqref{eq: dual} with initial condition $H_2 ( \bar t, n ) =  \underline \psi_\infty- \varepsilon  $ for every $n \in \mathbb N^M$. 
 Finally let us denote with $H_3 $ the solution to \eqref{eq: dual} with initial datum $ H_3( \bar t, n )  =  - (  \underline \psi_\infty- \varepsilon) \mathds{1}_{ n \in \mathbb N^M \setminus {B_R}} $. 
 Notice that by construction $H = H_1 + H_2 + H_3$. 
Moreover $H_2(t, n ) =\underline \psi_\infty- \varepsilon $ for every $ n \in \mathbb N^M $ and every $ t \geq \bar t $. 

 Assume now that $ 0 < \overline R < R  $ is such that $n _0 \in B_{ \overline R } $. 
Due to the strong maximum principle (Lemma \ref{lem:maximum principle}) we have that for every $ n \in B_{\overline R } $
\[ 
H_1( \bar t +1 , n )   \geq \min_{ n \in B_{\overline R} } H_1( \bar t +1 , n )   >0.  
\] 
As a consequence, there exists a constant  $\nu_{\overline R } >0 $ that depends on $ \overline R $ and that is such that 
\[ 
H_1(\bar t +1 , n )   \geq \nu_{\bar R  } ( \overline \psi_\infty - 2 \bar \varepsilon +  \varepsilon - \underline \psi_\infty ), \quad  n \in B_{\overline R }  .  
\] 
Summarizing we proved that 
\[ 
H_1(\bar t +1 , n ) + H_2 (\bar t +1 , n )   \geq  \underline \psi_\infty- \varepsilon + \nu_{\bar R  }( \overline \psi_\infty - 2 \bar \varepsilon +  \varepsilon - \underline \psi_\infty ), \quad  n \in B_{\overline R }.
\]
We now consider the function $F$ that satisfies \eqref{eq: dual} and that is such that 
\[ 
F( \bar t  +1 , n ) = \underline \psi_\infty- \varepsilon + \nu_{\bar R  } ( \overline \psi_\infty - 2 \bar \varepsilon +  \varepsilon - \underline \psi_\infty )  \mathds{1}_{ B_{ \overline R }} (n). 
\]
By the comparison principle we know that $ H(t, n ) \geq F(t, n ) $ for every $ n \in \mathbb  N^M $ and every $ t \geq \bar t  +1 $. 
Moreover we have that 
\begin{align} \label{ineq for F}
F(t, n ) \geq   \underline \psi_\infty- \varepsilon + \nu_{\bar R  } ( \overline \psi_\infty - 2 \bar \varepsilon +  \varepsilon - \underline \psi_\infty ) G  (t - \bar t -1, n )  , \quad  \forall t \geq \bar t +1 , \quad n \in \mathbb N^M
\end{align} 
where the $G$ is the solution to the dual equation \eqref{eq: dual} with initial datum 
\[ 
\underline G (0, n ) \leq \mathds{1 }_{ B_{ \overline R }} (n) \quad  \forall n \in \mathbb N^M.  
\]
We can then use the bound \eqref{eq:lower bound subsolution} to deduce that there exists a constant $c_a>0 $ such that 
\begin{align} \label{ineq for F}
\liminf_{ t \to \infty } F(t, n ) \geq   \underline \psi_\infty- \varepsilon + \nu_{\bar R  } ( \overline \psi_\infty - 2 \bar \varepsilon +  \varepsilon - \underline \psi_\infty ) \left( 1-  \dfrac{c \varepsilon_{ \bar R}}{\delta } \left( 1+ c_a \right)\right)  , \quad  \forall t \geq \bar t +1 , \quad n \in \mathbb N^M
\end{align} 
where  $ a, c, \delta  $ are given as in Proposition \ref{prop:subsolution dual} and 
\[ 
\varepsilon_{ \bar R} := \dfrac{1}{a \bar R } \ln \left( 1+ \dfrac{2}{c_a } \right). 
\]

Now notice that by its definition we have that  if $t \geq \overline t +1 $ it holds that 
\[ 
H_3 (t, n ) \geq (\underline \psi_\infty - \varepsilon ) (  G(t, n )  -1 ) \geq - (\underline \psi_\infty - \varepsilon ) \dfrac{c \varepsilon_{ R}}{\delta } \left( 1+ c_a \right) , \quad n \in \mathbb N^M. 
\]

Therefore for every $  n \in \mathbb N^M$ we have that 
\begin{align*} 
\liminf_{ t \to \infty } \psi(t, n ) &  \geq  \underline \psi_\infty- \varepsilon + \nu_{\bar R  } ( \overline \psi_\infty -  2 \bar \varepsilon - \varepsilon - \underline \psi_\infty ) \left( 1 - \frac{\varepsilon_{\bar R } c }{ \delta }\right) - ( \underline  \psi_\infty - \varepsilon ) \dfrac{c \varepsilon_{ R}}{\delta } \left( 1+ c_a \right). 
\end{align*}
Recall that as $R  \to \infty $ we have that $ \varepsilon_R \to 0 $ and recall  that $ \varepsilon >0 $ can be taken arbitrarily small. 
Therefore it is possible to select $ \varepsilon $ and $R $ in such a way that 
\[
 \varepsilon < \nu_{\bar R  } ( \overline \psi_\infty -  2 \bar \varepsilon  - \underline \psi_\infty ) - ( \underline  \psi_\infty - \varepsilon ) \dfrac{c \varepsilon_{ R}}{\delta } \left( 1+ c_a \right) 
\] 
Hence we obtain that
\begin{align*} 
\liminf_{ t \to \infty } \psi(t, n )  >  \underline \psi_\infty. 
\end{align*} 
Due to the definition of $ \underline \psi_\infty $ this  is a contradiction. We deduce that $ \overline \psi_\infty =  \underline  \psi_\infty$. 

 \end{proof}

We are now ready to prove Theorem \ref{thm: convergence to steady state}. 
\begin{proof}[Proof of Theorem \ref{thm: convergence to steady state}]
    The existence of a steady state was proven in Theorem \ref{thm:steady state}. 
    The uniqueness of the steady states is a consequence of the duality formula 
    \begin{equation} \label{duality formula}
    \sum_{ n \in \mathbb N^M } f (T, n ) \psi(0, n ) =     \sum_{ n \in \mathbb N^M } f_0 ( n ) \psi(T, n ) \quad T \geq 0 
    \end{equation}
    where $\psi $ is the solution to \eqref{eq: dual}. 
    Indeed, consider two steady states $f_1 $ and $f_2 $. 
    The duality formula \eqref{duality formula} implies that 
      \[ 
    \sum_{ n \in \mathbb N^M } \left( f_1 ( n ) - f_2(n) \right) \psi(0, n ) =     \sum_{ n \in \mathbb N^M } \left( f_1 ( n ) - f_2(n) \right)  \psi(T, n ) \quad T \geq 0. 
    \]
    If we consider the initial datum $ \psi(0, n )= \delta_{ \overline n } (n) $ for some $ \overline n \in \mathbb N^M$ and take the limit as $T \to \infty $ we deduce that 
     \[ 
    f_1 ( \overline n ) - f_2 ( \overline n ) =  \psi_\infty    \sum_{ n \in \mathbb N^M } \left(  f_1 (  n ) - f_2 (  n ) \right) =0 . 
    \]
    Iterating this argument for every $ n \in \mathbb N^M $ we deduce that the steady state is unique. 

    In order to prove that the time dependent solution $f $ to \eqref{eq:master with sources weak} converges to the unique steady state $\overline f \in \mathcal P(\mathbb N^M ) $ we use again the duality formula. More precisely we consider $ \psi(0, \cdot) \in c_{ 00} ( \mathbb N^M ) $. Then 
       \[ 
    \sum_{ n \in \mathbb N^M } \left( f ( T, n ) - \overline f(n) \right) \psi(0, n ) =   \sum_{ n \in \mathbb N^M } \left( f (0,  n ) - \overline f(n)  \right)  \psi(T, n ), \quad \forall T \geq 0. 
    \]
    Taking the limit as $T \to \infty $ we deduce that 
      \[ 
  \lim_{ T \to \infty }  \sum_{ n \in \mathbb N^M } \left( f ( T, n ) - \overline f(n) \right) \psi(0, n ) = \psi_\infty   \sum_{ n \in \mathbb N^M } \left( f (0,  n ) - \overline f(n)  \right)  =0 . 
    \]
    This implies that $ f(T, \cdot ) \rightharpoonup \overline f $ in the $\ast$-weak  topology as $T \to \infty$. 
\end{proof} 
\bigskip

\section{Models of membrane channels} \label{sec:examples}
Many situations in biological systems can be described using the class of flux solutions considered in the paper. 
In this section we apply the results proven in the paper for general conservative systems endowed with sources and sinks to some models describing the transport of ions and molecules through channels that are located across membranes.
As anticipated in the introduction we consider two models.

In the first model, that we analyse in Subsection \ref{sec:example 1}, we consider the interaction between a channel and some molecules of a specific type. 
The channels opens and closes towards the extracellular space or towards the cytosol at random times. 
The molecules can enter and exit the channel when it is open and are therefore transported down their concentration gradient. 

In the second model, that we study in Subsection \ref{sec:example 2}, we consider the case of co-transport of glucose and Na$^+$ through a channel against a negative glucose gradient. In this case we model the active transport of glucose from the exterior to the interior of the cell membrane. This active transport uses the gradient of concentration of Na$^+$ as a reserve of energy. 
The main feature of this model is that the channel, that is located across the cell membrane,  can switch state only when it is empty or when both a ion Na$^+$ and a molecule of glucose are inside the channel.

\subsection{Model with one type of molecule} \label{sec:example 1}
Assume that a set of molecules, all of the same type, interact with a channel, namely they can pass through the channel. 
We assume that the channel is located across the membrane and separates the region outside the cell (extracellular space) from the region inside the cell (cytosol). 
We denote with $A$ the type of molecules that we consider.
The molecules $A$ can be of two types, they can be outside (type $O$) the membrane or inside the membrane (type $I$). 
We denote with $N$  the number of molecules outside the membrane. 
On the other hand, we denote with $n$  the number of molecules of type $A$ inside the membrane.

In order to go from the outside of the membrane to the inside the molecules of type $A$ have to pass through the channel. 
We assume that the channel can be at two different states. The channel is at state $0 $ if it is empty, i.e. no molecule is in the channel. It is instead at state $1$ if one molecule is in the channel. 
The set of the states at which we can find the molecules and the channel is therefore $ \Omega := \{  0,1 , I , O \} $. 
\begin{figure}[H] 
\centering
\includegraphics[width=0.8\linewidth]{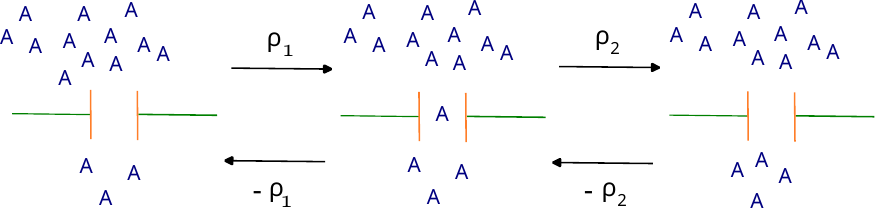}
\caption{ The system can jumps from the state $(n, N , n_X) = ( 3, 6, 1,0) $ to the state $(n, N , n_X) = ( 3, 5, 0,1) $ due to the reaction $ \rho_1$. It can then jump to state $(n, N , n_X) = ( 4, 5, 0,1) $ via the reaction $ \rho_2 $. 
}
\label{fig1}
\end{figure} 
The state of the system is characterized by the triple $ (n,N, n_X) $ where $ n , N \in \mathbb N $ and  $n_X \in \mathbb N^2$ is the number of channels at state $0$ and at state $1$. In particular, we assume that initially we have only one channel. 
The reactions that we consider are the following
\[ 
(A) + (0)  \overset{\rho_1}{\underset{-\rho_1}{\rightleftarrows}} (1), \quad  (1)  \overset{\rho_2}{\underset{-\rho_2}{\rightleftarrows}} (a) + (0). 
\]
Hence we have two reactions vectors 
\[ 
S_{\rho_1 } = \begin{bmatrix}
   -1 \\
     1 \\
     0 \\
     -1 
\end{bmatrix} \ \text{ and } \ 
S_{\rho_2 } = \begin{bmatrix}
     1 \\
     -1 \\
     1 \\
    0 
\end{bmatrix}. 
\]

The stoichiometric matrix associated with the chemical reactions is given by 
\[
\mathbb S_r  =\begin{bmatrix}
 -1 &   1 \\
    1 &  -1 \\
     0 & 1 \\
  -1 &  0 
\end{bmatrix}. 
\] 
Notice that $ \mathbb S_r  w =0$ implies $ w=0$. Hence there are no cycles associated with the system and the detailed balance property holds. 
Moreover the system is conservative, indeed we have that the vector $m =(1,2,1,1) $ is a positive conserved quantity. 

We now add a source and a sink of molecules of type $A$ and $a$, 
\[
\emptyset  \rightleftarrows (I) , \quad \emptyset  \rightleftarrows (O). 
\]
The stoichiometric matrix that includes the sources and sinks is the following
\[
\mathbb S  =\begin{bmatrix}
 -1 &   1 & 0 & 0  \\
    1 &  -1 & 0 & 0 \\
     0 & 1  & 1 & 0 \\
  -1 &  0 & 0 & 1  
\end{bmatrix} 
\] 
Notice that the vector $ w= (1,1,-1,1) $ is a cycle, indeed $ \mathbb S w =0$. 
Therefore Corollary \ref{cor:sources break detailed balance} implies that detailed balance holds if and only if the rates of the sources  $\alpha_k $ and of the sinks $ \beta_k$ satisfy the condition
\[
\log \left( \frac{\alpha_k }{\beta_k }\right) =E(k),\quad  k \in \{ a, A \} 
\]
where $E \in \mathbb R^{4} $ is a solution to
$ \mathbb S_r^T E = v $
where $v \in \mathbb R^4 $ is the vector defined as  $\left( v(k) \right)_{ k =1 }^4=\left( \log \left( \dfrac{K_{-\rho_k} }{ K_{\rho_k} } \right) \right)_{ k =1 }^4 $ where $K_{\rho_k },  K_{-\rho_k }$ are the rates of the reactions $\rho_k  $ and $ - \rho_k $.

We now write the weak form of the master equation \eqref{eq:master with sources weak} associated with the chemical system
\begin{equation} \label{eq:master example}
\dfrac{d }{ dt } \sum_{ n \in \mathbb N }\sum_{ N \in \mathbb N } \sum_{ n_X \in \mathbb N^2 } f(t, n , N , n_X ) \varphi (n , N, n_X ) = \sum_{ n \in \mathbb N }\sum_{ N \in \mathbb N } \sum_{n_X \in \mathbb N^2 } f(t, n , N , n_X) \mathcal L [ \varphi ] (n , N, n_X )
\end{equation}
where $ \varphi \in c_{00} ( \mathbb N^4 ) $ and where  $ \mathcal L = \mathcal L_c + \mathcal L_s $ and  we have that $\mathcal L_c $ is given by 
\begin{align*}
\mathcal L_c [\varphi ] (n,N, n_X  ) =& K_{ \rho_1 } N [  \varphi (n , N -1 , n_X - e_1 + e_2  ) -  \varphi (n , N  , n_X ) ]  \\
& + K_{ -\rho_1 } [ \varphi (n , N +1 , n_X + e_1 - e_2  ) -  \varphi (n , N  , n_X  ) ] \\ 
& + K_{ \rho_2 } n [ \varphi (n+1 , N  , n_X - e_1 + e_2  )  -  \varphi (n , N  , n_X ) ]  \\
& + K_{ - \rho_2 } [\varphi (n-1 , N  , n_X + e_1 - e_2  ) -  \varphi (n , N  , n_X )],
\end{align*}
and $ \mathcal L_s $ is given by 
\begin{align*}
\mathcal L_s [\varphi ] (n,N,  n_X ) =& \alpha_1 [ \varphi(n +1, N , n_X ) - \varphi(n , N ,  n_X  ) ] + \alpha_2  [ \varphi(n , N+1 , n_X ) - \varphi(n , N ,  n_X  ) ] \\
& + \beta_1 n [ \varphi(n -1, N ,  n_X  ) - \varphi(n , N , n_X  ) ] + \beta_2 N [ \varphi(n , N -1 , n_X   ) - \varphi(n , N , n_X   ) ]. 
\end{align*}
We can apply the results proven in this paper to deduce the following theorem.  
As already anticipated above we assume that at time $t=0$ the number of channels is equal to $1$. 
\begin{theorem} \label{thm: example1}
 Assume that $f_0 =\{ f_0(n , N, n_X  ) \}_{ (n , N , n_X   ) \in   \mathbb N^4    } $ is such that $f_0(n , N, n_X  ) \neq 0 $ if and only if $n_X = e_k $ for $ k \in \{ 1, 2 \} $. Moreover assume that 
\[ 
\sum_{ n \in \mathbb N  } \sum_{ N  \in \mathbb N  } \sum_{ n_X \in \mathbb N^2 }   f_0(n, N, e_k ) =1   \ \text{ and } \ \sum_{ n \in \mathbb N  } \sum_{ N  \in \mathbb N  } \sum_{ n_X \in \mathbb N^2 }   (N+n+1)  f_0(n, N, n_X  ) < \infty . 
\]
Then there exists a unique sequence of functions  $ \{ f(t,n,N, n_X ) \}_{ (n , N , n_X  ) \in   \mathbb N^4 \times \{ 0, 1 \}   }$  such that for every $(n,N, n_X)$ the function $t \mapsto f(t,n,N, n-X  )$ is continuously differentiable and satisfy \eqref{eq:master example} for every $ \varphi \in c_{ 00} (\mathbb N^4 ) $. 
Moreover we have that 
\begin{equation} \label{only one channel}
f(t, n , N, n_X  ) \neq 0 \text{  if and only if } n_X = e_k  \text{  for }  k \in \{ 1, 2 \}. 
\end{equation}
Finally, there exists a unique steady state $  \{  \bar f (n, N, n_X ) \}_{ (n , N , n_X  ) \in   \mathbb N^4} $  to equation \eqref{eq:master example} and 
    \begin{equation}
        f(t, (n, N, n_X)  ) \rightharpoonup  \overline f (n, N, n_X) \text { as }  t \to \infty \text{ for every } (n, N, n_X) \in \mathbb N^4.
    \end{equation}
\end{theorem}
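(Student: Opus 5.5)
The plan is to recognize that the channel model is a special case of the general framework and to read the conclusions off Theorem \ref{thm:existence and uniqueness of time dep} and Theorem \ref{thm: convergence to steady state}. The set of species is $\Omega=\{I,O,0,1\}$ with $\Omega_c=\{I,O\}$, and the sources and sinks have the mass action form \eqref{mass action source}--\eqref{mass action sink}. The rates appearing in $\mathcal L_c$ are mass action rates for the reactions $\rho_1,\rho_2$ and their reverses.

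\textbf{Step 1: the hypotheses of the general theorems.} The reaction set is conservative, since $m=(1,2,1,1)$, or after reordering any strictly positive vector orthogonal to $S_{\rho_1},S_{\rho_2}$, lies in $\mathcal M_+\cap\mathbb R^4_+$. I will check explicitly that $m^TS_{\rho_i}=0$ in the coordinates used for $\mathcal L_c$. The moment assumption $\sum (N+n+1)f_0<\infty$ is exactly the condition $\sum|n|f_0<\infty$ of Theorem \ref{thm:existence of time dep}, because on the support of $f_0$ the channel coordinates satisfy $|n_X|=1$. Existence and uniqueness of the time dependent solution then follow from Theorem \ref{thm:existence and uniqueness of time dep}. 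Existence and uniqueness of the steady state, together with $\ast$-weak convergence (which implies pointwise convergence at every $(n,N,n_X)$), follow from Theorem \ref{thm: convergence to steady state}.

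\textbf{Step 2: the property \eqref{only one channel}.} I will use the conserved quantity $\varphi(n,N,n_X)=\chi(n_X(1)+n_X(2))$. Every reaction vector has channel components summing to zero ($-e_1+e_2$ or $e_1-e_2$), and the sources and sinks do not touch $n_X$, so $\mathcal L[\chi(|n_X|)]=0$. Since $\chi(|n_X|)$ is not compactly supported, I will argue at the truncated level, where $f_N(t,\cdot)$ has compact support by Proposition \ref{prop:existence truncated}. This gives $\sum_{|n_X|\neq 1} f_N(t,\cdot)=\sum_{|n_X|\neq1}f_0=0$, and the identity passes to the limit by the pointwise convergence in Step 1 of the proof of Theorem \ref{thm:existence of time dep}. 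By uniqueness this limit is the solution. Alternatively, at the level of the equation, the set $\{|n_X|=1\}$ is invariant because all jumps preserve $|n_X|$. Since $n_X\in\mathbb N^2$ with $|n_X|=1$, we get $n_X\in\{e_1,e_2\}$. The steady state is the limit of $f(t)$, so it also has this property.

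\textbf{Main obstacle.} The main obstacle is bookkeeping rather than analysis. The general theorems are posed on all of $\mathbb N^4$, with uniqueness of the steady state among all of $\mathcal P(\mathbb N^4)$. However, $|n_X|$ is conserved, so there are different steady states on different sectors $\{|n_X|=j\}$, and this appears to conflict with uniqueness. I will handle this by restricting the whole framework to the invariant sector $\{|n_X|=1\}\cong\mathbb N^2\times\{e_1,e_2\}$. I will rerun the arguments there: the dual problem, the subsolution of Proposition \ref{prop:subsolution dual} built from $|M(n)|$, and the strong maximum principle giving convergence of $\psi$ to a constant. The needed irreducibility inside the sector (any state reaches any other via sources, sinks and channel reactions) ensures the dual solution converges to a single constant there. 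The steady state in the theorem is then understood as the unique one supported on this sector, which is consistent with the statement's constraint on $f_0$.
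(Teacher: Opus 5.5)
Your treatment of the time-dependent solution and of the vanishing of $f$ off $\{|n_X|=1\}$ matches the paper. The paper cites Theorems \ref{thm:existence and uniqueness of time dep} and \ref{thm: convergence to steady state} and uses conservation of the channel number. For the steady state you depart from the paper, and the departure is warranted.

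The paper applies Theorem \ref{thm: convergence to steady state} on all of $\mathcal P(\mathbb N^4)$, and that cannot be right. The quantity $n_X(1)+n_X(2)$ is conserved by the reactions and untouched by the sources and sinks, so each sector $\{|n_X|=j\}$ carries its own steady state. For example, with $j=0$, all mass-action channel rates vanish (reading $\mathcal L_c$ with the factors $n_X(1)$, $n_X(2)$, as you do). Then $\mathrm{Poi}(\alpha_1/\beta_1)\otimes\mathrm{Poi}(\alpha_2/\beta_2)\otimes\delta_{n_X=0}$ is stationary.

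In the general proof, this is exactly where irreducibility is used without being stated. The strong maximum principle step giving $H_1(\bar t+1,\cdot)>0$ on $B_{\bar R}$ needs every state of $B_{\bar R}$ to reach $n_0$. So your reading is the only true version: uniqueness among steady states supported on the sector, equivalently for the reduced equation. You should drop the last sentence of your Step 1, which still derives steady-state uniqueness directly from the general theorem.

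What the restriction costs is that you must re-verify the dual argument on the sector instead of citing it. When you do, note that \eqref{invariant region} and the analogous step in Proposition \ref{prop:subsolution dual} use $\sum_{k\in\Omega_c}\beta_k n_k\, m^Te_k\ge\delta\, m^Tn$. That inequality fails here because the channel species are not in $\Omega_c$. On the sector, however, the channel part of $m^Tn$ is at most $2$, so you only lose an additive constant.

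A shorter route on the sector is standard Markov chain theory:
\begin{itemize}
\item The chain is irreducible: empty the channel via $\rho_2$, drain with the sinks, then refill with the sources and $\rho_1$.
\item $V=n+N$ satisfies $\mathcal L V\le \alpha_1+\alpha_2+K_{-\rho_1}+K_{\rho_2}-\min(\beta_1,\beta_2)V$.
\item This gives positive recurrence, hence a unique invariant law and convergence from any initial datum.
\end{itemize}

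Finally, both you and the paper prove only that $f$ vanishes off the sector. The converse direction of \eqref{only one channel} also needs a line. It follows from $f(t,x)\ge e^{-q(x)t}f_0(x)>0$, where $q(x)$ is the total jump rate out of $x$.
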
  
\begin{proof}
The chemical reactions $ S_{\rho_1 }$ and $S_{ \rho_2 } $ are conservative. As a consequence we can apply the results of Theorem \ref{thm:existence and uniqueness of time dep} and Theorem \ref{thm: convergence to steady state}. 
The fact that \eqref{only one channel}  holds is a consequence of the conservation of the number of channels, i.e. of the  fact that for every $ t >0 $ and every test function $ \varphi \in c_{00}(\mathbb N^3)  $ it holds that 
\[ 
   \sum_{ n \in \mathbb N  } \sum_{ N  \in \mathbb N  } \sum_{ n_X \in \mathbb N^2 } \varphi(n, N |n_X|) f(t, n , N, n_X  ) = \sum_{ n \in \mathbb N  } \sum_{ N  \in \mathbb N  } \sum_{ n_X \in \mathbb N^2 } \varphi(n, N |n_X|)  f_0(n , N, n_X  ). 
\]
If we assume now that $ \varphi (n , N , \ell ) = 0 $  if $ \ell =1   $ then we deduce that 
\[ 
   \sum_{ n \in \mathbb N  } \sum_{ N  \in \mathbb N  } \sum_{ \{ n_X \in \mathbb N^2  : | n_X | > 1  \} } \varphi(n, N , |n_X|) f(t, n , N, n_X  ) = \sum_{ n \in \mathbb N  } \sum_{ N  \in \mathbb N  } \sum_{ n_X \in \mathbb N^2 } \varphi(n, N |n_X|)  f_0(n , N, n_X  ) =0 . 
\]
This implies \eqref{only one channel}. 
\end{proof}
Using \eqref{only one channel} the master  equation can be reduced to the following one 
\begin{equation} \label{eq:master example}
\dfrac{d }{ dt } \sum_{ n \in \mathbb N }\sum_{ N \in \mathbb N } \sum_{ k =0  }^2 f(t, n , N , e_k) \varphi (n , N, e_k) = \sum_{ n \in \mathbb N }\sum_{ N \in \mathbb N } \sum_{ k =0 }^2 f(t, n , N , e_k) \mathcal L [ \varphi ] (n , N, e_k)
\end{equation}
where $ \varphi \in c_{00} ( \mathbb N^4 ) $ and where  $ \mathcal L = \mathcal L_c + \mathcal L_s $ and for  $i \neq k, \ i, k \in \{ 0, 1\}  $  we have that $\mathcal L_c $ is given by 
\begin{align*}
\mathcal L_c [\varphi ] (n,N, e_i ) =& K_{ \rho_1 } N [  \varphi (n , N -1 , e_k ) -  \varphi (n , N  , e_i ) ]  + K_{ -\rho_1 } [ \varphi (n , N +1 , e_i ) -  \varphi (n , N  , e_k ) ] \\ 
& + K_{ \rho_2 } n [ \varphi (n+1 , N  , e_i )  -  \varphi (n , N  , e_k ) ]  + K_{ - \rho_2 } [\varphi (n-1 , N  , e_i ) -  \varphi (n , N  , e_k )],
\end{align*}
and $ \mathcal L_s $ is given by 
\begin{align*}
\mathcal L_s [\varphi ] (n,N,  e_k  ) =& \alpha_1 [ \varphi(n +1, N ,  e_k ) - \varphi(n , N ,  e_k ) ] + \alpha_2  [ \varphi(n , N+1 , n_X ) - \varphi(n , N ,  e_k ) ] \\
& + \beta_1 n [ \varphi(n -1, N ,  e_k  ) - \varphi(n , N , e_k ) ] + \beta_2 N [ \varphi(n , N -1 , e_k  ) - \varphi(n , N , e_k  ) ]. 
\end{align*}

Notice that Theorem \ref{thm: example1} states the existence of a stationary distribution to the master equation. Since the system with sources and sinks does not satisfy the detailed balance this stationary solution is a non-equilibrium solution. This means that at the steady state there are non zero fluxes of chemicals crossing the membrane.

\subsection{Model with two types of molecules}  \label{sec:example 2}
The second example we consider is the model of membrane channels described in the book \cite{alberts2022molecular}.
This is a generalization of the example that we study in Section \ref{sec:example 1} to the case in which we have two different types of  molecules in the system.
We denote the two types of molecules $A$ and  $B$. They could be in the exterior of the membrane or in the interior. As indicated above, a possible example is the model of the transport of glucose and Na$^+$  through a channel. 

The interesting feature of the mechanisms of the model that we consider in this section is that it allows to transport molecules against the gradient of concentration. In other words it uses the fact that two molecules must cross the channel at the same time to bring molecules from the region on low concentration to the region of high concentration. 

We now explain the details of the model
We assume that the molecules can go from the outside to the inside of the membrane passing through the channel.   The channel $X$ can be at $8 $ different states. 
\begin{itemize}
    \item State 1:  closed and empty. 
    \item State 2: open towards the exterior and empty. 
    \item  State 3: open towards the exterior and contains a molecule of type $A$. 
    \item   State 4:  open towards the exterior and contains one molecule of type $A$ and one of type $B$. 
    \item  State 5:  $A$ and $B$ inside the channel, which is closed. 
       \item  State 6:  $A$ and $B$ both inside the channel. The channel is open towards the interior. 
       \item State 7: molecule of type $B$  is in the open channel. 
\item  State 8: the channel is empty and open towards the interior.
\end{itemize}

\begin{figure}[H]
\centering
\includegraphics[width=0.8\linewidth]{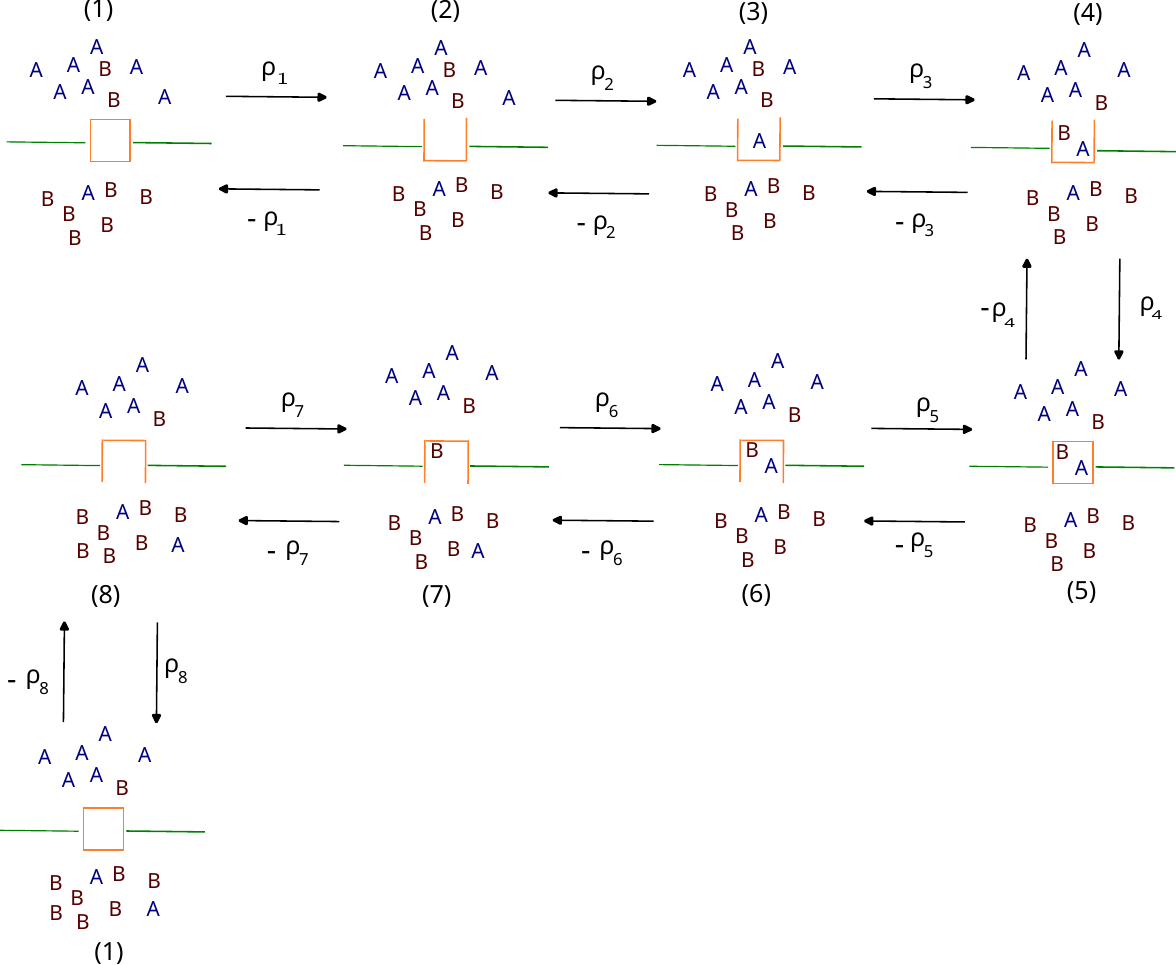} 
\caption{ The channel at the $8 $ different states.  The state of the channel in the figure in the third line and the first state of the first line is the same. However the state of the system in the two cases is different. In the first figure of the first line we have that  $n = (N_A, N_B , n_A , n_B , e_1 ) $ while in the state of the system in the third line is $n = (N_A-1, N_B-1 , n_A+1 , n_B +1, e_1 ) $. 
}
 \label{fig2}
\end{figure} 

The set of the substances is $ \Omega := \{O_A, O_B , I_A, I_B , 1, 2, 3, 4, 5, 6, 7, 8 \} $. Indeed the molecules  $A$ can be of  two types. It is of type $O_A$ if it is outside the cell and of type  $I_A $ if it is inside the cell. 
Similarly, we denote a molecule $B$ that is outside the cell, with $O_B $ and a  molecule $A$ that is inside the cel, with $I_B $.  
We assume that the state of the system is characterized by $n = (N_A, N_B , n_A , n_B , n_X ) $ where $N_A \in\mathbb N $ is the number of molecules of type $A$ outside the membrane, while $n_A \in\mathbb N $ is the number of molecules of type $A$ inside the membrane. Similarly,  $N_B  \in\mathbb N $ is the number of molecules of type $B$ outside the membrane and $n_B \in\mathbb N  $ is the number of molecules of type $B$ inside the membrane. Finally $n_X \in \mathbb N^8 $ is the state of the channel $X$. 

 The reactions that we consider are the following 
\[ 
 (2)\overset{\rho_1}{\underset{-\rho_1}{\rightleftarrows}}  (1)\overset{\rho_2}{\underset{-\rho_2}{\rightleftarrows}}  (8), \quad  (O_A) + (2)  \overset{\rho_3}{\underset{-\rho_3}{\rightleftarrows}} (3), \quad  (3) + (O_B) \overset{\rho_4}{\underset{-\rho_4}{\rightleftarrows}}  (4)  \overset{\rho_5}{\underset{-\rho_5}{\rightleftarrows}} (5)\overset{\rho_6}{\underset{-\rho_6}{\rightleftarrows}}  (6)  , 
 \] 
 \[  (6) \overset{\rho_7}{\underset{-\rho_7}{\rightleftarrows}}  (7) + (I_A) , \quad (7) \overset{\rho_8}{\underset{-\rho_8}{\rightleftarrows}}  (8)+ (I_B). 
\]

It is convenient to write the stoichiometric matrix associated with the chemical reactions
\[
\mathbb S_r  =\begin{bmatrix}
  S_{\rho_k}   
\end{bmatrix}_{k=1}^8 = 
\begin{bmatrix} &1 &-1 & 0 & 0 & 0 &  0 & 0 & 0 \\ 
                               &-1 &0  & -1 & 0 &  0 &  0& 0 & 0 \\ 
                               &0 &0  & 1 & -1 &  0 &  0& 0 & 0 \\
                               &0 &0  & 0 & 1 &  -1 &  0& 0 & 0 \\
                               &0 &0  & 0 & 0 &  1 &  -1& 0 & 0 \\
                               &0 &0  & 0 & 0 &  0 &  1 & -1& 0 \\
                               &0 &0  & 0 & 0 &  0 &  0 & 1 & -1 \\
                               &0 &1  & 0 & 0 &  0 &  0 & 0 & 1 \\
                               &0 &0  &-1 & 0 &  0 &  0 & 0 & 0 \\
                               &0 &0  & 0 & -1&  0 &  0 & 0 & 0 \\
                               &0 &0  & 0 & 0 &  0 &  0 & 1 & 0 \\    
                               &0 &0  & 0 & 0 &  0 &  0 & 0 & 1 \\  
\end{bmatrix}
\]

Notice that $\mathbb S_r w =0 $ implies $w =0$. Hence there are no cycles associate with the system. As a consequence of the Wegscheider criterion detailed balance holds. 

We endow the chemical reactions with  sources and sinks of molecules $A, B ,a, b $, i.e. 
\[ 
\emptyset \rightleftarrows (O_A), \quad \emptyset \rightleftarrows (O_B), \quad \emptyset \rightleftarrows (I_A), \quad \emptyset \rightleftarrows (I_B). 
\]
The stoichiometric matrix of the system with sources and sinks is the following 
\[
\mathbb S  = \begin{bmatrix}  
                               &1 &-1 & 0 & 0 & 0 &  0 & 0 & 0 & 0 & 0 & 0 & 0\\  
                               &-1 &0  & -1 & 0 &  0 &  0& 0 & 0 & 0 & 0 & 0 & 0 \\ 
                               &0 &0  & 1 & -1 &  0 &  0& 0 & 0 & 0 & 0 & 0 & 0 \\
                               &0 &0  & 0 & 1 &  -1 &  0& 0 & 0 & 0 & 0 & 0 & 0\\
                               &0 &0  & 0 & 0 &  1 &  -1& 0 & 0 & 0 & 0  & 0 & 0\\
                               &0 &0  & 0 & 0 &  0 &  1 & -1& 0 & 0 & 0 & 0 & 0\\
                               &0 &0  & 0 & 0 &  0 &  0 & 1 & -1& 0 & 0 & 0 & 0\\
                               &0 &1  & 0 & 0 &  0 &  0 & 0 & 1 & 0 & 0 & 0 & 0\\
                               &0 &0  &-1 & 0 &  0 &  0 & 0 & 0 & 1 & 0 & 0 & 0\\
                               &0 &0  & 0 & -1&  0 &  0 & 0 & 0 & 0 & 1 & 0 & 0\\
                               &0 &0  & 0 & 0 &  0 &  0 & 1 & 0 & 0 & 0 & 1 & 0\\    
                               &0 &0  & 0 & 0 &  0 &  0 & 0 & 1 & 0 & 0 & 0 & 1 \\  
\end{bmatrix}
\]
Notice that the vector $ w=(- 1,-1,1,1,1,1,1,1,1,1,-1,-1)^T \in \mathbb R^{ 12}$ is such that $ \mathbb S w =0$, hence the system has a cycle. 

We can apply Corollary \ref{cor:sources break detailed balance} and deduce that detailed balance holds if and only if the rates of the sources $\alpha_k $ and sinks $ \beta_k$ satisfy the condition
\[
\log \left( \frac{\alpha_k }{\beta_k }\right) =E(k),\quad  k \in \{ a, b, A, B \} 
\]
where $E \in \mathbb R^{8} $ is a solution to
$ \mathbb S_r^T E = v $
where $v \in \mathbb R^8 $ is the vector defined as  $\left( v(k) \right)_{ k =1 }^8=\left( \log \left( \dfrac{K_{-\rho_k} }{ K_{\rho_k} } \right) \right)_{ k =1 }^8 $ where $K_{\rho_k },  K_{-\rho_k }$ are the rates of the reactions $\rho_k  $ and $ - \rho_k $. 

Moreover, notice that the set of chemical reactions associated with $ \mathbb S_r $ are conservative. Indeed the vector $ m^T =(1,1,2,3,3,3,2,1,1,1,1,1)^T$ is a conservation law. 

We now associate to the system with sources and sinks a master equation of the form \eqref{eq:master with sources weak}, i.e. 
\begin{align} \label{eq:master second model of channels}
\sum_{  n \in \mathbb  N^M  } \varphi(n) \partial_t   f (t, n) = \sum_{ n \in \mathbb N ^M }  f (t, n)  \mathcal L_s [\varphi] ( n ) +  \sum_{ n \in \mathbb N ^M }   f (t, n)  \mathcal L_c   [\varphi] (n ) 
\end{align}
where  
\begin{align*}
\mathcal L_c   [\varphi] (n )  : =  \sum_{ k =1}^8  R_{\rho_k}(n) \left[ \varphi(n+ S_{\rho_k}) - \varphi(n)  \right] +   \sum_{ k =1}^8   R_{-\rho_k}(n) \left[ \varphi(n- S_{\rho_k}) - \varphi(n)  \right]  \nonumber \\
\end{align*}
\begin{align*} 
     \mathcal L_s  [\varphi] (n )  :=       \sum_{ j \in \{ A, B , a, b \} }  A_j (n)   \left[ \varphi(n+ e_j ) - \varphi(n)  \right] +    \sum_{ j  \in \{ A, B , a, b \} } B_j (n)   \left[ \varphi(n-  e_j  ) - \varphi(n)  \right].  \nonumber
\end{align*}
\begin{theorem} \label{thm: example1}
 Assume that $f_0 =\{ f_0(n ) \}_{n \in   \mathbb N^{12}    } $ is such that 
\[ 
\sum_{ n \in \mathbb N^{12}  } f_0(n ) =1   \ \text{ and } \ \sum_{ n \in \mathbb N^{12} }   |n|  f_0(n ) < \infty . 
\]
Then there exists a unique sequence of functions  $ \{ f(t,n) \}_{ n  \in   \mathbb N^{12}  }$  such that for every $n \in   \mathbb N^{12}$ the function $t \mapsto f(t,n )$ is continuously differentiable and satisfy \eqref{eq:master second model of channels} for every $ \varphi \in c_{ 00} (\mathbb N^{12} ) $. 
Moreover there exists a unique steady state $  \{  \bar f (n) \}_{ n \in   \mathbb N^{12}} $  to equation \eqref{eq:master example} and 
    \begin{equation}
        f(t, n ) \rightarrow \overline f (n) \text { as }  t \to \infty \text{ for every } n \in \mathbb N^{12}. 
    \end{equation}
\end{theorem}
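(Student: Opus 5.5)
The plan is to reduce the statement to the general results, Theorem \ref{thm:existence and uniqueness of time dep} and Theorem \ref{thm: convergence to steady state}. Those theorems only require two hypotheses. First, the system must be a mass action chemical system with sources and sinks in the sense of Definition \ref{def:chemical}. Second, the set of reactions $\mathcal R=\{S_{\pm\rho_k}\}_{k=1}^8$ must be conservative in the sense of Definition \ref{def:conservative}. The first point is immediate. All eight reactions are bidirectional with mass action rates \eqref{mass action reactions}. Each reaction vector has mixed signs and disjoint reactant and product sets, since no species appears on both sides of any reaction. The sources and sinks of $O_A,O_B,I_A,I_B$ are of the form \eqref{mass action source}--\eqref{mass action sink}. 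Hence $\Omega_c=\{O_A,O_B,I_A,I_B\}\neq\emptyset$.

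The main step is conservativity. I would exhibit a strictly positive $m\in\mathbb R^{12}_+$ with $m^T S_{\rho_k}=0$ for $k=1,\dots,8$. The natural choice assigns weight $1$ to each free molecule ($O_A,O_B,I_A,I_B$). It assigns to each channel state its weight $1$ plus the number of molecules it contains: states $1,2,8$ get $1$, states $3,7$ get $2$, and states $4,5,6$ get $3$. This is the vector $m$ indicated before the statement, up to the ordering of coordinates used in $\mathbb S_r$. I would then check $m^TS_{\rho_k}=0$ reaction by reaction. For example, $\rho_3$ gives $m(O_A)+m(2)=m(3)$, i.e. $1+1=2$, and $\rho_7$ gives $m(6)=m(7)+m(I_A)$, i.e. $3=2+1$. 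The pure channel transitions $\rho_1,\rho_2,\rho_5,\rho_6$ conserve $m$ because they connect states of equal weight. This is the only place where care is needed: the row ordering of the displayed $\mathbb S_r$ must be matched with the labels of $\Omega$. I expect this bookkeeping to be the main, though minor, obstacle. If the displayed vector fails for some row, I would instead take the sum of the two conserved quantities ``number of channels'' and ``total number of $A$ and $B$ molecules''. Both are manifestly nonnegative invariants of every reaction, and their sum is strictly positive on every species. This gives $m\in\mathcal M_+\cap\mathbb R^M_+$.

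With these hypotheses verified and $\sum_n|n|f_0(n)<\infty$ assumed, the argument concludes as follows. Theorem \ref{thm:existence of time dep} together with the uniqueness part of Theorem \ref{thm:existence and uniqueness of time dep} yields a unique weak solution $f(t,\cdot)\in\mathcal P(\mathbb N^{12})$ of \eqref{eq:master second model of channels}, with $t\mapsto f(t,n)$ continuously differentiable. Theorem \ref{thm: convergence to steady state} gives a unique steady state $\overline f$ and $f(t,\cdot)\rightharpoonup\overline f$ in the $\ast$-weak topology. Testing against $\varphi=\mathds 1_{\{n\}}\in c_{00}(\mathbb N^{12})$ turns this into pointwise convergence $f(t,n)\to\overline f(n)$ for every $n$, which is the claimed statement.

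As an optional remark, the same argument as in the proof of Theorem \ref{thm: example1} in Section \ref{sec:example 1} applies here. Testing with functions of the total number of channels $\sum_{j=1}^8 n_X(j)$ shows that this number is preserved. So if initially there is a single channel, the solution stays supported on $|n_X|=1$.
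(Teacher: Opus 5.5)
Your reduction is the paper's proof: it says only that the reactions are conservative and applies Theorem \ref{thm:existence and uniqueness of time dep} and Theorem \ref{thm: convergence to steady state}. Your check of the hypotheses is also right. Your weights are the paper's $m=(1,1,2,3,3,3,2,1,1,1,1,1)$ in the row order $(1,\dots,8,O_A,O_B,I_A,I_B)$ of $\mathbb S_r$.

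The gap is that the uniqueness and convergence claims cannot be obtained this way, because for this network they are false, and your closing remark already shows why. Every reaction $\pm\rho_k$ has exactly one channel state among its reactants and one among its products. The sources and sinks act only on $O_A,O_B,I_A,I_B$. So $c(n):=\sum_{j=1}^8 n_X(j)$ is conserved by the full dynamics, sources and sinks included, and the chain splits into closed classes $\{c(n)=c\}$, $c\in\mathbb N$. On $\{c(n)=0\}$ no reaction can fire, since every reactant set contains a channel state. Hence
\[
\bar f^{(0)}(n)=\mathds{1}_{\{n_X=0\}}\prod_{k\in\{O_A,O_B,I_A,I_B\}}e^{-\alpha_k/\beta_k}\frac{(\alpha_k/\beta_k)^{n_k}}{n_k!}
\]
is a steady state. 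A solution with $f_0$ supported on $\{c(n)=1\}$, however, has $f(t,n)=0$ whenever $n_X=0$, so it cannot converge pointwise to $\bar f^{(0)}$. It in fact converges to a different steady state living on $\{c(n)=1\}$. So neither a unique steady state nor convergence of all solutions to one $\bar f$ holds, even though all stated hypotheses of Theorem \ref{thm: convergence to steady state} are satisfied. Citing that theorem as a black box is therefore not a valid step.

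The general result silently needs that no nonzero conserved quantity of $\mathcal R$ vanishes on $\Omega_c$, i.e.\ that the sinks dissipate all of $|M(n)|$. In the proof of Proposition \ref{prop:subsolution dual}, the sink term $\sum_{k\in\Omega_c}\beta_k n_k$ is compared with $\xi=\varepsilon|M(n)|$. That comparison fails when $|M(n)|$ contains the channel number, and the claimed strong maximum principle fails for a reducible chain. Step 3 of the proof of Theorem \ref{thm:existence of time dep} has the same defect: it uses $\sum_{k\in\Omega_c}\beta_k n_k m(k)\ge\delta\, m^Tn$, which is false here. That part is repairable, since $\sum_{j=1}^8 m(j)n_X(j)\le 3c(n)$ and $\sum_n c(n)f(t,n)$ is constant, so the moment bound survives.

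A correct statement fixes the channel number, e.g.\ $f_0$ supported on $\{c(n)=1\}$ as in Section \ref{sec:example 1}. On each class $\{c(n)=c\}$ the chain is irreducible: the channel states form the cycle $1-2-\cdots-8-1$, and sources supply whatever free molecules a transition needs. Set $V=n_{O_A}+n_{O_B}+n_{I_A}+n_{I_B}$. The only reactions that release free molecules are $-\rho_3,-\rho_4,\rho_7,\rho_8$, and their rates are at most $c$ times their constants. Hence $(\mathcal L_c+\mathcal L_s)[V]\le \sum_k\alpha_k+c\,(K_{-\rho_3}+K_{-\rho_4}+K_{\rho_7}+K_{\rho_8})-\min_k\beta_k\,V$. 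Foster--Lyapunov theory then gives a unique steady state $\bar f^{(c)}$ on each class. For general $f_0$, the steady states are the mixtures $\sum_c p_c\bar f^{(c)}$, and $f(t,\cdot)\to\sum_c p_c\bar f^{(c)}$ with $p_c=\sum_{c(n)=c}f_0(n)$.
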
  
\begin{proof}
    Since the chemical reactions $\{ S_{ \rho_k } \}_{ k =1}^8  $ are conservative we can apply  Theorem \ref{thm:existence and uniqueness of time dep} and Theorem \ref{thm: convergence to steady state}. 
\end{proof}

If we assume that initially we have only one channel, more precisely, if we assume that
\[
f_0(n) \neq  0 \text{ if and only if } n = (N_A, n_A, N_B, n_B , n_X) \text{ with } n_X =  e_k  \quad k \in \{ 1, \dots, 8 \}  
\]
then, the conservation of the number of channels implies that the number of channels will be equal to $1$ for all times, i.e. 
\[
f(t,n) \neq  0 \text{ if and only if } n = (N_A, n_A, N_B, n_B , n_X) \text{ with } n_X =  e_k  \quad k \in \{ 1, \dots, 8 \} .
\]

\section{Conclusions and open problems}
In this paper we prove the existence of stationary solution to the master equation of conservative chemical systems with sources and sinks. 
We prove that these stationary solutions are also attractive. 
Typically, these chemical systems do not satisfy the detailed balance property or the complex balance property. The stationary solution is therefore a non-equilibrium solution and cannot be written explicitly as in \eqref{steady state when detailed balance holds}. 
In particular, even when the system is at a stationary state there is a positive probability of having  fluxes of chemicals in the system. 

A relevant question that we will address in a follow-up paper is the  analysis of some limit cases, in which the concentration of some of the substances in the chemical system is very high. 
As in \cite{ball2006asymptotic,remlein2025chemostat}, in this limit case we expect to obtain hybrid deterministic and stochastic dynamics. 
To study these limiting behaviours allows to deduce a precise mathematical definition of reservoirs of chemicals for stochastic systems. 

\bigskip 

\textbf{Acknowledgements.} 
The authors gratefully acknowledge the support of the grant CRC 1720 “Analysis of Criticality: from Complex Phenomena to Models and Estimates” (Project-ID 539309657) of the University of Bonn funded through the Deutsche Forschungsgemeinschaft (DFG, German Research Foundation)  and Germany’s Excellence Strategy-EXC-2047/2–390685813.
The funders had no role in study design, analysis, decision to publish, or preparation of the manuscript.

\bibliographystyle{siam}

\bibliography{References}

\end{document}